\documentclass[a4paper,reqno,12pt]{amsart}
\usepackage[utf8]{inputenc}

\usepackage[T1]{fontenc}
\usepackage{lmodern}
\usepackage[english]{babel}
\usepackage{amsmath,a4wide} 
\allowdisplaybreaks
\usepackage{xfrac}
\usepackage{esint}
\usepackage{stmaryrd,mathrsfs,bm,amsthm,mathtools,yfonts,amssymb,color,braket,booktabs,graphicx,graphics,amsfonts}
\usepackage{latexsym,microtype,indentfirst}
\usepackage{hyperref}
\hypersetup{colorlinks=true,
	linkcolor=blue,
	citecolor=blue,
	urlcolor=black,
	linktoc=all}
    \usepackage{cleveref}
\usepackage{xcolor}
\usepackage{courier}
\usepackage{constants}
\usepackage{comment}
\usepackage{multicol}
\usepackage[dvipsnames]{xcolor}

\usepackage{scalerel}[2016-12-29]
\def\stretchint#1{\vcenter{\hbox{\stretchto[440]{\displaystyle\int}{#1}}}}

\usepackage{todonotes}
\usepackage{cancel}
\usepackage{color,soul}
\usepackage[dvipsnames]{xcolor}
\usepackage{mathtools}
\usepackage{subcaption}
\usepackage{aliascnt} 

\newcommand{\Na}{\mathbb{N}} 
\newcommand{\R}{\mathbb{R}} 

\newcommand{\eps}{\varepsilon} 
\newcommand{\spt}{\mathrm{spt}} 
\newcommand{\Lip}{\mathrm{Lip}} 
\newcommand{\Ha}{\mathcal{H}} 
\newcommand{\Leb}{\mathcal{L}} 
\newcommand{\pa}{\partial} 
\newcommand{\Int}{{\rm int}\,} 

\newcommand{\mres}{\mathbin{\vrule height 1.6ex depth 0pt width 
0.13ex\vrule height 0.13ex depth 0pt width 1.3ex}}

\newcommand{\abs}[1]{{\left|#1\right|}}
\newcommand{\norm}[1]{\left\lVert#1\right\rVert} 
\newcommand{\normpiccolo}[1]{\mathinner{\mathopen{\vstretch{1.21}{\|}} #1 \mathclose{\vstretch{1.21}{\|}}}}

\newcommand{\V}{\mathbf{V}} 
\newcommand{\RV}{\mathbf{RV}}
\newcommand{\IV}{\mathbf{IV}} 
\newcommand{\var}{\mathbf{var}} 
\newcommand{\E}{\mathcal{E}} 
\newcommand{\op}{\mathcal{OP}}
\newcommand{\cA}{\mathcal{A}}
\newcommand{\cB}{\mathcal{B}}
\renewcommand{\d}{\mathrm{d}} 
\newcommand{\bG}{\mathbf{G}} 

\newcommand{\rC}{\mathrm{C}} 

\renewcommand{\d}{\mathrm{d}}

\newcommand{\ssubset}{\subset\joinrel\subset}
\DeclareMathOperator*{\esssup}{ess\,sup}

\newconstantfamily{eps}{
symbol=\varepsilon}
\newconstantfamily{con}{
symbol=c}

\renewcommand{\set}[1]{{\left\{#1\right\}}} 

\newcommand{\pair}[1]
{\left\langle#1\right\rangle}

\newcommand{\define}{\vcentcolon =}

\renewcommand{\l}{\ell} 

\newcommand{\bigint}[1]{\stretchint{6.5ex}_{\mkern-11mu #1}} 

\newcommand{\T}[1]{\mathcal T\left(#1\right)}

\newcommand{\fb}{\mathrm{fb}}
\newcommand{\h}{\mathrm h}

\newcommand{\hepsfb}{\h_\eps^\fb}

\newcommand{\htildeepsfb}{\widetilde \h_\eps^\fb}

\newcommand{\classEvc}{{\bf E}^{\mathrm{vc}}} 
\newcommand{\Deltajvc}{\Delta_j^{\mathrm{vc}}} 

\newcommand{\opn}{\mathcal{OP}^N} 

\newcommand{\interior}[1]{\rm{int}\left( #1 \right)}
\newcommand{\clos}[1]{\mathrm{clos}\left( #1 \right)}
\newcommand{\abspiccolo}[1]{\bigl|#1\bigr|}
\newcommand{\setpiccolo}[1]{\mathinner{\mathopen{\vstretch{1.21}{\{}} #1 \mathclose{\vstretch{1.21}{\}}}}}

\newcommand{\leb}{\mathcal L^{n+1}}
\newcommand{\closedhalf}{\R^{n+1}_{\ge 0}}
\newcommand{\opns}{\mathcal{OPS}^N} 
\newcommand{\virgolette}[1]{``#1’’}
\newcommand{\eltwoloc}{L^2_{\mathrm{loc}}}
\renewcommand{\emptyset}{\varnothing}

\usepackage[margin=1.in]{geometry}

\newtheorem{theorem}{Theorem}[section]
\newtheorem{corollary}[theorem]{Corollary}

\theoremstyle{definition}
\newtheorem*{notation}{Notation}
\newtheorem{example}[theorem]{Example}

\newaliascnt{assumption}{theorem}
\newaliascnt{definition}{theorem}
\newaliascnt{lemma}{theorem}
\newaliascnt{remark}{theorem}
\newaliascnt{proposition}{theorem}
\newtheorem{assumption}[assumption]{Assumption}
\newtheorem{definition}[definition]{Definition}
\newtheorem{lemma}[lemma]{Lemma}
\newtheorem{remark}[remark]{Remark}
\newtheorem{proposition}[proposition]{Proposition}
\aliascntresetthe{assumption}
\aliascntresetthe{definition}
\aliascntresetthe{lemma}
\aliascntresetthe{remark}
\aliascntresetthe{proposition}

\numberwithin{equation}{section}
\numberwithin{subsection}{section}

\title[Free boundary Brakke]{On the existence of canonical multi-phase free boundary Brakke flows: a case study} 

\author[A. Scapin]{Alessandro Scapin}
\address{Dipartimento di Matematica, Universit\`{a} degli Studi di Milano, Via Saldini 50, I-20133 Milano (MI), Italy}
\email{alessandro.scapin1@unimi.it}
\begin{document}
\bibliographystyle{plain}
\begin{abstract}
    We establish the global-in-time existence of a codimension $1$ canonical multi-phase free boundary Brakke flow in the upper halfspace  which is integer rectifiable up boundary, starting from a countably $n$-rectifiable set. Under a suitable uniform density ratio assumption on the initial datum, we show that the free boundary carries no positive mass for some short time.
\end{abstract}
\maketitle

\makeatletter
   \providecommand\@dotsep{5}
   \makeatother
   \relax

\section{Introduction}
A mean curvature flow (abbreviated hereafter as MCF) is a family of surfaces $\set{\Gamma(t)}_{t\ge0}$ moving with normal velocity equal to the mean curvature of $\Gamma(t)$ at each point and time. This is arguably the most fundamental geometric flow involving extrinsic curvature, as it arises as the gradient flow of the area functional, making it some sort of geometric analogue of the heat equation. The initial value problem for the MCF with a smooth closed initial datum $\Gamma_0$ is locally well-posed in time, until the appearance of singularities. A large number of generalized solutions past singularities have been proposed since the 1970s: as a non-exhaustive list, we mention the viscosity solutions and the associated level set flows \cite{ChenGigaGoto, EvansSpruck}, Brakke flows \cite{Brakke, KimTonegawa}, BV solutions \cite{LuckhausSturzenhecker} and $L^2$ flows \cite{MugnaiRoger}. In the present paper, we  focus on the Brakke flow. 

Besides the boundaryless MCF, it is natural to study the MCF in a given domain with boundary conditions, such as Dirichlet or Neumann. The Brakke flow with Dirichlet boundary conditions has been studied by Stuvard-Tonegawa \cite{STDirichlet}, whereas we are interested here in the study of the \emph{free boundary} MCF, namely a family of surfaces moving by mean curvature and attaching to the boundary of the domain orthogonally, at least in a weak measure-theoretic sense. The aim of the present paper is to establish the global-in-time existence of a multi-phase free boundary canonical Brakke flow in the upper halfspace starting from a rectifiable initial datum. By multi-phase we mean that the evolving surfaces, for any given time, are the boundaries of finitely many open sets, possibly empty, $\leb$-partitioning the upper halfspace $\R^{n+1}_+$ (henceforth referred to as \textit{grains}). The attribute ``canonical’’ refers instead to a notion introduced by Stuvard-Tonegawa \cite{STCanonical}, where the evolution of the surfaces is coupled with the evolution of the grains, yielding a BV-type relation. In particular, this notion prevents the possibility of a \emph{sudden vanishing} of the flow, which Brakke's inequality alone would allow. These solutions are occasionally referred to as varifold-BV solutions (see $\mbox{e.g.} \ $\cite{WSUniq}). 

The free boundary MCF with a smooth, compact, immersed hypersurface initial datum was originally studied by Stahl \cite{Stahl}, until the appearance of singularities. A weak notion of MCF with generalized 90-degree angle condition was then developed in the context of the level set solutions by Sato \cite{SatoLevelSet} and Giga-Sato \cite{GigaSato}. The first notion of free boundary Brakke flow was introduced by Mizuno-Tonegawa \cite{MizunoTonegawa} as the limit of solutions to the parabolic Allen-Cahn equation with Neumann boundary conditions in a strictly convex domain; the convexity assumption was then removed by Kagaya \cite{Kagaya}. Later, Edelen \cite{Edelen} studied many properties of free boundary Brakke flows, such as the compactness of the class, existence of tangent flows and White-type local regularity, and proved an existence result in any codimension by means of the elliptic regularization scheme proposed by Ilmanen \cite{IlmanenElliptic}. Concerning BV solutions, Hensel-Laux \cite{HenselLauxBVAngle} proved the existence of a BV flow with general contact angle (in particular of $90$-degree) in the framework of Allen-Cahn equation, conditional on the assumption of convergence of the energy in the iteration. All the aforementioned approaches yield a two-phase flow; the main advantage of our result is that it produces \textit{multi-phase} solutions to the free boundary Brakke flow that are integer rectifiable up to the boundary hyperplane $H_0$. The analysis of the corresponding problem in general domains is beyond the scope of the present study and it will be addressed in future works.

Additionally, we show that under suitable density ratio assumption on the initial datum, our solution has zero mass on $H_0$ for some short time, providing, to the best of our knowledge, a first partial result in this direction. We will further comment on the possibility of concentration of mass on the free boundary, which is arguably one of the most interesting and challenging problems regarding free boundary Brakke flows, after explaining the general construction of the flow. See \autoref{section: concentration of mass}.

Though somewhat technical, in order to clarify the setting of the problem at this point and to state our main results, we introduce the assumptions on the initial surface $\Gamma_0$.
\begin{assumption} \label{ass:main}
Integers $n\geq 1$ and $N\geq 2$ are fixed. Let 
\begin{gather*}
    H_0\define \set{x=(x_1,\ldots,x_{n+1})\in \R^{n+1} \ \colon \ x_{n+1}=0}\, ,\\
    \R^{n+1}_+\define \set{x=(x_1,\ldots,x_{n+1})\in \R^{n+1} \ \colon \ x_{n+1}>0}\, .
\end{gather*}
Consider the following set of assumptions:
\begin{itemize}
\item[(A1)] $\Gamma_0\subset \R^{n+1}_+$ is a relatively closed, countably $n$-rectifiable set with finite $n$-dimensional Hausdorff measure \footnote{We can actually assume that the surface measure of $\Gamma_0$ is infinite, as long as it grows at most exponentially at infinity; see \autoref{appendix: weight}. }; 
\item[(A2)] $E_{0,1},E_{0,2},\ldots,E_{0,N}$ are non-empty, open and mutually disjoint subsets of $\R^{n+1}_+$, such that $\R^{n+1}_+\setminus\Gamma_0=\bigcup_{i=1}^N E_{0,i}$\, ;
\smallskip
\item[(A3)] The sets 
\begin{equation*}
    B_{0,i}=\set{x\in H_0 \, \colon \, \exists \,  r=r(x)\, \colon \,  B_{r}(x)\cap \R^{n+1}_+ \subseteq E_{0,i} }
\end{equation*}
satisfy 
\begin{equation*}
    \Ha^n\left(H_0 \setminus \bigcup_{i=1}^N B_{0,i}\right)=0\, .
\end{equation*}
\end{itemize}
\end{assumption}
The main results of the present paper can then be roughly stated as follows.
\begin{theorem}
\label{theorem: main thm introduction}
    Under \autoref{ass:main}, there exists a free boundary Brakke flow starting from $\Gamma_0$, consisting of varifolds $\set{V_t}_{t\ge 0}$ which are integer rectifiable up to the boundary $H_0$ for $\mbox{a.e.} \ t$. Furthermore, for every $i=1,\ldots,N$, there exists a flow of grains $\set{E_i(t)}_{t\ge 0}$ starting from $E_{0,i}$, evolving under a generalized free boundary BV law with the same velocity.
\end{theorem}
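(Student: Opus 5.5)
The plan is to adapt the Kim–Tonegawa time-discrete approximation scheme, in its canonical version of Stuvard–Tonegawa, to the free boundary setting by a reflection argument across $H_0$. We extend $\Gamma_0$ and the grains $E_{0,i}$ evenly with respect to $H_0$: we take the reflection $\rho(x_1,\ldots,x_{n+1})=(x_1,\ldots,-x_{n+1})$ and set $\tilde\Gamma_0=\clos{\Gamma_0\cup\rho(\Gamma_0)}$, and $\tilde E_{0,i}=E_{0,i}\cup\rho(E_{0,i})\cup \Int B_{0,i}$, with the interior taken relative to $H_0$. Assumption (A3) guarantees that $\R^{n+1}\setminus\tilde\Gamma_0$ coincides with $\bigcup_i \tilde E_{0,i}$ up to an $\Ha^n$-null subset of $H_0$. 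Hence $\tilde\Gamma_0$ is a countably $n$-rectifiable closed set with finite measure, up to the exponential weight of \autoref{appendix: weight}, and it satisfies the hypotheses of the whole-space existence theorem.

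We then run the approximation scheme while keeping symmetry throughout. At each step we apply the Lipschitz deformation (the open-partition map $\op^N$) and the smoothed mean-curvature push $h_\eps$ with a \emph{reflection-symmetric} mollifier $\Phi_\eps$. We also restrict the admissible Lipschitz maps to those commuting with $\rho$, and use weights symmetric in $x_{n+1}$. We must check that the minimizing step of the area-reducing Lipschitz deformation can still be chosen symmetric. Given a competitor $f$, we build a symmetric one by using $f$ on $\{x_{n+1}\ge 0\}$ and $\rho\circ f\circ\rho$ on the other half. This does not increase the (symmetric) energy once $f$ maps the upper half into the closed upper half, which we can enforce by composing with the folding map $x\mapsto (x',|x_{n+1}|)$, a $1$-Lipschitz map. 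With this, the whole-space estimates of Kim–Tonegawa and Stuvard–Tonegawa go through verbatim: the mass bound, the $L^2$ bound on the approximate mean curvature, the almost-Brakke inequality, and the BV control of the grains. They yield in the limit a canonical Brakke flow $\{\tilde V_t\}$ with grains $\{\tilde E_i(t)\}$ in $\R^{n+1}$, invariant under $\rho$. Integrality for a.e. $t$ follows from the standard rectifiability and integrality part of those proofs, which is local and unaffected by symmetry.

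Finally we restrict to the upper halfspace. We define $V_t=\tilde V_t\mres(\overline{\R^{n+1}_+}\times\bG(n+1,n))$, appropriately halving the part carried by $H_0$ or using the multiplicity of the symmetric varifold, and $E_i(t)=\tilde E_i(t)\cap\R^{n+1}_+$. For test functions $\phi$ with $\nabla\phi\cdot e_{n+1}=0$ on $H_0$, the symmetrized $\phi\circ\rho$ extension is $C^1$. Brakke's inequality for $\tilde V_t$ tested with it then gives the free boundary Brakke inequality for $V_t$. Moreover, by symmetry the generalized mean curvature $\tilde h$ has vanishing normal-to-$H_0$ component on $H_0$, so the first variation of $V_t$ with respect to vector fields tangent to $H_0$ is given by $-\int h\cdot X\,\d\|V_t\|$. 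This is precisely the weak $90$-degree condition. The BV relation $\partial_t\chi_{E_i}=h\cdot\nu\,\|\nabla\chi_{E_i}\|$ restricts similarly to $\R^{n+1}_+$.

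The main obstacle, I expect, is the integral rectifiability \emph{up to} $H_0$ together with the correct treatment of mass sitting on $H_0$. The symmetric limit could concentrate on $H_0$ with odd multiplicity in the doubled picture, or tangentially to $H_0$, which breaks the clean halving. To handle this I would first try to show that $\tilde V_t\mres H_0$ is an integral varifold whose tangent plane is $H_0$ a.e. This uses the integrality of $\tilde V_t$ and the symmetry, since a non-horizontal plane at a point of $H_0$ would force its reflection to appear as well. I would then define the restricted varifold's multiplicity there as half of an even integer, arguing evenness from the fact that the grains on both sides are reflections of each other, so that no interface of a partition can sit on $H_0$ with odd density.
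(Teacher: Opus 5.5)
Your overall plan matches the paper's: symmetrize the data using the sets $B_{0,i}$, run the Kim--Tonegawa/Stuvard--Tonegawa scheme with $\rho$-equivariant Lipschitz maps, a radial mollifier and symmetric test functions, and then restrict to $\closedhalf$. The restriction step is essentially the paper's Section 7. The proposal breaks down, however, where you claim the whole-space estimates, in particular rectifiability and integrality, ``go through verbatim''.

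\textbf{First gap: symmetric competitors near $H_0$.} Once the admissible class is restricted to maps commuting with $\rho$ (the paper's $\sigma$), the hypothesis $\Deltajvc\|\partial\E_j\|\to 0$ only says that \emph{symmetric} competitors cannot reduce area much. But Kim--Tonegawa's Proposition 7.2 (rectifiability) and Lemmas 8.1--8.5 (integrality) rely on specific tailor-made competitors: the radial projection onto $\partial B_r(z)$ and the hole-expanding maps around the set $Y$. These are not symmetric when the relevant region meets $H_0$.

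Your device for symmetrizing an arbitrary competitor $f$ does not repair this. Let $F$ be the folding map and set $\hat f=F\circ f$ on $\{x_{n+1}\ge 0\}$ and $\hat f=\rho\circ F\circ f\circ\rho$ on $\{x_{n+1}<0\}$.
\begin{itemize}
\item $\hat f$ is continuous across $H_0$ only if $f(H_0)\subset H_0$. This fails precisely for the radial projection onto $\partial B_r(z)$ when $B_r(z)\cap H_0\neq\emptyset$, so $\hat f$ is not even Lipschitz.
\item Composing with $F$ can make the interiors of different image grains overlap, violating admissibility (a).
\item Composing with $F$ can also create new boundary on $H_0$.
\item No comparison of $\|\partial\hat f_\star\E\|$ with $\|\partial f_\star\E\|$ is established, let alone conditions (b)--(c) of $\classEvc$ for every $\varphi\in\cA_j$.
\end{itemize}
The paper instead builds symmetric competitors by hand. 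For rectifiability it uses a radial projection from $x_0$ and $\sigma(x_0)$ onto $\partial B^S_r(z)$ whose rays stop at $H_0$. For integrality it uses a symmetric version of Allard's partition lemma, splitting $Y$ at the first gap so that each piece is either $\sigma$-paired and away from $H_0$, or symmetric with small $\mathrm{diam}\,Y_\pm$. These pieces then feed a symmetric hole-expanding map.

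\textbf{Second gap: even density on $H_0$.} Without evenness, halving the multiplicity on $H_0$ produces half-integers, and $V_t$ is not integral up to $H_0$. Your argument that no partition interface can sit on $H_0$ with odd density does not apply, for three reasons.
\begin{itemize}
\item Limit mass on $H_0$ need not be an interface between distinct limit grains. It can be a hidden boundary with the same grain on both sides, as in the paper's ``eye'' scenario.
\item A symmetric integral varifold can have odd multiplicity on $H_0$; $|H_0|$ itself is an example.
\item The argument presupposes the integrality that the first gap leaves open.
\end{itemize}
In the paper, evenness comes from the approximation. Condition (d) added to $\classEvc$, together with (A3) at $k=0$ and the fact that $f_2$ maps $H_0$ into itself, gives $\Ha^n(H_0\cap\partial\E^S_{j,k})=0$ throughout the iteration. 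So near a point of $H_0$ the approximating sheets come in $\sigma$-pairs. The symmetric sheet-counting with $Y$ of cardinality $2\nu$, where $2\nu$ is the least even integer exceeding $d$, then yields $d\le 2\nu-2$, i.e.\ $d\in 2\Na$. Your scheme contains nothing that stops the Lipschitz step from producing boundary on $H_0$ with multiplicity one.
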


\begin{theorem}
Under the assumptions of \autoref{theorem: main thm introduction}, if we further assume a suitable uniform density ratio condition on $\Gamma_0$, then there exists a time $T_0>0$ such that the corresponding free boundary Brakke flow is unit density for $\mbox{a.e.}\ t\in [0,T_0)$ and the grains evolve under a free boundary BV law (not merely \emph{generalized}). Moreover, $\norm{V_t}(H_0)=0$ for $\mbox{a.e.}\ t\in [0,T_0)$. 
\end{theorem}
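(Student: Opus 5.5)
The strategy is to exploit the flat boundary $H_0$ by even reflection. Let $\rho(x_1,\ldots,x_{n+1})=(x_1,\ldots,-x_{n+1})$ and consider the doubled configuration $\widetilde\Gamma_0=\clos{\Gamma_0\cup\rho(\Gamma_0)}$, together with the $N$ doubled grains $\widetilde E_{0,i}=E_{0,i}\cup\rho(E_{0,i})\cup \Int B_{0,i}$. Assumption (A3) is used exactly here. It ensures that, up to an $\Ha^n$-null subset of $H_0$, the doubled grains partition $\R^{n+1}$ with common boundary $\widetilde\Gamma_0$, and that $\Ha^n(\widetilde\Gamma_0\cap H_0)=0$. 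We would then run the Kim--Tonegawa / Stuvard--Tonegawa time-discrete construction (Lipschitz deformations followed by smoothed mean-curvature steps) on the doubled data, with the one addition of imposing $\rho$-symmetry at every step.

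\textbf{Key steps.}
\begin{itemize}
\item[(1)] Carry out the construction with symmetric ingredients. Since $\rho$ is an isometry, the mollifiers, the Lipschitz deformation maps and the approximate mean curvature can all be chosen $\rho$-equivariant. The symmetric minimisers then keep the density-ratio and energy estimates of Kim--Tonegawa, so each approximate flow is $\rho$-symmetric.
\item[(2)] Pass to the limit. The Brakke inequality, integrality and the canonical BV law for the grains of Stuvard--Tonegawa give a limit flow $\{\widetilde V_t\}$ with grains $\{\widetilde E_i(t)\}$, and symmetry persists in the limit.
\item[(3)] Restrict and check the free boundary conditions. For a test function $\phi\ge0$ whose normal derivative vanishes on $H_0$, its even extension is $C^1$ and admissible in the Brakke inequality for $\widetilde V_t$. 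Restricting to $\closedhalf$ gives the free boundary Brakke inequality. The first variation of $\widetilde V_t$ tested with the even extension of a tangential field yields the generalised orthogonality condition. Integer rectifiability up to $H_0$ is inherited from the whole-space integrality, since the restriction of an integral varifold to a closed set is still integral.
\end{itemize}

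For the second theorem, the assumption is a uniform density ratio bound $\Ha^n(\Gamma_0\cap B_r(x))\le(2-\eta)\,\omega_n r^n$ for $r\le r_0$; this bound also holds for $\widetilde\Gamma_0$ up to constants. Following the short-time unit density argument of Stuvard--Tonegawa, Huisken monotonicity applied to the doubled flow propagates the bound to times $t\in[0,T_0)$. That gives multiplicity one a.e., and therefore a genuine BV law for the grains.

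To get $\norm{V_t}(H_0)=0$, we argue by contradiction. Suppose $\norm{\widetilde V_t}(H_0)>0$ on a positive-measure set of times. At $\Ha^n$-a.e. point of $H_0$ charged by $\widetilde V_t$, the tangent plane would be $H_0$ itself, by rectifiability. The reflected sheets from the two sides coincide there, so the doubled varifold would have density at least $2$ at such points. This contradicts the density bound strictly below $2$ inherited from monotonicity.

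The main obstacle is step (3) combined with this last point. One must verify carefully that symmetry is compatible with the integrality and the canonical BV law. In particular, the reflected copies may create multiplicity-$2$ sheets on $H_0$, and nothing in the construction alone forbids this mass. Handling it is the whole reason the density hypothesis is needed, so it is where the argument is most delicate.
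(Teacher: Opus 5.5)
There is a genuine gap in the unit-density part, at two points. Your overall route is the paper's: run the Stuvard--Tonegawa short-time argument on the doubled flow, then exclude boundary mass by comparing densities on $H_0$.

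\textbf{The density hypothesis.} You assume $\Ha^n(\Gamma_0\cap B_r(x))\le(2-\eta)\,\omega_n r^n$ and say it passes to $\widetilde\Gamma_0$ ``up to constants''. Write $\sigma$ for your reflection $\rho$. The transfer only gives $\Ha^n(\widetilde\Gamma_0\cap B_r(x))=\Ha^n(\Gamma_0\cap B_r(x))+\Ha^n(\Gamma_0\cap B_r(\sigma(x)))\le 2(2-\eta)\,\omega_n r^n$. A bound of $2$ or more is useless here. The Stuvard--Tonegawa/Huisken argument yields multiplicity one precisely because the density ratios of the doubled flow stay strictly below $2$, so that threshold cannot be absorbed into constants. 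It is genuinely lost: a sheet of $\Gamma_0$ parallel to $H_0$ at height $\epsilon\ll r_0$ has ratio about $1$, while its double has ratio about $2$ at every scale between $\epsilon$ and $r_0$. Hence your argument cannot produce a $T_0$ depending only on $n,r_0,\eta,\Ha^n(\Gamma_0)$. Your final contradiction (density $\ge2$ against a bound below $2$) also disappears. The condition must be imposed on the doubled set with threshold below $2$, as in \eqref{e: assumption density ratio}. That means ratio below $1-\delta_0/2$ at points of $H_0$, and no tangential approach of $\Gamma_0$ to $H_0$.

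\textbf{Density at least $2$ on $H_0$.} You claim a point of $H_0$ charged by $\widetilde V_t$ has density at least $2$ because ``the reflected sheets coincide''. This does not follow from symmetry, rectifiability and tangent plane $H_0$: $\var(H_0,1)$ is a symmetric integral varifold with density $1$ on $H_0$. What you need is that the symmetric limit has \emph{even} density $\norm{\widetilde V_t}$-a.e. on $H_0$. That is a property of the scheme, not of symmetry. It comes from keeping $\Ha^n(H_0\cap\partial\E^S_{j,k})=0$ along the whole iteration (condition (d) in the admissible class of deformations). It also needs the sheet-counting of the integrality proof to be run with symmetric sets $Y$ of even cardinality disjoint from $H_0$; see \autoref{theorem: integrality}.

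Your step (3) hides the same point. Testing the Brakke inequality of $\widetilde V_t$ with even $\varphi$ and dividing by $2$ produces $\widetilde V_t\mres\R^{n+1}_+ +\tfrac12\,\widetilde V_t\mres H_0$, not the plain restriction to $\closedhalf$. Integrality of this measure on $H_0$ is exactly even density of $\widetilde V_t$ there, so it is not inherited for free.

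With even density available, the paper's conclusion is immediate. Under the correct hypothesis $\widetilde V_t$ is unit density for a.e. $t<T_0$, so it cannot charge $H_0$. Hence $\norm{V_t}(H_0)=0$, $V_t$ is unit density, and (iv) of \autoref{thm: main theorem precise} gives the genuine BV law. This also inverts your ``main obstacle''. The density hypothesis is what excludes even multiplicity on $H_0$; odd multiplicity there has to be excluded by the construction itself.
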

While it is reasonable to expect that $\norm{V_t}(H_0)=0$ for \emph{every} $t\in[0,T_0)$, a rigorous proof remains elusive.

\subsection{Strategy of the proof and technical assumption (A3)}
The idea for obtaining a multi-phase free boundary Brakke flow is to adapt the approximation scheme by Kim-Tonegawa \cite{KimTonegawa} by suitably modifying the approximate velocity. Indeed, grains should move by the usual (smoothed) mean curvature away from the hyperplane $H_0$, and by its tangential component along $H_0$. If we reflect the initial datum across $H_0$, the (smoothed) mean curvature of the symmetrization naturally satisfies these properties, allowing to start the approximation scheme proposed by \cite{KimTonegawa}. On the other hand, one cannot run the scheme of Kim-Tonegawa as a black box, since the other essential step of the approximation, namely the regularization via (volume controlled) area reducing Lipschitz deformation, might disrupt the symmetry at each stage. We must then further modify the scheme imposing the symmetry of the area reducing Lipschitz deformations, so that the symmetry of the grains along the iteration is enforced. This yields the existence of a symmetric Brakke flow (in the usual sense) starting from the symmetrization of the initial datum $\Gamma_0$, where the \emph{free boundary} Brakke flow is obtained, roughly, as its restriction to the (closed) upper halfspace. Due to the symmetry condition imposed on the Lipschitz deformation, a delicate point is to show that the free boundary Brakke flow is integer rectifiable up to the boundary.

The flatness of the ``barrier’’ plays a pivotal role here, as it enables us to globally reflect anything and avoid seeing errors related to its curvature. Another key technical aspect concerns how to precisely define the reflection of the initial grains. Given the grain $E_{0,i}$, its reflection should be defined as $E_{0,i}^S= E_{0,i} \cup \sigma(E_{0,i}) \cup  \widetilde B_{0,i}$, where $\sigma$ is the reflection map across $H_0$ and $\widetilde B_{0,i}\subset H_0$ should intuitively be the common boundary of $E_{0,i}$ and $\sigma(E_{0,i})$, that is the region on $H_0$ that is effectively wetted by $E_{0,i}$. Since the reflection is only a tool to obtain a tangential velocity, the $\widetilde B_{0,i}$ should be chosen wisely in order to make sure that this process does not lead to any subset (of positive $\Ha^n$ measure) of $H_0$ becoming part of the evolving varifolds. The role of assumption (A3) is precisely to allow us to accomplish this task, by choosing $\widetilde B_{0,i}=B_{0,i}$. We show the following pathological example to explain what may happen if (A3) fails.

\begin{example}
Suppose that the initial datum is given as in \autoref{fig: Cantor}: first, consider any smooth bounded curve having boundary on $H_0$. In the region of $\R^2$ delimited by this curve, take the segment $[0,1]\subset H_0$ and begin the construction of a ``fat Cantor’’ set by removing a segment in the middle. Next, use this removed segment as a basis for a rectangle of some small height. Then, continue the construction of the \virgolette{fat Cantor} set, placing a rectangle on each newly removed region, with a height smaller than the previous one. Moreover, assign the same grain label to all such rectangles. By iterating this procedure and carefully choosing the height of the rectangles, one can achieve finite surface measure, obtaining an open partition that is admissible in $\R^2_+$ in the sense of \cite{KimTonegawa}, as it consists of finitely many (not connected) open sets which are mutually disjoint, whose boundaries have finite area (length) and are countably rectifiable in $\R^2_+$. 

Starting from this initial datum, we cannot define any reasonable symmetrization. Indeed, the points belonging to the \virgolette{fat Cantor} set in $H_0$ cannot be in the interior of the symmetrization; thus, the symmetrization would have positive boundary measure on $H_0$, a scenario we want to avoid.
\begin{figure}
    \centering
    \includegraphics[width=0.6\linewidth]{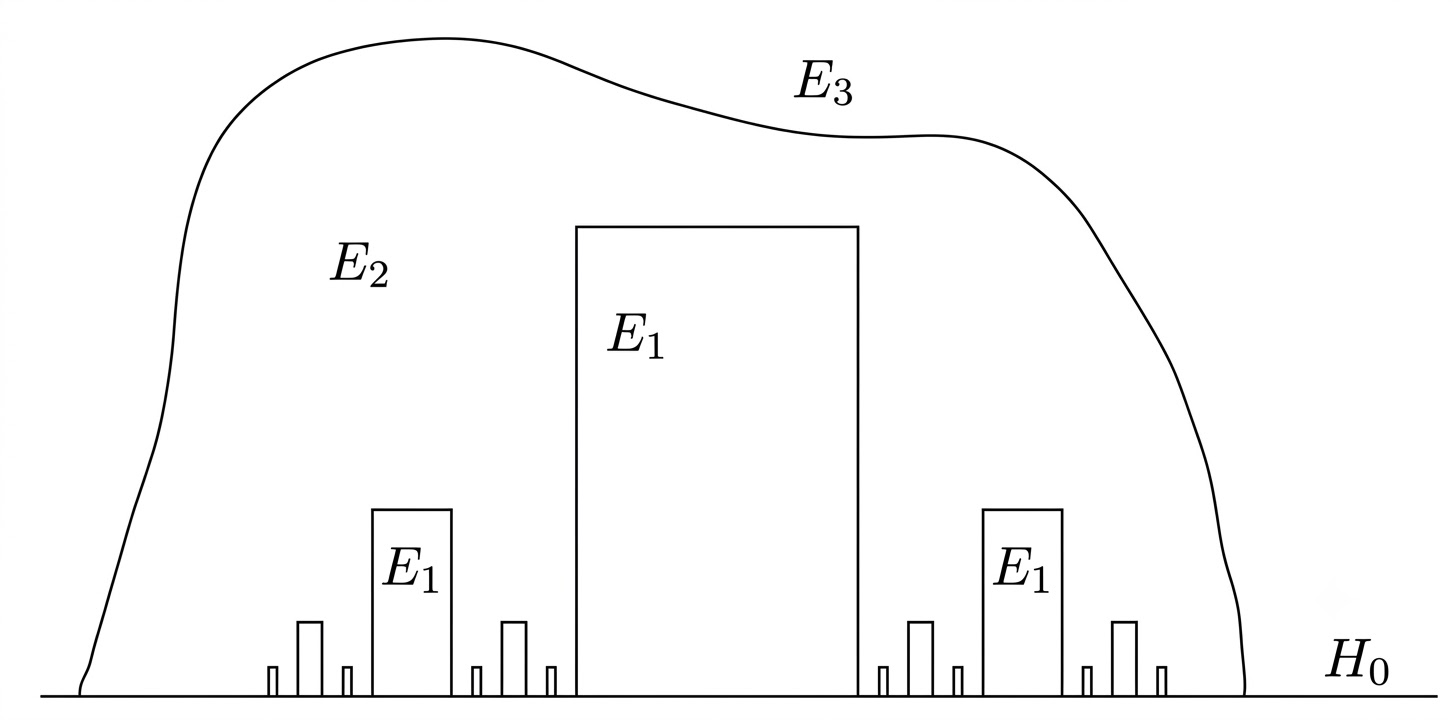}
    \caption{A pathological example}
    \label{fig: Cantor}
\end{figure}
    
\end{example}
Assumption (A3) rules out this possibility, making such an initial datum unacceptable. However, this condition still sounds reasonable, since a point $x\in H_0$ belongs to $H_0 \setminus \bigcup_{i=1}^N B_{0,i}$ if, intuitively, there are multiple grains meeting at $x_0$, and therefore one would expect the set of such points to be of dimension $n-1$.

\subsection{Concentration of mass on the free boundary}
\label{section: concentration of mass}
 The definitions of free boundary varifolds and free boundary Brakke flows allow for the possibility that the free boundary carries positive mass: in fact, forbidding this possibility would prevent the classes from being compact (see, for instance, the example described by Edelen \cite{Edelen}). On the other hand, one could expect that if the initial datum has no mass on the free boundary, this property is preserved over time, possibly under some additional assumptions on the geometry of the domain, such as convexity or mean convexity. This problem is however quite challenging and the possibility of mass approaching the free boundary is actually one of the main reasons we need to restrict our analysis to the halfspace; see \autoref{appendix: non flat domains} for further details. Actually, within the setting of the present paper, we think this mass concentration at the boundary phenomenon cannot be generically excluded. By \cite[Proposition $4.4$]{Edelen}, given any free boundary Brakke flow in the halfspace, its symmetrization is a (boundaryless) Brakke flow in $\R^{n+1}$. Vice versa, in \autoref{section: symmetric Brakke flow} we show that any symmetric Brakke flow in $\R^{n+1}$ is naturally associated to a free boundary Brakke flow in the halfspace. From this perspective the question becomes: can a symmetric Brakke flow concentrate mass on its hyperplane of symmetry after some time? We suspect that the answer to this question might be affirmative, mainly because a hyperplane is a (stable) minimal surface and any MCF is expected to evolve towards some minimal surface. Although this is far from being a proof, we made some simulations using Brakke's surface evolver in $\R^2$ which seem to go in this direction. See \autoref{fig: surface evolver}. The idea is the following: suppose the initial datum is made of a tiny \virgolette{eye} with high curvature, attached to a very large ball, with small mean curvature. We expect the \virgolette{eye} to shrink to a subset of $H_0$ very fast and that the solution carries this region for some time, until the big ball will become very small and the whole solution will shrink to a point (by inclusion principle). At the level of the approximation scheme, if we end up in the scenario shown in the fourth picture of \autoref{fig: surface evolver}, the area-reducing Lipschitz maps would just crush and \virgolette{destroy} that region, since the grains are symmetric and therefore that region would not be part of the reduced boundary of any grain. However, it is not clear to us how to infer that the limit flow actually behaves similarly. In any case, this example should at least show that one cannot expect any uniform-in-time control over the varifold mass around the free boundary. More precisely, for any sufficiently small $\rho>0$, the initial datum (the first picture of \autoref{fig: surface evolver}) has mass of order $\rho$ in a $\rho$--tubular neighborhood of $H_0$, whereas after some time (third picture of \autoref{fig: surface evolver}) the mass in the same $\rho$--tubular neighborhood will be of order $1$.
\begin{figure}[h!]
    \centering
    \begin{subfigure}[b]{0.48\textwidth}
        \centering
        \includegraphics[width=\textwidth]{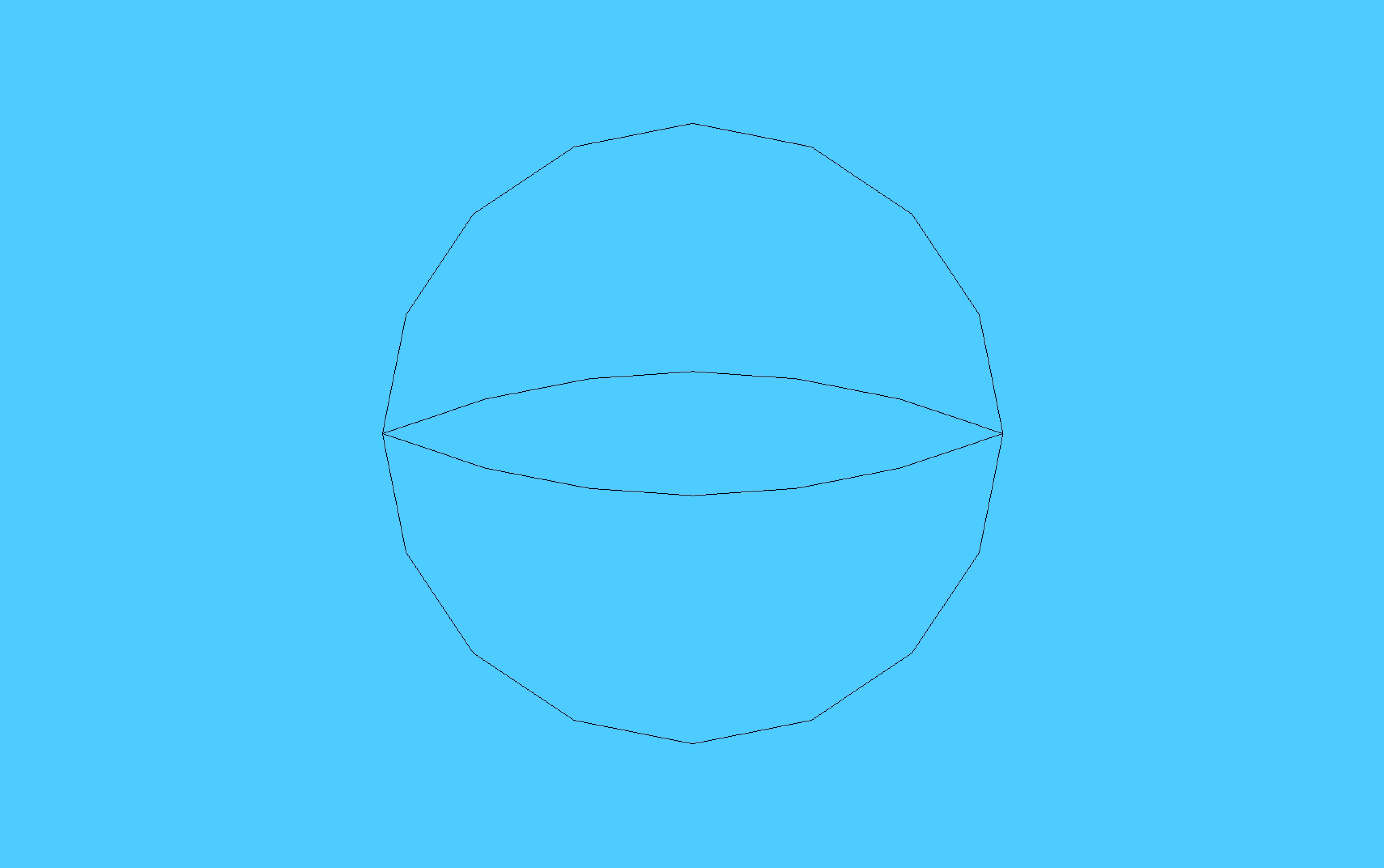}
    \end{subfigure}
    \hfill
    \begin{subfigure}[b]{0.48\textwidth}
        \centering
        \includegraphics[width=\textwidth]{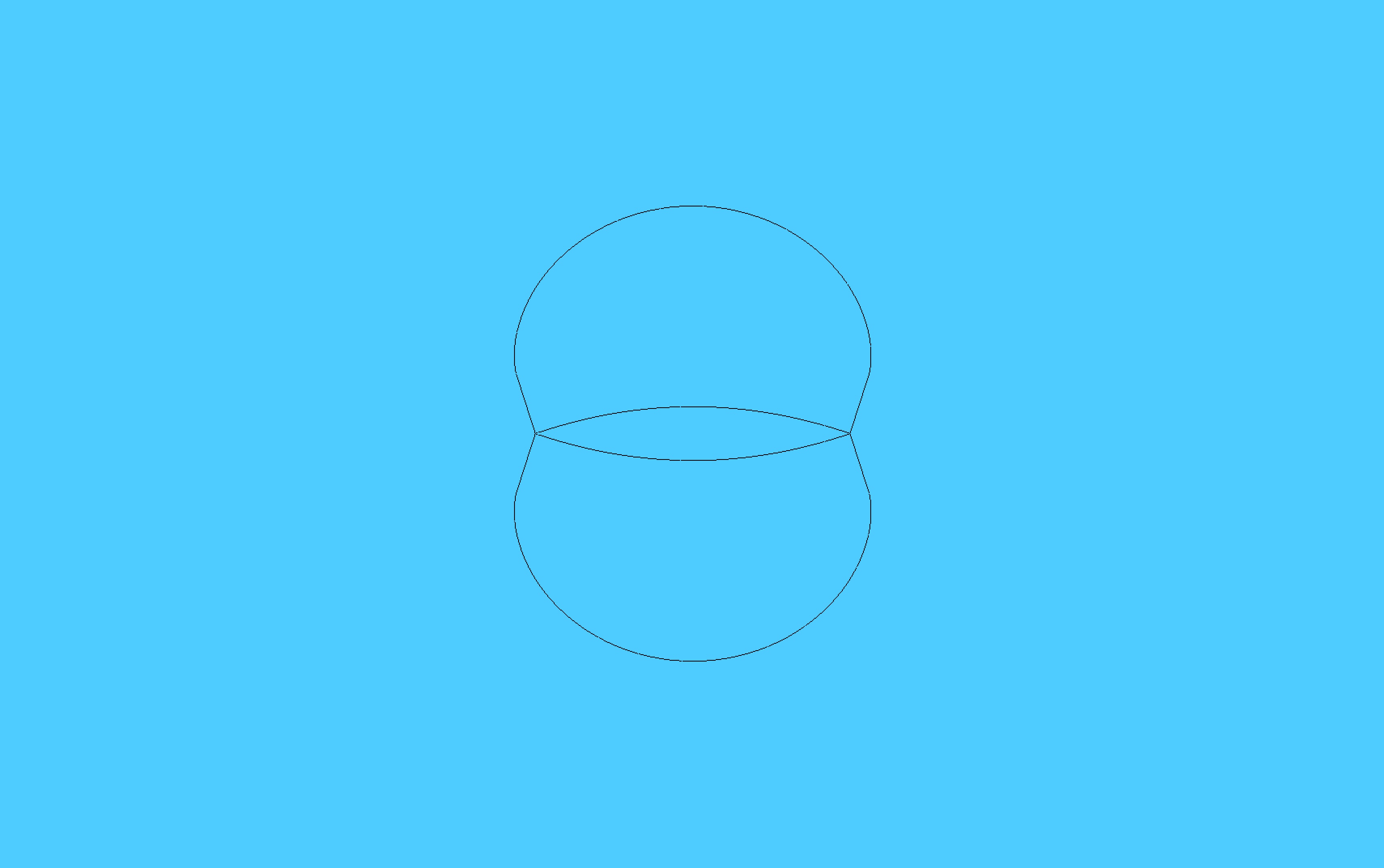}
    \end{subfigure}
\par \vspace{0.3cm}

    \begin{subfigure}[b]{0.48\textwidth}
        \centering
        \includegraphics[width=\textwidth]{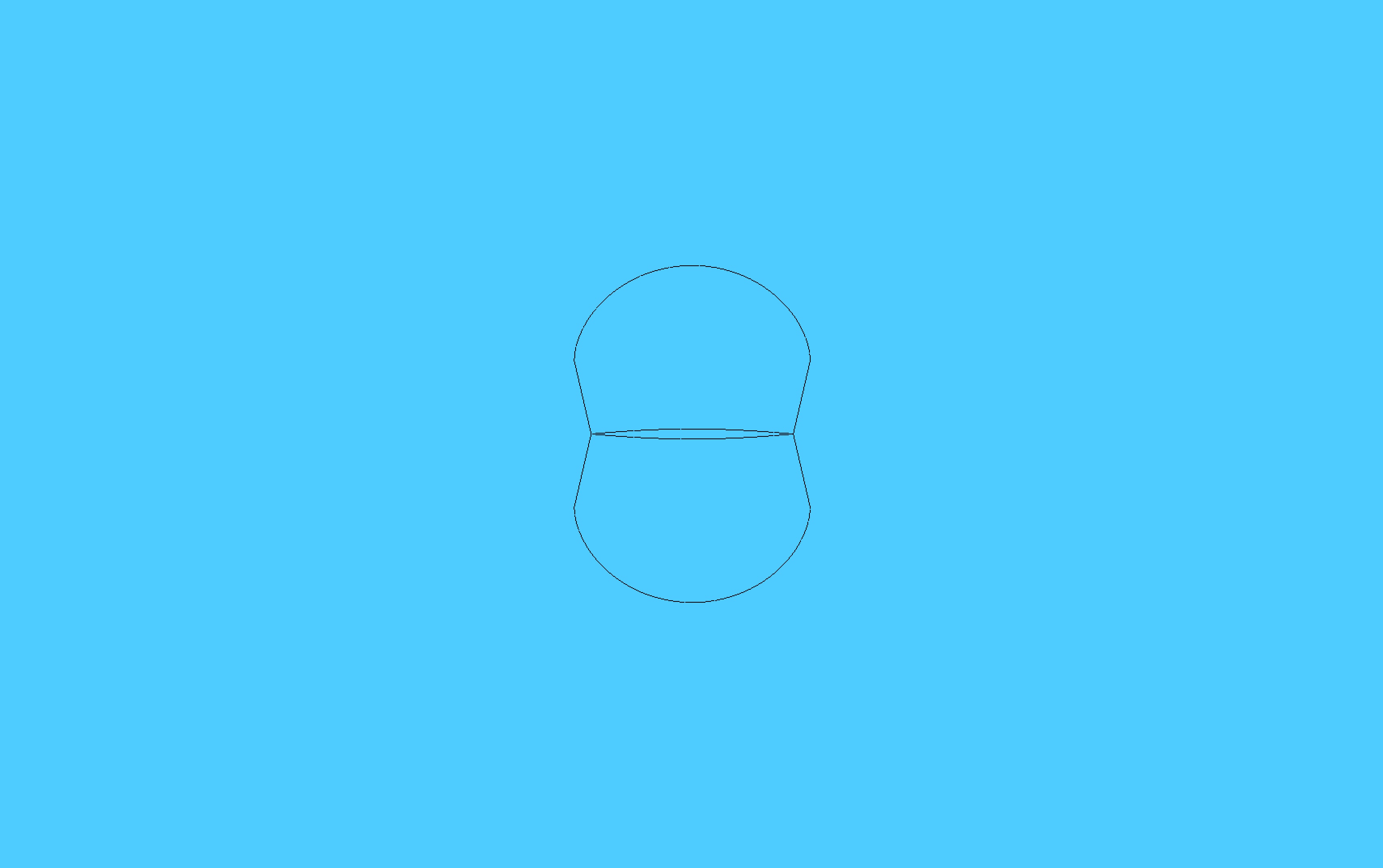}
    \end{subfigure}
    \hfill
    \begin{subfigure}[b]{0.48\textwidth}
        \centering
        \includegraphics[width=\textwidth]{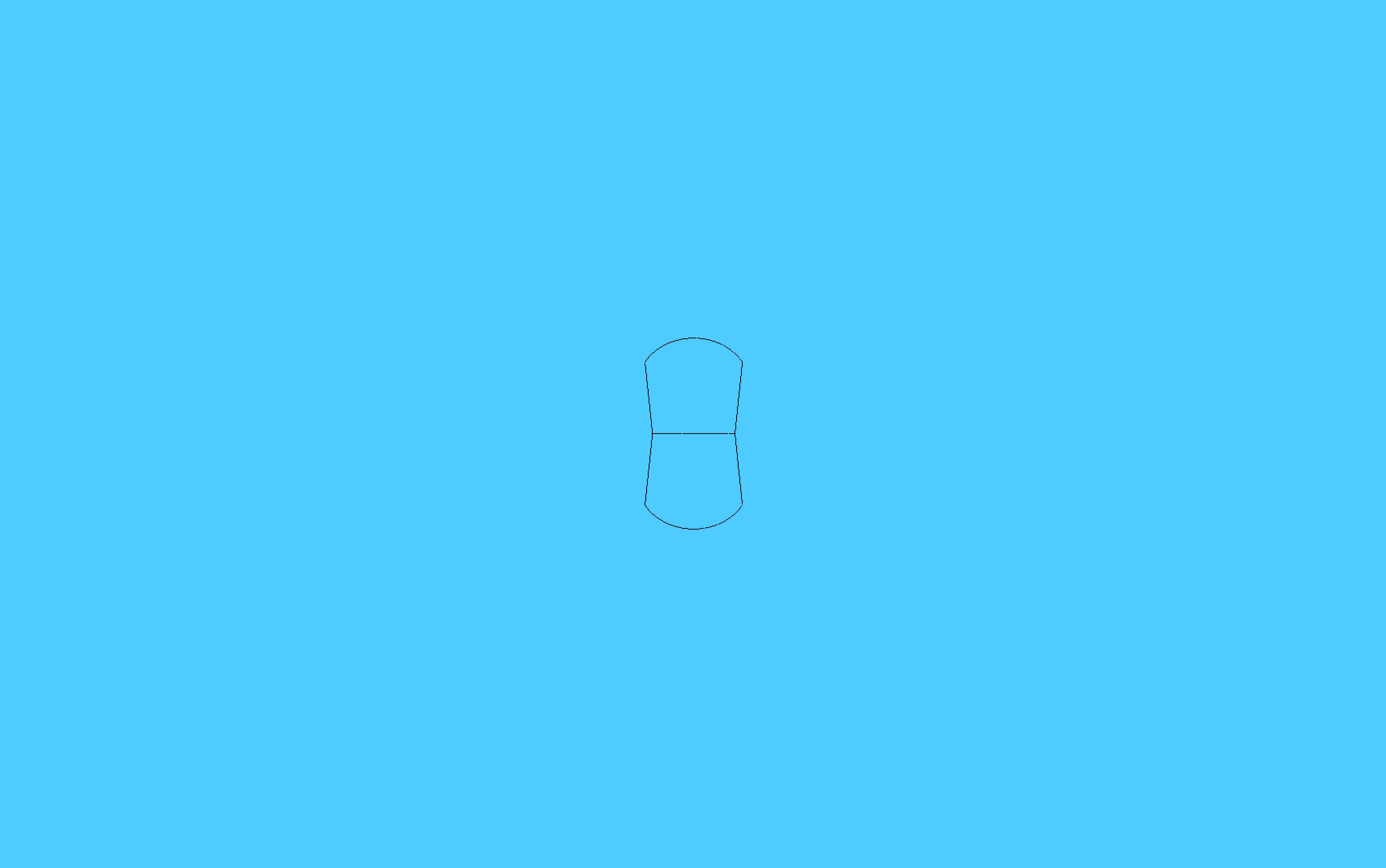}
    \end{subfigure}
    \caption{Evolution of a symmetric network: the \virgolette{eye} region shrinks to a subset of $H_0$ illustrating the potential for mass concentration on the free boundary}
  \label{fig: surface evolver}
    \label{fig:quattro_immagini}
\end{figure}

\section*{Acknowledgments}
The author wishes to express his gratitude to his supervisor Salvatore Stuvard and to Yoshihiro Tonegawa for several useful discussions. This research was carried out during the author's visit to the Institute of Science Tokyo, whose warm hospitality is gratefully acknowledged. This work was supported by the Italian Ministry of University and Research (MUR) through
the FIS 2 project \emph{SiGmA: \virgolette{Singularities in Geometric Analysis:
Minimal Surfaces and Mean Curvature Flows}}, project code FIS-2023-02962 (CUP G53C25000120001). 
\section{Definitions, notations and main results}

Given $x\in \R^{n+1}$ and $r>0$, $U_r(x)$ and $B_r(x)$ denote the open and closed ball of radius $r$ and center $x$. Given $A\subset \R^{n+1}$, $\interior A$ denote the interior of $A$ and $\clos{A}$ or $\overline A$ the closure of $A$. The symbol $\leb$ will denote the Lebesgue measure in $\R^{n+1}$; the symbol $\Ha^k$ will denote the $k$-dimensional Hausdorff measure on $\R^{n+1}$, normalized so that $\Ha^{n+1}=\leb$. Given a measure $\mu$ on $\R^{n+1}$ and a $\mu$-measurable set $\Gamma$, $\mu \mres A$ denotes the restriction of $\mu$ to $\Gamma$.
\subsection{Varifolds}

The symbol $\bG(n+1,k)$ will denote the Grassmannian of (unoriented) $k$-dimensional linear planes in $\R^{n+1}$.  Given $S \in \bG(n+1,k)$, we shall often identify $S$ with the orthogonal projection operator onto it. For any open $U\subseteq \R^{n+1}$, the symbol $\V_k(U)$ will denote the space of $k$-dimensional {\it varifolds} in $U$, namely the space of Radon measures on $\bG_k(U) \define U \times \bG(n+1,k)$ (see \cite{AllardFirstVariation,SimonLectures} for a comprehensive treatment of varifolds). To any given $V \in \V_k(U)$ one associates a Radon measure $\|V\|$ on $U$, called the {\it weight} of $V$, and defined by projecting $V$ onto the first factor in $\bG_k(U)$, explicitly:
\begin{equation*}
\norm V(\varphi) \define  \int_{\bG_k(U)} \varphi(x) \, dV(x,S) \quad \mbox{for every $\varphi \in C_c(U)$}\,.
\end{equation*}
A set $\Gamma \subset \R^{n+1}$ is {\it countably $k$-rectifiable} if it can be covered by countably many Lipschitz images of $\R^k$ into $\R^{n+1}$ up to a $\Ha^k$-negligible set. We say that $\Gamma$ is (locally) {\it $\Ha^k$-rectifiable} if it is $\Ha^k$-measurable, countably $k$-rectifiable, and $\Ha^k\mres\Gamma$ is (locally) finite. If $\Gamma \subset \R^{n+1}$ is locally $\Ha^k$-rectifiable, and $\theta \in L^{1}_{loc}(\Ha^k \mres\Gamma)$ is a positive function on $\Gamma$, then there is a $k$-varifold canonically associated to the pair $(\Gamma,\theta)$, namely the varifold $\var(\Gamma,\theta)$ defined by
\begin{equation} \label{varGammatheta}
\var(\Gamma,\theta)(\varphi) \define \int_\Gamma \varphi(x, T_x \Gamma) \, \theta(x)\, d\Ha^k(x) \quad \mbox{for every } \varphi \in C_c(\bG_k(U))\,,
\end{equation}
where $T_x\Gamma$ denotes the approximate tangent plane to $\Gamma$ at $x$, which exists $\Ha^k$-a.e. on $\Gamma$. Any varifold $V \in \V_k(U)$ admitting a representation as in \eqref{varGammatheta} is said to be \emph{rectifiable}, and the space of rectifiable $k$-varifolds in $U$ is denoted by 
${\bf RV}_k(U)$. If $V = \var(\Gamma,\theta)$ is rectifiable and $\theta$ is integer valued at $\Ha^k$-a.e. $x \in \Gamma$, then we say that $V$ is an \emph{integral} $k$-dimensional varifold in $U$: the corresponding space is denoted $\IV_k(U)$. If $\theta\equiv 1$, $V$ is said to be unit-density. We say that $V\in \V_k(\clos{ U})$ (respectively $\RV(\clos{U})$ or $\IV(\clos{U}$) if $V\in \V_k(\R^{n+1})$ (respectively respectively $\RV(\R^{n+1})$ or $\IV(\R^{n+1})$)  and $\spt\,  V \subseteq \clos{U}$.\\

Given $V\in \V_k(U)$, its first variation is defined as
\begin{equation*}
\delta V(g) = \int_{\bG_k(U)} \nabla g(x) \cdot S \, dV(x,S) \quad \mbox{for every $g \in C^1_c(U;\R^{n+1})$}\,,
\end{equation*}
where, after identifying $S \in \bG(n+1,k)$ with the orthogonal projection operator $\R^{n+1} \to S$,
\[
\nabla g \cdot S = {\rm trace}(\nabla g^T \circ S) = \sum_{i,j=1}^{n+1} S_{ij} \, \frac{\partial g_i}{\partial x_j} = {\rm div}_S\,  g\,.
\]

$\norm{\delta V}$ denotes the total variation of $\delta V$. If the variation $ \delta V$ can be extended to a bounded linear functional on $C_c(U,\R^{n+1})$, we say that $V$ has bounded first variation in $U$. In this case, $\delta V$ is naturally associated with a unique $\R^{n+1}$-valued measure on $U$ by means of the Riesz representation theorem. If such a measure is absolutely continuous with respect to the weight $\norm V$, by the Lebesgue-Radon-Nikod\'ym differentiation theorem, we have
\begin{equation*}
    \delta V (g)=-\int_{\R^{n+1}} \h(V,\cdot) \cdot g \, d\norm V  \quad \mathrm{for \ every \ } g\in C_c(U;\R^{n+1})\, ,
\end{equation*}
for some $\norm V$ measurable and locally $\norm V$-integrable vector field $\h$, called the \emph{generalized mean curvature} of $V$.

\subsection{BV functions and sets of finite perimeter}
Given an open set $U \subseteq \R^{n+1}$, we say that a function $f \in L^1(U)$ has bounded variation in $U$, written $f \in {\rm BV}(U)$, if 
\begin{equation*}
\sup\set{ \int_U f \, {\rm div}\,g \, dx \, \colon \, g \in C^1_c(U;\R^{n+1}) \mbox{ with } \norm{g}_{C^0} \leq 1 } < \infty\,.
\end{equation*}
If $f\in {\rm BV}(U)$, then there exists an $\R^{n+1}$-valued Radon measure on $U$, called measure derivative of $f$ and denoted $\nabla f$, such that
\begin{equation*}
\int_U f\,{\rm div}\,g\, dx = - \int_U g \cdot d\nabla f \qquad \mbox{for all $g \in C^1_c(U;\R^{n+1})$}\,.
\end{equation*}
We say that $f \in {\rm BV}_{{\rm loc}}(U)$ if $f \in {\rm BV}(U')$ for all $U' \ssubset U$.\\

For a set $E \subset \R^{n+1}$, $\chi_E$ is the characteristic (or indicator) function of $E$, defined by $\chi_E(x)=1$ if $x\in E$ and $\chi_E(x)=0$ otherwise. We say that $E$ has locally finite perimeter in $\R^{n+1}$ if $\chi_E \in {\rm BV}_{{\rm loc}}(\R^{n+1})$. When $E$ is a set of locally finite perimeter, then the measure derivative $\nabla \chi_E$ is the associated Gauss-Green measure, and its total variation $\norm{\nabla \chi_E}$ is the perimeter measure; by De Giorgi's structure theorem, $\norm{ \nabla \chi_E} = \Ha^n \mres{\pa^\ast E}$, where $\pa^\ast E$ is the reduced boundary of $E$, and $\nabla\chi_E = - \nu_E\, \norm{\nabla \chi_E} = - \nu_E\, \Ha^n \mres{\pa^\ast E}$, where $\nu_E$ is the outer pointing unit normal vector field to $\pa^\ast E$. We refer to \cite{MaggiSets, AFP} for a complete treatment.

\subsection{Free boundary varifolds}
\begin{definition}
Let $U$ be an open set with boundary $\partial U$ of class $C^2$. We define
\begin{equation*}
 \T{\partial U}\define \set{g\in C_c(\R^{n+1}; \R^{n+1}) \, \colon \, g(x)\cdot \nu_{\partial U} (x)=0\ \mathrm{for \ every }\ x\in \partial U}\, ,
\end{equation*}
where $\nu_{\partial U}$ is the outer unit normal to $\partial U$. We say that a varifold $V\in \V_k(\clos U )$ has {\it free boundary} at $\partial U$ if there exists a $\norm V$-measurable vector field $\h^\fb\in L^1_{\operatorname{loc}}(\norm V; \R^{n+1})$ which is $\norm V$-$\mbox{a.e.}$ tangent to $\partial U$ such that
    \begin{equation*}
    \delta V(g) =-\int_{\overline U} \h^\fb \cdot g \, d\norm V \quad \mathrm{for \ every \ } \,  g\in \T{\partial U} \cap C^1_c(\R^{n+1};\R^{n+1})\, ,
    \end{equation*}
    that is, if 
    \begin{equation}
    \label{e:free boundary varifold with tangent vectors}
    \begin{split}
     &\int_{\bG_k(\overline U)} S\cdot \nabla g \, dV(x,S)=-\int_{\overline U} \h^\fb \cdot g \, d\norm V \\
     & \qquad \mathrm{for \ every \ } \,  g\in \T{\partial U} \cap C^1_c(\R^{n+1};\R^{n+1})\, .
     \end{split}
    \end{equation}   
    We call such a $\h^\fb$ the \emph{generalized free boundary mean curvature} of $V$.
\end{definition}
The condition that $\h^\fb$ is $\norm V$-a.e. tangent to $\partial U$ implies the uniqueness of the free boundary mean curvature whenever it exists. Indeed, otherwise every sum $\h^\fb+K$ with $K$ orthogonal to $\partial U$ and identically $0$ in $U$ would satisfy \eqref{e:free boundary varifold with tangent vectors}.

Notice that this definition allows for the possibility that $\norm V(\partial U) >0$, even in the smooth case. Otherwise, this class would not be compact. This allows some counterintuitive phenomena: for instance, $S^1$ counted with constant multiplicity is a varifold having free boundary at itself, with $\h^\fb=0$.

We refer to \cite{DeMasiRectifiability} for some motivation for this weak notion of orthogonality and for further properties.

\subsection{(Standard) Brakke flows}
\begin{definition}[Brakke flow]
Let $0 < T \leq \infty$, and let $U \subset \R^{n+1}$ be an open set. A \emph{$k$-dimensional Brakke flow} in $U$ is a one-parameter family of varifolds $\set{V_t}_{t\in\left[0,T\right)}$ in $U$ such that all of the following hold: 
\begin{enumerate}
\item[(a)]
For a.e.~$t\in \left[0,T\right)$, $V_t\in{\bf IV}_k(U)$;
\item[(b)] For a.e.~$t\in \left[0,T\right)$, $\delta V_t$ is locally bounded and absolutely continuous with respect to $\norm{V_t}$;
\item[(c)] The generalized mean curvature $\h(\cdot,V_t)$ (which exists for a.e.~$t$ by (b)) satisfies $\h(\cdot,V_t) \in L^2_{{\rm loc}}(\norm{V_t};\R^{n+1})$, and for every compact set $K \subset U$ and for every $t < T$ it holds $\sup_{w \in \left[0,t\right]} \norm{V_w} (K) < \infty$;
\item[(d)] $\h(V_t,\cdot)\in \eltwoloc(\norm{V_t}\times dt)$;
\item[(e)]
For all $0\leq t_1<t_2<T$ and $\varphi\in C_c^1(U\times\left[0,T\right);\mathbb R_+)$, it holds
\begin{equation}
\label{brakineq}
\begin{split}
&\norm{V_{t_2}}(\varphi(\cdot,t_2)) - \norm{V_{t_1}}(\varphi(\cdot,t_1)) \\ 
&\qquad\leq \int_{t_1}^{t_2} \int_{U} \left( - \varphi (x,t) \,\abs{\h(x,V_t)}^2 + \nabla\varphi(x,t) \cdot \h(x,V_t) + \frac{\partial\varphi}{\partial t}(x,t) \right)    d\norm{V_t}(x)\,dt\,.
\end{split}
\end{equation}
\end{enumerate}
\end{definition}

The inequality \eqref{brakineq} is typically referred to as \emph{Brakke's inequality}.It provides an elegant weak formulation of the velocity: indeed, if a family of smooth surfaces $\set{\Gamma(t)}_{t\ge 0}$ satisfies \eqref{brakineq} with respect to the naturally associated weight measures, then $\set{\Gamma(t)}_{t\ge 0}$ is an MCF in the classical sense. See, for instance, \cite[Proposition $2.1$]{TonegawaBook}.

\subsection{Free boundary Brakke flows}
\begin{definition}
Let $0 < T \leq \infty$, and let $U \subset \R^{n+1}$ be an open set with $\partial U$ of class $C^2$. A \emph{$k$-dimensional free boundary Brakke flow} in $U$ is a one-parameter family of varifolds $\set{V_t}_{t\in\left[0,T\right)}\in \V_k(\clos{U})$ such that all of the following hold:
  \begin{enumerate}
\item[(a)]
For a.e.~$t\in [0,T)$, $V_t\in \IV_k(\clos{U})$;
\item[(b)]
For a.e.~$t\in [0,T)$, $V_t$ has free boundary at $\partial U$;
\item[(c)] The generalized free boundary mean curvature $\h^\fb(\cdot,V_t)$ satisfies $\h^\fb(\cdot,V_t) \in L^2_{{\rm loc}}(\norm{V_t};\R^{n+1})$, and for every compact set $K\subset \clos{U}$ and for every $t<T$ it holds $\sup_{w\in[0,t]} \norm{V_w}(K)<\infty$.
\item[(d)] $\h^\fb(V_t,\cdot)\in \eltwoloc(\norm{V_t}\times dt)$;
\item[(e)]
For all $0\leq t_1<t_2<T$ and $\varphi\in C_c^1(\R^{n+1}\times[0,T);\R_+)$ with $\nabla \varphi(\cdot, t) \in \T{\partial U}$,
\begin{equation}
\label{brakineqfb}
\begin{split}
\norm{V_{t_2}}(\varphi&(\cdot,t_2)) - \norm{V_{t_1}}(\varphi(\cdot,t_1))\\
&\leq \int_{t_1}^{t_2}\int_{\R^{n+1}}\left( -\varphi(x,t) \abspiccolo{\h^\fb(x,V_t)}^2+\nabla \varphi (x,t) \cdot \h^\fb (x,V_t) + \frac{\partial \varphi}{\partial t}(x,t) \right)\, d\norm{V_t}(x)\, dt\, .
\end{split}
\end{equation}

\end{enumerate}
While \eqref{brakineq} is unaffected by the behavior of the varifolds on $\partial U$, \eqref{brakineqfb} is sensitive to it. The tangentiality of the test functions encodes the $90$-degree angle condition. 
\end{definition}
\noindent \cite[Proposition 4.2]{Bao2026} shows that actually any \emph{integral} free boundary Brakke flow satisfies Brakke's inequality even for test functions whose gradient do not satisfy the tangentiality condition. However, this holds only when the integrality of the flow is known a priori, as the result relies on Brakke's perpendicularity theorem (see \cite[Section $5.8$]{Brakke}).
\subsection{Free boundary BV flow}
We introduce another weak notion of MCF, which is related to the motion of hypersurfaces given as the boundaries of a finite collection of sets of finite perimeter. Given a domain $U$ and $N\ge 2$, we say that $\set{E_1,\ldots,E_N}$ is an $\mathcal L^{n+1}$-partition of $\R^{n+1}$ if $E_i\subseteq U$ for every $i$, they are pairwise disjoint, and $\mathcal L^{n+1}(U\setminus (\cup_{i=1}^N E_i))=0$.
\begin{definition}
Suppose $N \ge 2$ is an integer, and let $0 < T \leq  \infty$. $N$ one-parameter families $\{E_i (t)\}_{t\in\left[0,T\right)}$ ($i=1,\ldots,N$) identify a free boundary $\mathrm{BV}$ solution for multi-phase MCF in $U$ if all of the following hold:
\begin{enumerate}
    \item For a.e.~$t \in \left[0,T\right)$, $\{E_1(t),\ldots,E_N(t)\}$ is an $\Leb^{n+1}$-partition of $U$, $E_i(t)$ is a set of locally finite perimeter, and, setting $I_{i,j}(t)\define\pa^\ast E_i (t)\cap \pa^\ast E_j(t)\cap U$ for $i \neq j$, 
    \begin{equation*}
        \esssup_{t \in \left[0,T\right)} \sum_{i,j=1,\,i \neq j}^N \Ha^n (I_{i,j}(t)) < \infty\,;
    \end{equation*}
    \item There exist scalar functions $v_1,\ldots,v_N$ such that
    \begin{equation*}
        \int_0^T \int_{\pa^\ast E_i(t)\cap U} |v_i(x,t)|^2\,d\Ha^n(x)\,dt < \infty \quad \mbox{for every $i$}\,,
    \end{equation*}
    and with the property that
    \begin{equation*}
        \begin{split}
& \left.\int_{E_i(t)} \varphi(x,t) \, dx\right|_{t=t_1}^{t_2} =  \int_{t_1}^{t_2} \int_{E_i(t)} \frac{\partial\varphi}{\partial t}(x,t) \, dx\,dt  
\\
& \qquad+ \int_{t_1}^{t_2} \int_{\pa^\ast E_i(t)\cap U} \varphi(x,t) \, v_i(x,t)\,d\Ha^n(x)\,dt 
\end{split}
    \end{equation*}
    for a.e.~$0 \le t_1 < t_2 < T$ and for all $\varphi \in C^1_c(\R^{n+1} \times \left[0,T\right))$ having $\nabla \varphi (\cdot, t)\in \T{\partial U}$;
    \item Setting $\nu_i(x,t) = \nu_{E_i(t)}(x)$ for the outer unit normal to the reduced boundary of $E_i(t)$ at $x$, it holds 
    \begin{equation*}
        v_i(\cdot,t)\,\nu_i (\cdot,t) = v_j(\cdot, t)\,\nu_j(\cdot,t) \quad \mbox{$\Ha^n$-a.e.~on $I_{i,j}(t)$, for a.e. $0 \leq t < T$}\,;
    \end{equation*}
    \item The functions $v_i$ further satisfy
    \begin{equation*} 
        \sum_{i\neq j} \int_0^T \int_{I_{i,j}(t)} {\rm div}\,g - (\nu_i \otimes \nu_i) \cdot \nabla g\,d\Ha^n\,dt = - \sum_{i \neq j} \int_0^T \int_{I_{i,j}(t)} v_i \, \nu_i \cdot g \,  d\Ha^n\,dt
    \end{equation*}
    for all vector fields $g \in C^1_c(\R^{n+1}\times \left[0,T\right];\R^{n+1})\cap \T{\partial U}$;
\item The following inequality holds for 
a.e.~$0\leq t< T$:
\begin{equation*}
    \sum_{i,j=1,\,i \neq j}^N \Ha^n(I_{i,j}(t)) + \sum_{i,j=1,\,i \neq j}^N \int_0^t\int_{I_{i,j}(s)} |v_i(x,s)|^2\,d\Ha^n(x)\,ds \leq \sum_{i,j=1,\,i \neq j}^N \Ha^n(I_{i,j}(0))\,.
\end{equation*}
\end{enumerate}
If only $(1)-(3)$ are verified, we speak about \it{generalized} free boundary $BV$ flow in $U$.
\end{definition}

\subsection{Main results}
We are now ready to state precisely the main results of the present paper. 
\begin{theorem}
    \label{thm: main theorem precise}
    Let $\Gamma_0$ and $E_{0,1},\ldots, E_{0,N}$ be as in \autoref{ass:main}. There exists an $n$-dimensional Brakke flow $\set{V_t}_{t\ge 0}$ in $\R^{n+1}_+$ with free boundary at $H_0$ such that the following assertions hold:
    \begin{itemize}
    \item[(1)] $\norm{V_0}= \Ha^n\mres{\Gamma_0}$;
    \item[(2)] If $\Ha^n(\bigcup_{i=1}^N (\partial E_{0,i} \setminus \partial^\ast E_{0,i}))=0$, then $\lim\limits_{t\to 0+}\norm{V_t}=\norm{V_0}$.
    \item[(3)] $\displaystyle \norm{V_t}(\R^{n+1})+\int_0^t \int_{\R^{n+1}} \abspiccolo{\h^\fb(\cdot, V_s)}^2 \, d\norm{V_s}\, ds \leq \Ha^n(\Gamma_0) \quad \mbox{for all} \ t>0$.
    \end{itemize}
    Moreover, for each $i=1,\ldots,N$, there exists a family of open sets $\set{E_i(t)}_{t\ge 0}$ in $\R^{n+1}_+$ such that, setting 
    \begin{equation*}
        \Gamma(t)\define \R^{n+1}_+ \setminus \bigcup_{i=1}^N E_i(t)\,,
    \end{equation*}
    the following assertions hold:
    \begin{itemize}
        \item[(i)] $E_i(0)=E_{0,i}$ for $i=1,\ldots,N$.
        \item[(ii)] $E_1(t),\ldots, E_N(t)$ are pairwise disjoint for $t\in \R_+$.
        \item[(iii)] For every $0<T<\infty$, the families $\set{E_i(t)}_{t\in[0,T)}$ define a generalized free boundary $\mathrm{BV}$ flow in $\R^{n+1}_+$, with scalar velocities given by
        \begin{equation*}
            v_i(x,t)= \h^\fb(x,V_t)\cdot \nu_i(x,t)\,,
        \end{equation*}
        where $\nu_i(x,t)$ is the outer normal to the reduced boundary of $E_i(t)$ at the point $x$.
        \item[(iv)] If $V_t$ is a unit density flow in $\R^{n+1}_+\times (t_1,t_2)$, then $\set{E_i(t)}_{t\in (t_1,t_2)}$ actually defines a free boundary $\mathrm{BV}$ flow in $\R^{n+1}_+$.
    \end{itemize}
\end{theorem}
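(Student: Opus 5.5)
The plan is to reduce the statement to the Kim--Tonegawa existence theorem for canonical multi-phase Brakke flows in $\R^{n+1}$, applied to a reflected initial datum, while enforcing reflection symmetry throughout the approximation scheme. Let $\sigma(x_1,\dots,x_{n+1})=(x_1,\dots,-x_{n+1})$. Using (A3), define
\[
E_{0,i}^S\define E_{0,i}\cup\sigma(E_{0,i})\cup \mathrm{int}_{H_0}(B_{0,i})\cup\sigma(B_{0,i}),
\]
where the point is that each $x\in B_{0,i}$ has a full ball in $E_{0,i}^S$, so $E_{0,i}^S$ is open, and $\Gamma_0^S\define \R^{n+1}\setminus\bigcup_i E^S_{0,i}$. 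By (A3),
\[
\Gamma_0^S=\Gamma_0\cup\sigma(\Gamma_0)\cup\Bigl(H_0\setminus \textstyle\bigcup_i B_{0,i}\Bigr)
\]
is closed, countably $n$-rectifiable and satisfies $\Ha^n(\Gamma_0^S)=2\Ha^n(\Gamma_0)$. Thus $\{E_{0,i}^S\}$ is an admissible open partition in the sense of \cite{KimTonegawa}, and it is $\sigma$-invariant.

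Next I would rerun the time-discrete scheme of \cite{KimTonegawa} with two symmetry constraints. First, the smoothed mean curvature $\h_\eps$ of a $\sigma$-symmetric varifold is $\sigma$-equivariant, so the motion step preserves symmetry and its velocity is tangent to $H_0$ on $H_0$. Second, the (volume controlled) area-reducing Lipschitz deformations are restricted to $\sigma$-equivariant ones, i.e.\ $f=\sigma\circ f\circ\sigma$. All the estimates of \cite{KimTonegawa} survive: the a priori mass bound, the $L^2$ bound on $\h_\eps$, the approximate Brakke inequality, and the compactness of grains as BV functions. The one exception is the rectifiability/integrality step, where one compares with an optimal competitor, and that competitor need not be symmetric. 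The limit is then a $\sigma$-symmetric Brakke flow $\{V^S_t\}$ in $\R^{n+1}$ with symmetric grains $E^S_i(t)$. I would set $V_t\define V^S_t\mres(\closedhalf\times\bG(n+1,n))$, counting $H_0$ with half weight so that $V^S_t=V_t+\sigma_\#V_t$ on $H_0$, and $E_i(t)\define E_i^S(t)\cap\R^{n+1}_+$.

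For the verification step, take $g\in\T{H_0}$. Then $g$ and $\sigma g\circ\sigma$ have the same first variation, so $\delta V_t(g)=\tfrac12\delta V^S_t(g)$. This shows that $V_t$ has free boundary with $\h^\fb=\h(V^S_t,\cdot)$, which is tangent to $H_0$ $\norm{V_t}$-a.e.\ by symmetry. The same argument applied to $\varphi$ with tangential gradient, extended evenly, gives \eqref{brakineqfb}, the energy bound (3), and items (1)--(2) and (i)--(iv) by restriction of the corresponding Kim--Tonegawa conclusions and of the results of \cite{STCanonical}; the latter supply the canonical BV relation (iii) and, for unit density, (iv). All of these transfer by halving. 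This is the correspondence I would state as the content of the section on symmetric Brakke flows.

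The main obstacle is integrality of $V_t$ up to $H_0$, i.e.\ (a) for $V^S_t$ with only symmetric deformations allowed. In the interior, near points off $H_0$, any local deformation can be symmetrized by applying it together with its mirror image on disjoint supports, so the Kim--Tonegawa rectifiability and integrality proof goes through verbatim. Near $H_0$ I would blow up at $\norm{V^S_t}$-typical points of $H_0$. Symmetric tangent cones are either $H_0$ with some multiplicity or $\sigma$-invariant planes, the latter being orthogonal to $H_0$. For orthogonal planes, the near-optimal competitor constructed in \cite{KimTonegawa} (projection onto a plane with small volume adjustment) can be chosen equivariant, because projection onto a $\sigma$-invariant plane commutes with $\sigma$. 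For the case of mass on $H_0$ itself, I would show that the multiplicity is an even integer via a symmetric crushing argument: the grains are symmetric, so $H_0$ is never part of a reduced boundary between distinct grains, and equivariant area-reducing maps can collapse thin symmetric layers. I expect making this last part rigorous to be the hardest step.
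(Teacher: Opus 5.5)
\textbf{Verdict: there is a genuine gap.} Your overall architecture matches the paper's. You symmetrize using the sets $B_{0,i}$, run the Kim--Tonegawa scheme with $\sigma$-equivariant deformations, restrict to $\closedhalf$ with half weight on $H_0$, and transfer everything by halving. The gap is in the step you defer, and the mechanism you sketch for it would not work.

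\textbf{Evenness on $H_0$.} The discrete varifolds $\partial\E$ are built on the topological boundaries $\bigcup_i\partial E_i$ with multiplicity one. So symmetry of the grains, and the fact that $H_0$ carries no reduced boundary, do not stop them from charging $H_0$. The equivariant ``crushing'' you propose makes this worse. Collapsing a thin symmetric layer of one grain onto $H_0$ is admissible, volume controlled (for thin layers) and halves the area. It also leaves a multiplicity-one slit on $H_0$ inside the surrounding grain. This is exactly how odd density on $H_0$, and hence a non-integral restriction $V_t$, can arise in the limit. The missing idea is to forbid this at the discrete level:
\begin{itemize}
\item The paper adds condition (d), $\Ha^n\bigl(H_0\cap\bigcup_i\partial\widetilde E_i\bigr)=0$, to the class $\classEvc(\E,j,\operatorname{Sym})$.
\item It gets (d) at $k=0$ from (A3).
\item It checks that the motion step preserves (d): a symmetric diffeomorphism $f_2$ maps $H_0$ onto $H_0$ and each open halfspace into itself.
\end{itemize}
Once $\|\partial\E^S_{j,k}\|(H_0)=0$ along the iteration, sheets near $H_0$ come in mirror pairs off $H_0$. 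The sheet counting of Kim--Tonegawa Section 8 is then redone for symmetric $Y\subset H_0^\perp$ with $Y\cap H_0=\emptyset$:
\begin{itemize}
\item The hole-expanding competitor of their Lemma 8.1 acts on horizontal slices, so it is automatically equivariant and preserves (d).
\item The requirement $\mathrm{diam}\,Y<j^{-2}$, needed for the $\mathcal A_j$ condition, is replaced by $\mathrm{diam}\,Y_\pm<j^{-2}$, handled by splitting into two disjoint pieces.
\item Allard's partition lemma is replaced by a symmetric version.
\end{itemize}
This bounds the fibre counts by $2\nu-2$ and forces the density on $H_0$ to be even.

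Your case analysis of vertical tangent planes is beside the point. By locality of approximate tangent planes, the tangent plane at $\|V^S\|$-a.e.\ point of $H_0$ is $H_0$, and that is the only case that matters there. Also, the Kim--Tonegawa competitor is a hole expansion, not a projection onto a plane.

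\textbf{Rectifiability at $H_0$.} This is also not addressed. Blowing up presupposes tangent planes. The lower density bound at points of $H_0$ needs an area-reducing competitor in $B_r(z)\cup B_r(\sigma(z))$ when the two balls overlap, and there the Kim--Tonegawa radial projection is not equivariant. The paper builds a symmetrized radial projection from $x_0$ and $\sigma(x_0)$ that stops at $H_0$, using isoperimetric estimates on the symmetric ball. It then checks that the new boundary on $H_0$ lies only in the $(n-1)$-dimensional set $H_0\cap\partial B_r(z)$, so the competitor stays in the restricted class.

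\textbf{A smaller slip.} $g$ and $\sigma\circ g\circ\sigma$ do not have the same first variation with respect to $V_t$; take $g$ supported in the open upper halfspace to see this. Instead, use that $\delta V_t(g)$ only sees $g$ on $\closedhalf$, reduce to symmetric $g$, and combine $\delta V^S_t(g)=2\,\delta V_t(g)$ with $\sigma(\h(x))=\h(\sigma(x))$.
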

In the following statement, we adopt the notation $\sigma$ for the reflection map about $H_0$ (see \eqref{e: def sigma})
\begin{theorem}
\label{theorem: unit density precise}
     Under \autoref{ass:main}, further assume that there exist $r_0>0$ and $\delta_0>0$ such that 
\begin{equation}
\label{e: assumption density ratio}
    \sup_{x\in \R^{n+1}, \, 0<r<r_0} \frac{\Ha^n(\Gamma_0\cap B_r(x))+ \Ha^n(\Gamma_0\cap B_r(\sigma(x))}{\omega_n \, r^n}<2-\delta_0\, .
\end{equation}
Then, there exists $T_0=T_0(n,r_0,\delta_0, \Ha^n(\Gamma_0))\in (0,\infty)$ such that $\set{V_t}_{t\in[0,T_0)}$ in \autoref{theorem: main thm introduction} is unit density for $\mbox{a.e.} \ t\in[0,T_0)$ and the families $\set{E_i(t)}_{t\in[0,T_0)}$ define a free boundary $BV$ flow. Moreover, we have $\norm{V_t}(H_0)=0$ for $\mbox{a.e.}\ t\in [0,T_0)$.
\end{theorem}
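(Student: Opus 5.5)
The plan is to work with the symmetrized Brakke flow $\{\widetilde V_t\}$ in $\R^{n+1}$ from which $V_t$ is obtained by restriction to $\closedhalf$. Its initial weight is $\Ha^n\mres(\Gamma_0\cup\sigma(\Gamma_0))$, and the quantity in \eqref{e: assumption density ratio} is exactly a bound for the density ratios of this symmetric datum. More precisely, for $x\in\R^{n+1}$ one has $\Ha^n((\Gamma_0\cup\sigma(\Gamma_0))\cap B_r(x))=\Ha^n(\Gamma_0\cap B_r(x))+\Ha^n(\Gamma_0\cap B_r(\sigma(x)))$, since $\sigma(B_r(x))=B_r(\sigma(x))$. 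Hence the initial datum $\widetilde\mu_0$ satisfies $\widetilde\mu_0(B_r(x))\le(2-\delta_0)\omega_n r^n$ for all $x$ and all $r<r_0$. Note that $H_0$ is not charged at $t=0$, because by (A3) $\Gamma_0\cap H_0$ is empty up to an $\Ha^n$-null set.

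The first step is to propagate this density-ratio bound for a short time using Huisken's monotonicity formula, which holds for the Brakke flow $\widetilde V_t$ in $\R^{n+1}$. I would use the standard localized form with cut-off $\phi(x,t)=(1-(|x-x_0|^2+2n t)/R^2)_+^3$. The argument runs as follows:
\begin{itemize}
\item For $(x_0,s)$ with $s\in(0,T_0)$, the Gaussian density ratio $\int\rho_{x_0,s}(\cdot,t)\phi\,d\|\widetilde V_t\|$ is nonincreasing in $t$, up to an error controlled by $\Ha^n(\Gamma_0)$ and $R$.
\item At $t=0$ it is bounded by $(2-\delta_0)+C\,e^{-cr_0^2/s}\Ha^n(\Gamma_0)/r_0^n$.
\item For this, split the Gaussian integral into the part within $r_0$ and the tail. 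The near part uses the density ratio bound and the layer-cake formula, and the far part uses the total mass bound (3) of \autoref{thm: main theorem precise}.
\end{itemize}
Choosing $T_0$ small in terms of $n,r_0,\delta_0,\Ha^n(\Gamma_0)$ gives Gaussian density ratios below $2-\delta_0/2$ for all $t<T_0$. Consequently the upper density $\Theta^n(\|\widetilde V_t\|,x)<2$ everywhere in $\R^{n+1}$ for every $t\in(0,T_0)$.

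The second step is to use integrality. For a.e. $t$, $\widetilde V_t$ is integral, so its density is an integer $\ge1$ $\|\widetilde V_t\|$-a.e., and the bound forces it to equal $1$. Restricting to $\closedhalf$ shows that $V_t$ is unit density on $\R^{n+1}_+$. The same holds on $H_0$, with the caveat that the weight of $V_t$ on $H_0$ must be defined compatibly with the restriction; I would check this against the definitions in \autoref{section: symmetric Brakke flow}. Then (iv) of \autoref{thm: main theorem precise} gives the BV-flow statement on $[0,T_0)$.

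The main obstacle is showing $\|V_t\|(H_0)=0$. The key observation is that $\widetilde V_t$ is $\sigma$-symmetric. Suppose $\|\widetilde V_t\|(H_0)>0$. Then at $\Ha^n$-a.e. such point the approximate tangent plane is $H_0$. The symmetrization counts the portion lying in $H_0$ with multiplicity coming from both halves; equivalently, in the construction $H_0$-mass arises as a limit of symmetric pairs of sheets collapsing onto $H_0$. Concretely, I expect the limiting symmetric varifold to have even multiplicity $\|\widetilde V_t\|$-a.e. on $H_0$, so its density there would be $\ge2$, contradicting the bound from the first step.

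To justify the evenness, I would argue as follows. The grains $E_i(t)$ are symmetric, so an $H_0$-portion of $\widetilde V_t$ cannot lie in the reduced boundary of any symmetrized grain. Hence that mass is not a one-sided boundary interface, and by integrality plus the parity/``canonical'' relation between $\widetilde V_t$ and $\partial^\ast E_i$ (multiplicity $\equiv$ boundary count mod 2 from the Kim–Tonegawa construction), the multiplicity on $H_0$ must be even. If that parity relation is not available, the fallback is a direct argument through the approximations. The area-reducing Lipschitz maps remove $H_0$-pieces that are not in the reduced boundary of a grain, so any surviving $H_0$-mass in the limit must come from two sheets converging from above and below. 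Lower semicontinuity of density along the approximations then gives density $\ge2$.
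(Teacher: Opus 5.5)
Your overall structure matches the paper's. You propagate the symmetric density-ratio bound by Huisken's monotonicity to get unit density of $V^S_t$ for a.e.\ $t<T_0$; the paper simply imports this from \cite[Theorem 2.13]{STCanonical}, and your Step 1 is a sound expansion of it. You then deduce $\norm{V_t}(H_0)=0$ from evenness of the density of $V^S_t$ on $H_0$. The gap is that you treat this evenness as still to be proved, and neither of your two justifications works.

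\emph{The parity route.} The relation you invoke (odd multiplicity exactly on reduced boundaries) is not available from \cite{KimTonegawa,STCanonical}, and for multi-phase flows one should not expect it. An odd number of interfaces separating two pieces of the same grain can collapse onto one sheet while the grains in between lose their volume; this gives odd multiplicity at points lying in no $\partial^\ast E_i$. So $\Ha^n(\partial^\ast E^S_i(t)\cap H_0)=0$ carries no parity information.

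\emph{The fallback.} The argument ``two sheets converge from above and below, so by lower semicontinuity the density is at least $2$'' is not a valid principle. Weak convergence transfers a lower bound on $\norm{\partial\E^S_j(t)}(B_r(x))$ to $\norm{V^S_t}(B_r(x))$ only if that bound holds uniformly in $j$ at each fixed scale $r$. Showing that the approximating sheets near $H_0$ carry the mass of full discs at fixed scales, rather than being small, folded or sparse pieces, is exactly what needs the almost-minimality $j^{2(n+1)}\Deltajvc\norm{\partial\E^S_j}(\R^{n+1})\to 0$ and the symmetric lemmas of \autoref{section: integrality}. \autoref{lemma 8.5} is the rigorous form of your heuristic.

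In the paper, evenness comes from an \emph{upper} bound by fiber counting in \autoref{theorem: integrality}. The approximating partitions are symmetric and do not charge $H_0$ (\autoref{prop: symmetry along iteration}). That property rests on condition (d) of \autoref{ar8} and on the injectivity of $f_2$, not on area-reducing maps removing $H_0$-pieces. Hence any fiber $\{x\in G^{\ast\ast}_\ell : T(x)=a\}$ with $2\nu-1$ points contains a symmetric set of $2\nu$ points off $H_0$. Applying \autoref{lemma 8.5} to that set contradicts $\lambda^{n+1}d<2\nu$, so every fiber has at most $2\nu-2$ points. This gives $d\le 2\nu-2$, hence $d=2\nu-2$.

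The repair is simply to cite \autoref{theorem: existence of limit symmetric measures}(a): for a.e.\ $t$ the density of $V^S_t$ is even $\norm{V^S_t}$-a.e.\ on $H_0$. Together with your Step 2 this gives $\norm{V^S_t}(H_0)=0$.

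This step is also what makes $V_t$ unit density on all of $\closedhalf$. By \autoref{lemma: rectifiability of restriction}, $V_t$ has density $\theta/2$ on $H_0$. A density-one portion of $V^S_t$ on $H_0$ would therefore give $V_t$ density $1/2$ there, contrary to your remark that unit density ``also holds on $H_0$''.
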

\begin{remark}
    The numerator of \eqref{e: assumption density ratio} can be equivalently written as 
    \begin{equation*}
    \Ha^n\left( \left(\Gamma_0 \cup \sigma(\Gamma_0) \right) \, \cap B_r(x)\right)\, ,
    \end{equation*}
   meaning we are actually prescribing a uniform $2-\delta_0$ density ratio on $\Gamma_0 \cup \sigma(\Gamma_0).$ Thus, $\Gamma_0$ must satisfy a uniform $2-\delta_0$ density ratio when it is far from $H_0$, a uniform $1-\frac{\delta_0}{2}$ density ratio on $H_0$ and, intuitively, it cannot approach the hyperplane ``too tangentially’’.
\end{remark}
\section{Preliminaries}
\subsection{Reflections and symmetry}
Set $H_0=\set{x_{n+1}=0}$, $\R^{n+1}_+\define \set{x_{n+1}>0}$ and $\R^{n+1}_- \define \set{x_{n+1}<0}$. We also introduce 
\begin{equation*}
    \closedhalf\define \clos{\R^{n+1}_+}=\set{x_{n+1}\ge 0}\, .
\end{equation*}
For any $A\subset \R^{n+1}$, we also define 
\begin{equation*}
    A_+\define A \cap \R^{n+1}_+ \ \mbox{and} \ A_-\define A \cap \R^{n+1}_-\, .
\end{equation*}
We will sometimes use the symbol $A^+$ or $A^-$ instead, depending on the presence of multiple subscripts. Let $\d$ denote the signed distance function from $H_0$, that is
\begin{equation*}
    \d(x)= \begin{cases*}
        \mbox{dist}(x,H_0) &if $x_{n+1}\ge 0$\\
        -\mbox{dist}(x,H_0) &if $x_{n+1}< 0$
    \end{cases*}
\end{equation*}
so that $\d$ is a linear function and $\nabla \d(x)$ is the constant vector $e_{n+1}$. Let $\pi(x)$ be the nearest point projection of $x\in \R^{n+1}$ onto $H_0$, namely \begin{equation*}
\pi(x_1,\ldots,x_n,x_{n+1})\define x-\d (x) \nabla \d(x)= (x_1,\ldots,x_n,0)\, .
\end{equation*} We define the reflection across $H_0$ of a point $x$ to be 
\begin{equation}
\label{e: def sigma}
    \sigma (x)\define2\pi(x)-x=x-2 \d(x)\, \nabla \d(x)\, ;
\end{equation}
in coordinates, $\sigma(x_1,\ldots,x_n,x_{n+1})=(x_1,\ldots,x_n,-x_{n+1})$. We denote by $Q\define D \sigma= \mbox{id}-2\, \nabla \d\otimes \nabla \d$, a constant orthogonal matrix such that $\sigma(x)=Q\left[x\right]$.
We give the notion of symmetry for several objects. The dependence on the plane $H_0$ is henceforth implicit.
\begin{definition}
\label{def: symmetryc objects}
\begin{itemize}
    \item[(a)]A set $A\subseteq \R^{n+1}$ is said to be symmetric if $x\in A$ if and only if $\sigma(x)\in A$.
\item[(b)] Given a vector field $g\in C(\R^{n+1}, \R^{n+1})$, we define its reflection as $g^S(x)\define \sigma(g(\sigma(x)))$ for every $x\in \R^{n+1}$. $g$ is said to be symmetric if $g(x)=g^S(x)$ for every $x\in \R^{n+1}$, that is 
\begin{equation*}
  g(x)=\sigma(g(\sigma(x))) \ \mbox{for all} \ x\in \R^{n+1}\, ,
\end{equation*}
or, equivalently,
\begin{equation*}
    \sigma(g(x))=g(\sigma(x)) \ \mbox{for all} \ x\in \R^{n+1}\, ;
\end{equation*}
note that this implies that $g$ is tangential along $H_0$.
\item[(c)] A Radon measure $\mu^S$ in $\R^{n+1}$ is said to be symmetric if $\sigma_\sharp (\mu^S)=\mu^S$. This is equivalent to say that there exists another Radon measure $\mu$ supported on $\closedhalf$ such that $\mu^S=\mu+\sigma_\sharp \mu$. Indeed, given $\mu^S$, one can define $\mu=\mu^S\mres{\R^{n+1}_+}+\frac12\mu^S\mres{H_0}$ that satisfies the described property. We analogously define symmetric varifolds. Given a symmetric varifold $V^S\in \V_n(\R^{n+1})$ such that $V^S=V+\sigma_\sharp V$ for $V\in \V_n(\closedhalf)$, for any function $f\in C_c(\bG_n(\R^{n+1}))$, we have
\begin{equation*}
    \sigma_\sharp V (f) = \int_{\bG_n(\R^{n+1})} f(\sigma(y), Q\,S\, Q) \, dV(y,S)
\end{equation*}
and then 
\begin{equation}
\label{e: integration on the reflection}
    V^S(f)= \int_{\bG_n(\R^{n+1})} f(y,S)+f(\sigma(y), Q\,S\, Q) \, dV(y,S)\, ,
\end{equation}
where we are making an abuse of notation by denoting by the same symbol the varifold measure and the associated linear functional on the Grassmanian. \\
Given such a $V$, we also refer to $V^S$ as its \emph{symmetrization}.
    \end{itemize}
\end{definition}
\begin{lemma}
\label{lemma: limit of symmetric is symmetric}
    Let $\setpiccolo{\mu^S_k}_{k=1}^\infty$ be a sequence of symmetric Radon measures on $\R^{n+1}$ and let $\mu^S$ be a Radon measure on $\R^{n+1}$ such that $\mu_k^S\rightharpoonup \mu^S$ in the sense of measures. Then, $\mu^S$ is symmetric.
    \begin{proof}
        Let $\varphi \in C_c(\R^{n+1})$. By the symmetry of $\mu_k^S$ and since $\varphi(\sigma(\cdot))$ is still a compactly supported continuous function, we have
        \begin{equation*}
            \sigma_\sharp (\mu_k^S)(\varphi)=\mu_k^S(\varphi(\sigma))\longrightarrow \mu^S(\varphi(\sigma))=(\sigma_\sharp \mu^S)( \varphi)\, ,
        \end{equation*}
        that is the thesis.
    \end{proof}
\end{lemma}
\subsection{Classes of symmetric test functions and vector fields}

Define, for every $j \in \Na$, the classes $\cA_j$ and $\cB_j$ as follows:

\begin{equation*} 
\begin{split}
\cA_j \define \{ \varphi &\in C^2(\R^{n+1}; \R^+) \, \colon \, \varphi(x) \leq 1, \, \abs{\nabla \varphi(x)} \leq j\, \varphi(x),\\
& \normpiccolo{\nabla^2\varphi(x)} \leq j \ \mbox{and} \ \varphi(x)=\varphi(\sigma(x)) \ \mbox{for every} \  x\in \R^{n+1} \} ,
\end{split}
\end{equation*}
\begin{equation*} 
\begin{split}
\cB_j \define \{ g &\in C^2(\R^{n+1}; \R^{n+1}) \, \colon \, \abs{g(x)} \leq j,\,\, \abs{\nabla g(x)} \leq j\, , \\ 
& \normpiccolo{\nabla^2 g(x)} \leq j\, , \sigma(g(x))=g(\sigma(x)) \ \mbox{for every} \ x \in \R^{n+1} \ \mbox{and} \ \norm{g}_{L^2} \leq j\}\,.
\end{split}
\end{equation*}
The properties of functions $\varphi \in \cA_j$ and vector fields $g \in \cB_j$ are precisely as in \cite[Lemma 4.6, Lemma 4.7]{KimTonegawa}, plus the tangentiality along $H_0$ obtained by symmetry. We record them all in the following lemma.

\begin{lemma} 
Let $x,y \in \R^{n+1}$ and $j \in \Na$. For every $\varphi \in \cA_j$, the following properties hold:
\begin{gather}
\varphi(x)  \leq \varphi(y) \exp(j\, \abs{x-y})\,, \label{e:Gronwall} \\
\nonumber \abs{\varphi(x) - \varphi(y)} \leq j \, \abs{x-y} \varphi(y) \exp(j\, \abs{x-y})\,,  \\
\nonumber \abs{\varphi(x) - \varphi(y) - \nabla\varphi(y) \cdot (x-y)} \leq j \abs{x-y}^2 \varphi(y) \exp(j\, \abs{x-y}) \,,\\
\nonumber \nabla \varphi(x) \cdot \nabla d(x) =0 \ \mbox{for} \ x\in H_0\, .
\end{gather}

Also, for every $g \in \cB_j$:
\begin{gather*}
\abs{g(x) - g(y)} \leq j\, \abs{x-y}\, ,\\
g(x)\cdot \nabla d(x)=0 \ \mbox{for} \ x\in H_0\, .
\end{gather*}

\end{lemma}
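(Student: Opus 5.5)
The estimates for $\varphi\in\cA_j$ and for $g\in\cB_j$ are exactly those of \cite[Lemma 4.6, Lemma 4.7]{KimTonegawa}. Their proofs use only the differential constraints $\abs{\nabla\varphi}\le j\varphi$, $\normpiccolo{\nabla^2\varphi}\le j$, $\abs{\nabla g}\le j$. The symmetry requirement only shrinks the classes, so these estimates can be quoted directly. Still, I would sketch them so the argument is self-contained, and then prove the two tangentiality statements, which are new.

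For \eqref{e:Gronwall}, fix $x,y$ and set $\gamma(s)=y+s(x-y)$ for $s\in[0,1]$, and $f(s)=\varphi(\gamma(s))$. Then $\abs{f'(s)}\le \abs{\nabla\varphi(\gamma(s))}\,\abs{x-y}\le j\abs{x-y}f(s)$. Gronwall's inequality (or differentiating $\log f$ where $f>0$) gives $f(s)\le f(0)\exp(js\abs{x-y})$. Note that if $f(0)=0$, uniqueness for the linear differential inequality forces $f\equiv0$. Taking $s=1$ gives \eqref{e:Gronwall}.

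The second estimate follows by integrating:
\begin{equation*}
\abs{\varphi(x)-\varphi(y)}\le\int_0^1\abs{f'(s)}\,ds\le j\abs{x-y}\sup_s f(s)\le j\abs{x-y}\varphi(y)\exp(j\abs{x-y}).
\end{equation*}
For the third estimate, use Taylor's formula with integral remainder, $f(1)-f(0)-f'(0)=\int_0^1(1-s)f''(s)\,ds$, together with $\abs{f''}\le \normpiccolo{\nabla^2\varphi}\abs{x-y}^2\le j\abs{x-y}^2$. The only subtle point is producing the factor $\varphi(y)\exp(j\abs{x-y})$ as in \cite{KimTonegawa}. There the bound on $\nabla^2\varphi$ is paired with the structure of the class (one can bound $\abs{f'(s)-f'(0)}$ through the Lipschitz bound on $\nabla\varphi$, combined with $f\le f(0)e^{j\abs{x-y}}$). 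I would simply follow their computation verbatim, since the classes coincide up to the symmetry constraint. The Lipschitz bound $\abs{g(x)-g(y)}\le j\abs{x-y}$ is immediate from the mean value inequality and $\abs{\nabla g}\le j$.

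For tangentiality, take $x\in H_0$, so $\sigma(x)=x$. Differentiating $\varphi=\varphi\circ\sigma$ gives $\nabla\varphi(x)=Q\nabla\varphi(\sigma(x))=Q\nabla\varphi(x)$. Since $Q=\mathrm{id}-2\nabla \d\otimes\nabla \d$, this says $2(\nabla\varphi(x)\cdot\nabla \d)\nabla \d=0$, that is, $\nabla\varphi(x)\cdot\nabla \d(x)=0$. Similarly, for $g\in\cB_j$ the identity $\sigma(g(x))=g(\sigma(x))=g(x)$, together with linearity of $\sigma=Q$, gives $Qg(x)=g(x)$, hence $g(x)\cdot e_{n+1}=0$.

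The main (mild) obstacle is only the exact constant bookkeeping in the second-order estimate. Everything else is a direct chain-rule argument.
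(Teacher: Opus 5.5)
Your overall route is the paper's: it quotes \cite[Lemmas 4.6--4.7]{KimTonegawa} and gets tangentiality from symmetry. Your tangentiality computations, the Gronwall bound, the first-order estimate and the Lipschitz bound for $g$ are all correct.

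\textbf{The gap is the second-order estimate.} With only $\|\nabla^2\varphi\|\le j$, your hint cannot produce the factor $\varphi(y)$. The Lipschitz bound on $\nabla\varphi$ gives $|f'(s)-f'(0)|\le js|x-y|^2$, with no factor $\varphi(y)$. Combining $|f'|\le j|x-y|f$ with $f\le f(0)e^{j|x-y|}$ gives only $|f'(s)-f'(0)|\le 2j|x-y|\varphi(y)e^{j|x-y|}$, which is first order in $|x-y|$.

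No bookkeeping fixes this, because under the constraints you list the inequality is false. Take $j\ge 3$ and $\varphi(x)=\frac{1}{3j}\bigl(2+\sin(jx_1)\bigr)$. It is symmetric, satisfies $\varphi\le 1$, $|\nabla\varphi|\le \frac13\le j\varphi$ and $\|\nabla^2\varphi\|\le \frac j3$. Choose $y$ with $\sin(jy_1)=-1$, so $\nabla\varphi(y)=0$, and set $x=y+te_1$. The left-hand side is $\frac{1}{3j}(1-\cos(jt))\sim \frac{jt^2}{6}$ as $t\to 0$. The right-hand side is $\frac{t^2}{3}e^{jt}\sim\frac{t^2}{3}$. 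Their ratio tends to $j/2>1$.

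\textbf{The missing ingredient} is the scale-invariant Hessian bound $\|\nabla^2\varphi(x)\|\le j\,\varphi(x)$. This is, to my knowledge, the constraint in Kim--Tonegawa's class $\mathcal A_j$, and their Lemma 4.6 relies on it. The display defining $\cA_j$ in this paper appears to drop the factor $\varphi(x)$, and the paper's one-line proof by citation implicitly uses the Kim--Tonegawa version. So your assertions that "their proofs use only $\|\nabla^2\varphi\|\le j$" and that "the classes coincide up to symmetry" are not correct as written.

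With the right constraint the estimate is one line:
\[
|f''(s)|\le j|x-y|^2 f(s)\le j|x-y|^2\varphi(y)e^{js|x-y|}.
\]
Then $f(1)-f(0)-f'(0)=\int_0^1(1-s)f''(s)\,ds$ gives the claimed bound, even with an extra factor $\frac12$.
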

\subsection{Open partitions} 
Let $U\subseteq \R^{n+1}$ be an open set.
\begin{definition}\label{def:op}
For $N \geq 2$, an  \emph{open partition} of $U$ in $N$ elements is a finite and ordered collection $\E = \{E_{i}\}_{i=1}^N$ of subsets $E_{i} \subseteq U$ such that:
\begin{itemize}
\item[(a)] $E_1,\ldots,E_N$ are open and mutually disjoint;
\item[(b)] $\Ha^n(U \setminus \bigcup_{i=1}^N E_i) < \infty$;
\item[(c)] $\left(\bigcup_{i=1}^N \partial E_i \right)\cap U$ is countably $n$-rectifiable.
\end{itemize}
\noindent
The set of all open partitions of $U$ of $N$ elements will be denoted $\op^N(U)$. If $U=\R^{n+1}$, we simply denote the class by $\opn.$

If $U=\R^{n+1}$ and $\E$ further satisfy
\begin{itemize}
    \item [(d)] $E_i$ is symmetric for each $i=1,\ldots,N$,
\end{itemize}
we say that $\E$ is a \emph{symmetric} open partition. This class will be denoted by $\opns(\R^{n+1})$ or simply $\opns$.
\end{definition}
\noindent
Note that some of the $E_i$ may be empty and that we are not assuming them to be connected. Condition $(c)$ is not redundant, since in general only the \emph{reduced} boundary of the set $E_i$ is countably rectifiable, but this may differ from the topological boundary. 
\begin{notation}
Given $\E =\set{E_i}_{i=1}^N\in \op^N(U)$, we set
\begin{equation*} 
\partial\E \define \var\left( \left(\bigcup_{i=1}^N \partial E_i \right) \cap U,\,  1\right) \in \IV_{n}(\R^{n+1})\,.
\end{equation*}
\end{notation}
\noindent We introduce the notion of symmetrization for a partition of the halfspace.
\begin{definition}
\label{def: symmetrization of grains}
Let $\E=\set{E_i}_{i=1}^N\in \op^N(\R^{n+1}_+)$. We define the sets 
\begin{equation}
\label{e: def Bi}
    B_{i}\define\set{x\in H_0 \, \colon \, \exists \,  r= r(x)\, \colon \,  B_r(x)\cap \R^{n+1}_+ \subseteq E_{i} }
\end{equation}
and the \emph{symmetrization} of $\E$ as the family of sets $\E^S\define\setpiccolo{E_i^S}_{i=1}^N$, where 
\begin{equation*}
E_i^S\define E_i \cup \sigma (E_i) \cup B_i\, .
\end{equation*}
\end{definition}
\begin{lemma}
\label{lem: symmetrization is opn}
    Given $\E=\set{E_i}_{i=1}^N\in \opn(\R^{n+1}_+)$, let $\E^S$ be the family of sets defined in \autoref{def: symmetrization of grains}. Then, $\E^S\in \opns(\R^{n+1})$, provided $\Ha^n\left(H_0 \setminus \left(\bigcup_{i=1}^N B_i\right)\right)<\infty$.
    \begin{proof}
    The family $\E^S$ is pairwise disjoint by definition. We show that each $E_i^S$ is open and, as a first step, we show that $B_i$ is open in the topology of $H_0$. In particular, we claim that for any given $x\in B_i$, points of $H_0$ sufficiently close to $x$ are also contained in $B_i$. Indeed, let $x\in B_i$, $r=r(x)$ be as in \eqref{e: def Bi}. If $y\in H_0$ is such that $\abs{y-x}<\frac r2 $, then $B_{\frac r2}(y) \cap \R^{n+1}_+\subset B_{r}(x) \cap \R^{n+1}_+\subseteq E_i,$ yielding the claim. Let now $z\in E_i^S$; we need to prove that there exists a ball $U_\rho(z) \subseteq E_i^S$. If $z\in E_i$ or $z\in \sigma(E_i)$, such ball must exist since these are open sets. If $z\in B_i$, by definition we have that for sufficiently small $\rho$ (depending on $z$), $B_\rho(z)\cap \R^{n+1}_+\subseteq E_i;$ thus, it must be $B_\rho (z) \cap \R^{n+1}_- \subseteq \sigma(E_i)$. The previous claim shows that $B_\rho (z) \cap H_0 \subseteq B_i$, up to possibly halving $\rho$. This shows that $U_\rho (z) \subseteq E_i^S$, that is (a) of \autoref{def:op}.

    We want to show (b) of \autoref{def:op}, that is the boundary measure is finite. We compute
    \begin{equation}
    \label{e: definition symmetrization proof total mass}
        \begin{split}
            \Ha^n&\left( \R^{n+1} \setminus \bigcup_{i=1}^N E_i^S \right) \leq \Ha^n \left(\R^{n+1}_+ \setminus  \bigcup_{i=1}^N E_i^S\right) + \Ha^n\left( \R^{n+1}_- \setminus \bigcup_{i=1}^N E_i^S\right)+\Ha^n\left( H_0\setminus \bigcup_{i=1}^N E_i^S\right)\\
            &= \Ha^n \left(\R^{n+1}_+ \setminus  \bigcup_{i=1}^N E_i\right) + \Ha^n\left( \R^{n+1}_- \setminus \bigcup_{i=1}^N \sigma(E_i)\right)+\Ha^n\left( H_0\setminus \bigcup_{i=1}^N B_i\right)<\infty\, ;
        \end{split}
    \end{equation}
    the first term is finite due to $\E\in \opn(\R^{n+1}_+)$ and it equals the second one since $\sigma$ is a diffeomorphism. Finally, the latter is finite by assumption.

    \noindent The countably $n$-rectifiablity can be showed decomposing $\bigcup_{i=1}^N \partial E_i^S$ as in \eqref{e: definition symmetrization proof total mass}.
    \end{proof}
\end{lemma}

\subsection{Symmetric admissible functions and volume controlled Lipschitz deformations}
\begin{definition} \label{def:E-admissible}
Given $\E=\set{E_i}_{i=1}^N \in \opns(\R^{n+1})$, a function $f \colon \R^{n+1} \to \R^{n+1}$ is \emph{symmetric $\E$-admissible} if it is Lipschitz continuous and satisfies the following. Let $\widetilde{E}_i \define \Int(f(E_i))$ for $i = 1,\dots,N$. Then:
\begin{itemize}
\item[(a)] $\set{\widetilde{E}_i}_{i=1}^N$ are mutually disjoint;
\item[(b)] $\R^{n+1} \setminus \bigcup_{i=1}^N \widetilde{E}_i \subset f\left(\bigcup_{i=1}^N \partial  E_i\right)$;
\item[(c)] $\sup_{x\in \R^{n+1}} \abs{f(x)-x}<\infty$;
\item[(d)] $\sigma(f(x))=f(\sigma(x))$ for every $x\in \R^{n+1}$.
\end{itemize}
\end{definition}
This class is non empty, as it contains the identity map.
\begin{lemma}(See \cite[Lemma $4.4$]{KimTonegawa})\label{l:preserving partitions}
For $\E = \set{\E_i}_{i=1}^N \in \opns(\R^{n+1})$, let $f$ be symmetric $\E$-admissible. If we define $\widetilde\E \define \{\widetilde E_{i}\}_{i=1}^N$ with $\widetilde E_i \define \Int(f( E_i))$, then $\widetilde\E \in \opns(\R^{n+1})$. 
\end{lemma}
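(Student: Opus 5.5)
The plan is to split the statement into two parts: the non-symmetric assertions, which come from \cite[Lemma $4.4$]{KimTonegawa}, and the symmetry of each $\widetilde E_i$, which is the only new content.

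First, $\E\in\opns(\R^{n+1})$ is in particular an element of $\opn$. A symmetric $\E$-admissible map $f$ satisfies conditions (a)--(c) of \autoref{def:E-admissible}, and these are exactly the $\E$-admissibility conditions of Kim--Tonegawa. Applying \cite[Lemma $4.4$]{KimTonegawa} then gives $\widetilde\E\in\opn$. Concretely, this gives the following:
\begin{itemize}
\item the $\widetilde E_i$ are open (being interiors) and mutually disjoint;
\item $\R^{n+1}\setminus\bigcup_i\widetilde E_i\subset f(\bigcup_i\partial E_i)$ has finite $\Ha^n$ measure, since $f$ is Lipschitz and $\Ha^n(\bigcup_i\partial E_i)<\infty$;
\item $\bigcup_i\partial\widetilde E_i$ is countably $n$-rectifiable, as a subset of the Lipschitz image of a countably $n$-rectifiable set.
\end{itemize}
If one prefers not to quote the lemma, each of these points is a short direct check in the same way.

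It remains to verify (d) of \autoref{def:op}, the symmetry of each $\widetilde E_i$. Since $E_i$ is symmetric, $\sigma(E_i)=E_i$. Condition (d) of \autoref{def:E-admissible} gives $f\circ\sigma=\sigma\circ f$, hence
\begin{equation*}
\sigma(f(E_i))=f(\sigma(E_i))=f(E_i).
\end{equation*}
Because $\sigma$ is a homeomorphism of $\R^{n+1}$ (indeed a linear isometry with $\sigma^{-1}=\sigma$), it commutes with taking interiors. Therefore
\begin{equation*}
\sigma(\widetilde E_i)=\sigma(\Int(f(E_i)))=\Int(\sigma(f(E_i)))=\Int(f(E_i))=\widetilde E_i,
\end{equation*}
so $x\in\widetilde E_i$ if and only if $\sigma(x)\in\widetilde E_i$.

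The only step requiring any care is the rectifiability and finiteness of the new boundary. There, by (b) of \autoref{def:E-admissible}, one should first note that $\partial\widetilde E_i\subset\R^{n+1}\setminus\bigcup_j\widetilde E_j$, using the openness and disjointness of the $\widetilde E_j$. This inclusion reduces the claim to the Lipschitz image statement above.
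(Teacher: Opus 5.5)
Your proposal is correct and follows the same route as the paper: the paper gives no proof beyond the reference to \cite[Lemma $4.4$]{KimTonegawa} for conditions (a)--(c) of \autoref{def:op}, and the only new ingredient, the symmetry of $\widetilde E_i$, follows exactly as you show from $f\circ\sigma=\sigma\circ f$, $\sigma(E_i)=E_i$, and the fact that the homeomorphism $\sigma$ commutes with taking interiors. You use the bound $\Ha^n(\bigcup_i\partial E_i)<\infty$ without stating why it holds; it comes from the inclusion $\bigcup_i\partial E_i\subset\R^{n+1}\setminus\bigcup_i E_i$ applied to $\E$ itself.
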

\begin{notation}
If $\E \in \opns$ and $f \in \Lip(\R^{n+1}; \R^{n+1})$ is a symmetric $\E$-admissible function, then the resulting open partition $\widetilde \E \in \opns$ will be denoted $f_{\star} \E$. 
\end{notation}
Among all the symmetric $\E$-admissible functions, we introduce a particular class.
\begin{definition}\label{ar8}
For $\E=\set{E_i}_{i=1}^N\in\opns(\R^{n+1})$, define
$\classEvc(\E, j,\operatorname{Sym})$ to be the set of all the functions $f\,\colon\,\R^{n+1}\to \R^{n+1}$ which are symmetric $\E$-admissible functions such that, writing $\setpiccolo{\widetilde E_i}_{i=1}^N\define 
f_{\star}\mathcal E$, the following hold:
\begin{itemize}
\item[(a)] $\abs{f(x)-x}\leq \frac{1}{j^2}$ for all $x\in\R^{n+1}$,
\item[(b)]
$\displaystyle \mathcal L^{n+1}(\widetilde E_i\triangle E_i)\leq \frac{\|\partial\mathcal E\|(\R^{n+1})-\|\partial f_{\star}\mathcal E\|(\R^{n+1})}{j} \ \  \mbox{for all} \ i=1,\ldots,N \, $;
\item[(c)] $\|\partial f_{\star}\mathcal E\|(\varphi)\leq \|\partial \E\|(\varphi)$
for all $\varphi\in \mathcal A_j\, $;
\item[(d)] $\Ha^n\left( H_0 \cap  \left(\bigcup_{i=1}^N \partial \widetilde E_i\right) \right)=0\, $.
\end{itemize}
Furthermore, we define 
\begin{equation}\label{ar1}
\begin{split}
\Deltajvc \norm{\partial\mathcal E}(\R^{n+1})&\define \inf_{f\in{ \classEvc(\mathcal{E}, j, \operatorname{Sym})}}
\set{\norm{\partial f_{\star}\E}(\R^{n+1})-\norm{\partial\mathcal E}( \R^{n+1})}\leq 0\, ,
\end{split}
\end{equation}
\end{definition}
\noindent This class may be empty in general, due to condition (d). However, if $\E=\set{E_i}_{i=1}^N$ satisfies $\Ha^n\left( H_0 \cap  \left(\bigcup_{i=1}^N \partial  E_i\right) \right)=0$, which will always be the case for us, the identity function belongs to this class.

\begin{lemma}
\label{lem: massive drop area implies admissible}
    Let $\E=\set{E_i}_{i=1}^N\in \opns$, $j\in \Na$, $C_1,\ldots,C_K$ be finitely many pairwise disjoint compact symmetric sets and let $f\colon \R^{n+1}\to \R^{n+1}$ be a symmetric $\E$-admissible function such that
    \begin{itemize}
        \item [(a)] $\set{x\, \colon \, f(x) \neq x}\cup \set{f(x) \, \colon \, f(x) \neq x}\subseteq \bigcup_{\l=1}^K C_\l \, $,
        \item [(b)] $\norm{\partial f_\star \E}(C_\l)\leq \exp(-j\, \mathrm{diam}\, \bigl( C_\l\cap \closedhalf\bigr)) \norm{\partial \E}(C_\l)$ for every $\l=1,\ldots,K\, $.
    \end{itemize}
    Then, we have $\|\partial f_{\star}\mathcal E\|(\varphi)\leq \|\partial \E\|(\varphi)$
for all $\varphi\in \mathcal A_j$.
\begin{proof}
    For any $\varphi \in \cA_j$, 
    \begin{equation*}
        \begin{split}
            \norm{\partial f_\star \E}(\varphi)- \norm{\partial \E}(\varphi) &= \sum_{\l=1}^K \norm{\partial f_\star \E}\mres{C_\l}(\varphi)- \norm{\partial \E}\mres{C_\l}(\varphi)\\
            &\leq \sum_{\l=1}^K \max_{C_\l} \varphi \, \norm{\partial f_\star E}(C_\l)- \min_{C_\l} \varphi \, \norm{\partial \E}(C_\l)\\
            &=\sum_{\l=1}^K \max_{C_\l\cap \R^{n+1}_{\ge 0}} \varphi \, \norm{\partial f_\star E}(C_\l)- \min_{C_\l\cap \closedhalf} \varphi \, \norm{\partial \E}(C_\l)\\
            &\leq \sum_{\l=1}^K \min_{C_\l} \varphi \left( \exp\left(j\, \mathrm{diam}\, \bigl(C_\l\cap \closedhalf\bigr)\right) \norm{\partial f_\star \E}(C_\l)- \norm{\partial \E}(C_\l)\right)\\
            &\leq 0\, ,
        \end{split}
    \end{equation*}
    where we used \eqref{e:Gronwall} in the penultimate line and (b) in the last line.
\end{proof}
\end{lemma}
\subsection{Smoothing of varifolds and first variations}
We let $\psi \in C^\infty(\R^{n+1})$ be a radially symmetric function such that 
\begin{equation*}
    \begin{split}
        &\psi(x)=1 \ \text{for} \ \abs x \leq \frac{1}{2}\, , \qquad \qquad  \psi(x)=0 \ \text{for} \ \abs x \ge 1\\
        & 0 \leq \psi(x) \leq 1\, , \quad \abs{ \nabla \psi(x)} \leq 3\, , \quad \normpiccolo{\nabla ^2 \psi(x)}\leq 9 \ \mathrm{for all} \ x\in \R^{n+1}\, ,
    \end{split}
\end{equation*}
and we define, for each $\eps \in (0,1)$,
\begin{equation*}
    \widehat \Phi_\eps(x) \define \frac{1}{\left( 2\pi \eps^2\right)^\frac{n+1}{2}} \exp\left(-\frac{\abs{x}^2}{2\eps^2}\right) 
    \end{equation*}
    and
    \begin{equation}
    \label{e:definition_Phi_eps}
    \Phi_\eps(x)\define c(\eps)\,  \psi(x) \, \widehat \Phi_\eps(x)\, ,
\end{equation}
where the constant $c(\eps)$ is chosen in such a way that 
\begin{equation*}
    \int_{\R^{n+1}} \Phi_\eps(x) \, dx=1\, .
\end{equation*}
\begin{definition}
Given $V\in \V_n(\R^{n+1})$, we introduce $\Phi_\eps \ast \|V\|$ as the measure on $\R^{n+1}$ defined by
\begin{equation*}
(\Phi_\eps \ast \norm V) (\varphi) \define \norm V (\Phi_\eps \ast \varphi) = \int_{\R^{n+1}} \int_{\R^{n+1}} \Phi_\eps (x-y) \, \varphi(y) \, dy \, d\norm V(x)
\end{equation*}
for all $\varphi \in C_c(\R^{n+1})$, identified with the smooth function 
\begin{equation*}
(\Phi_\eps\ast\norm V)(x) \define \int_{\R^{n+1}} \Phi_\eps(y-x) \, d\norm V(y)
\end{equation*}
by means of the identity
\begin{equation*}
(\Phi_\eps \ast \norm V) (\varphi) = \langle \Phi_\eps \ast \norm V\,,\,\varphi\rangle_{L^2(\R^{n+1})}\,.
\end{equation*}
Analogously, the smoothing of $\delta V$ by means of the convolution kernel $\Phi_\eps$ is the vector field $\Phi_\eps * \delta V \in C^\infty(\R^{n+1}, \R^{n+1})$ defined by
    \begin{equation*}
        (\Phi_\eps \ast \delta V)(x) \define  \int_{\bG_n(\R^{n+1})} S(\nabla \Phi_\eps(y-x)) \, dV(y,S)\, ,
    \end{equation*}
    in such a way that 
    \begin{equation*}
        \delta V(\Phi_\eps * g)= \pair{\Phi_\eps * V, g} \quad \forall \, g\in C^1_c(\R^{n+1}, \R^{n+1})\, .
    \end{equation*}
\end{definition}

\subsection{Tangential smoothed mean curvature vector }
Given $V\in \V_n(\R^{n+1})$ and $\eps \in (0,1)$ we define 
\begin{equation*}
    \widetilde \h_\eps(\cdot, V) \define - \frac{ \Phi_\eps \ast \delta V}{\Phi_\eps \ast \norm V + \eps }
\end{equation*}
and
\begin{equation*}
     \h_\eps(\cdot, V)\define \Phi_\eps \ast \widetilde \h_\eps\, .
\end{equation*}
If $V\in \V_n(\R^{n+1}_+)$, let $V^S=V+\sigma_\sharp V$ be its symmetrization and we further introduce 
\begin{equation*}
    \htildeepsfb(\cdot,V) \define - \frac{ \Phi_\eps \ast \delta V^S}{\Phi_\eps \ast \norm{ V^S} + \eps }
\end{equation*}
and 
\begin{equation*}
    \hepsfb(\cdot, V)\define \Phi_\eps \ast \htildeepsfb\, .   
\end{equation*}
We record some $L^\infty$ properties of the smoothed mean curvature vector, as in \cite[Lemma $5.1$]{KimTonegawa}.

\begin{lemma} \label{l:smc estimates}
For every $M > 0$, there exists a constant $\eps_1 \in \left( 0,1 \right)$, depending only on $n$, $M$ such that the following holds. Let $V \in \V_{n}(\R^{n+1})$ such that $\norm{V}(\R^{n+1}) \leq M$; then, for every $x\in \R^{n+1}$ and $\eps \in \left( 0, \eps_1 \right)$, we have:
\begin{gather}\label{e:h in L infty}
\abs{\h_\eps(x, V)} \leq  \eps^{-2}\,,\\
\notag \norm{\nabla \h_\eps(x,V)} \leq  \eps^{-4}\,,\\
\notag \norm{ \nabla^2 \h_\eps(x,V) } \leq \eps^{-6}\,.
\end{gather}
\end{lemma}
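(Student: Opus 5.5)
The plan is to estimate the convolutions directly from the explicit form of the kernel, following \cite[Lemma 5.1]{KimTonegawa}. No symmetry enters here: the statement concerns $\h_\eps$ of an arbitrary $V\in\V_n(\R^{n+1})$ with $\norm V(\R^{n+1})\le M$.

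The first step is to bound the kernel and its derivatives. Since $c(\eps)\to 1$ as $\eps\to0$, and since $\psi$ has bounded derivatives up to order two and is supported in the unit ball, we obtain for $\eps$ small
\begin{equation*}
\sup\Phi_\eps\le c_n\eps^{-(n+1)},\qquad \sup\abs{\nabla\Phi_\eps}\le c_n\eps^{-(n+2)},\qquad \sup\normpiccolo{\nabla^2\Phi_\eps}\le c_n\eps^{-(n+3)},
\end{equation*}
together with
\begin{equation*}
\int\abs{\nabla^k\Phi_\eps}\,dx\le c_n\eps^{-k}\quad (k=0,1,2).
\end{equation*}
The $L^1$ bounds come from scaling of the Gaussian, plus the terms in which derivatives fall on $\psi$; those terms are exponentially small on $\{\abs x\ge1/2\}$.

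The second step controls $\widetilde\h_\eps$ pointwise. Since $\abs{S(\nabla\Phi_\eps)}\le\abs{\nabla\Phi_\eps}$, we have $\abs{\Phi_\eps\ast\delta V}(x)\le M\sup\abs{\nabla\Phi_\eps}\le c_nM\eps^{-(n+2)}$. The denominator is at least $\eps$, so $\abs{\widetilde\h_\eps}\le c_nM\eps^{-(n+3)}$.

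The third step produces the final bounds. We write $\h_\eps=\Phi_\eps\ast\widetilde\h_\eps$ and put all derivatives on $\Phi_\eps$:
\begin{equation*}
\abs{\nabla^k\h_\eps}\le\norm{\widetilde\h_\eps}_{L^\infty}\norm{\nabla^k\Phi_\eps}_{L^1}\le c_nM\eps^{-(n+3)-k}.
\end{equation*}
This crude route gives exponents $n+3$, $n+4$, $n+5$, not $2$, $4$, $6$. So the main obstacle is to remove the $n$-dependence from the exponent, which requires a finer estimate of $\widetilde\h_\eps$.

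To get that finer estimate, I would use the Gaussian structure. We have $\nabla\widehat\Phi_\eps(y-x)=-\eps^{-2}(y-x)\widehat\Phi_\eps(y-x)$. On $\{\abs z\le1/2\}$, $\abs z\widehat\Phi_\eps(z)$ is bounded by $\eps^{-1}\sqrt{\widehat\Phi_\eps(z)}\cdot\sup\bigl(\abs z\eps^{-1}\sqrt{\widehat\Phi_\eps(z)}\bigr)$, but this does not close directly. Instead I will split $\abs{y-x}$ at the threshold $\eps\sqrt{2(n+3)\log(1/\eps)}$.

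On the inner region, $\abs{\nabla\Phi_\eps}\le C\eps^{-1}\sqrt{\log(1/\eps)}\,\Phi_\eps$. This gives a contribution to the numerator bounded by $C\eps^{-1}\sqrt{\log(1/\eps)}\,(\Phi_\eps\ast\norm V)(x)$, so that part of the quotient is at most $C\eps^{-1}\sqrt{\log(1/\eps)}\le\eps^{-3/2}$.

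On the outer region, $\widehat\Phi_\eps\le c\,\eps^{-(n+1)}\eps^{n+3}=c\,\eps^{2}$. Its contribution to the numerator is therefore at most $CM\eps^{2}\cdot\eps^{-2}\le CM$. The cutoff terms where derivatives fall on $\psi$ are exponentially small. Dividing by $\eps$ gives $CM\eps^{-1}$.

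Combining the two regions, $\abs{\widetilde\h_\eps}\le\eps^{-3/2}$ once $\eps<\eps_1(n,M)$. Then $\h_\eps=\Phi_\eps\ast\widetilde\h_\eps$ gives $\abs{\h_\eps}\le\eps^{-3/2}\le\eps^{-2}$, and the $L^1$ kernel bounds give $\norm{\nabla\h_\eps}\le c_n\eps^{-5/2}\le\eps^{-4}$ and $\norm{\nabla^2\h_\eps}\le c_n\eps^{-7/2}\le\eps^{-6}$. The constants $c_n$ and $M$ are absorbed by the spare powers of $\eps$ when choosing $\eps_1$ small.
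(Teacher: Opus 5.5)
Your proof is correct. The paper gives no proof of this lemma and only cites \cite[Lemma 5.1]{KimTonegawa}. Your skeleton is the one used there: an $L^\infty$ bound on $\widetilde\h_\eps$ from the ratio structure, after which all derivatives of $\h_\eps=\Phi_\eps\ast\widetilde\h_\eps$ are moved onto the kernel. Where you differ is in how you dominate $\nabla\Phi_\eps$ by $\Phi_\eps$.

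The direct route needs only that $\abs z\le 1$ on $\spt\psi$. Since $\nabla\widehat\Phi_\eps(z)=-\eps^{-2}z\,\widehat\Phi_\eps(z)$, one has $\abs{\nabla\Phi_\eps(z)}\le\eps^{-2}\abs z\,\Phi_\eps(z)+c(\eps)\,\widehat\Phi_\eps(z)\abs{\nabla\psi(z)}$. The last term is supported in $\{1/2\le\abs z\le 1\}$ and is $O(\eps^{-n-1}e^{-1/(8\eps^2)})$. Hence $\abs{\Phi_\eps\ast\delta V}\le\eps^{-2}\,\Phi_\eps\ast\norm V+O(M\eps^{-n-1}e^{-1/(8\eps^2)})$. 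Dividing by $\Phi_\eps\ast\norm V+\eps$ gives $\abs{\widetilde\h_\eps}\le\eps^{-2}$ for small $\eps$, with no logarithmic threshold; on $\{\abs z<1/2\}$ one even gets a factor $\tfrac12$ that absorbs the exponentially small error. The analogous bound $\normpiccolo{\nabla^2\Phi_\eps(z)}\le(\eps^{-4}\abs z^2+c_n\eps^{-2})\Phi_\eps(z)+\ldots$, used crudely, is what produces the exponents $4$ and $6$ in the statement. So your remark that the direct estimate ``does not close'' is mistaken: it closes at once with exactly the exponent required.

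Your splitting at $\abs z\sim\eps\sqrt{\log(1/\eps)}$, together with the scaling $L^1$ bounds on $\nabla^k\Phi_\eps$, costs an extra case distinction. In exchange it gives a strictly sharper estimate, $\abs{\widetilde\h_\eps}\le C\eps^{-1}\bigl(\sqrt{\log(1/\eps)}+M\bigr)$. This is precisely the improved bound $\abs{\h_\eps}\le\eps^{-1-\delta}$ that the paper asserts without proof in \autoref{appendix: non flat domains}.
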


\noindent We now state a key lemma showing that the approximate mean curvature associated to a symmetric partition is really symmetric, despite the presence of convolutions. This is where the flatness of $H_0$ plays a crucial role, as we will discuss in \autoref{appendix: non flat domains}.
\begin{lemma}
\label{lem: symmetry of mean curvature}
    Let $V\in \V_n(\R^{n+1}_+)$ and $V^S=V+\sigma_\sharp V\in \V_n(\R^{n+1})$ its symmetrization. Then, for any $x\in \R^{n+1}$ we have
    \begin{equation*}
        \sigma( \htildeepsfb (x))= \htildeepsfb (\sigma(x))
    \end{equation*}
    and
    \begin{equation*}
   \sigma(\hepsfb(x)=\hepsfb(\sigma(x))\, .
    \end{equation*}
    \begin{proof}
        By \eqref{e: integration on the reflection}, we have 
        \begin{align*}
         \left(\Phi_\eps\ast \delta V^S \right)(\sigma (x))&=\int_{\bG_n(\R^{n+1})} S'\left[ \nabla \Phi_\eps(y-\sigma(x)\right] \, dV^S(y,S')\\
         &= \int_{\bG_n(\R^{n+1})} S\left[ \nabla \Phi_\eps (y-\sigma(x))\right]+ Q\,S\,Q\left[ \nabla \Phi_\eps (\sigma(y)-\sigma(x))\right] dV(y,S)\\
         &=\vcentcolon I_1+I_2
        \end{align*}
        and 
        \begin{align*}
        Q\left[ \left(\Phi_\eps \ast \delta V^S\right)(x) \right] &=\int_{\bG_n(\R^{n+1})} Q \left[ S'\left[ \nabla \Phi_\eps (y-x) \right]\right] \, dV^S (y,S')\\
        &= \int_{\bG_n(\R^{n+1})} Q \left[ S\left[ \nabla \Phi_\eps (y-x) \right]\right] \, d V (y,S) \\
        & \quad +\int_{\bG_n(\R^{n+1})} Q \left[ Q\,S\, Q\left[ \nabla \Phi_\eps (\sigma(y)-x) \right]\right] \, d V (y,S)\\
        &= \int_{\bG_n(\R^{n+1})} Q \left[ S\left[ \nabla \Phi_\eps (y-x) \right]\right] + S\left[Q\left[ \nabla \Phi_\eps (\sigma(y)-x) \right]\right] \, d V (y,S)\\
        &=\vcentcolon I_3+I_4\, .
        \end{align*}
        We claim that $I_1=I_4$ and $I_2=I_3.$ By definition of the kernel \eqref{e:definition_Phi_eps},
        \begin{equation}
        \label{e: gradient kernel}
            \nabla \Phi_\eps (\sigma(y)-x)= c(\eps) \left(\widehat \Phi_\eps(\sigma(y)-x) \nabla \psi(\sigma(y)-x)+ \psi(\sigma(y)-x) \nabla \widehat \Phi_\eps(\sigma(y)-x)\right)\, .
        \end{equation}
        Since both $\psi$ and $\widehat \Phi_\eps$ are radially symmetric functions and $\sigma$ is an isometry, we have
        \begin{equation*}
            \widehat \Phi_\eps(\sigma(y)-x)= \widehat \Phi_\eps( \sigma( \sigma(y)-x))) = \widehat \Phi_\eps (y-\sigma(x))\, 
             \end{equation*}
             and similarly
             \begin{equation*}
                 \psi(\sigma(y)-x)= \psi(y-\sigma(x)).\, 
             \end{equation*}
     Note that for any radially symmetric function of the form $\eta(x)= g(\abs{x})$, it holds 
     \begin{equation*}
\nabla \eta(x)= g'(\abs{x})\, \frac{x}{\abs{x}}= g'(\abs{\sigma(x)}) \, \frac{x}{\abs{\sigma(x)}}
     \end{equation*}
     and 
     \begin{equation}
     \label{e: simmetry of gradient}
         Q\left[ \nabla \eta (x)\right] =g'(\abs{\sigma(x)}) \, \frac{Q[x]}{\abs{\sigma(x)}}=\nabla \eta(\sigma (x))\, .
     \end{equation}
     By \eqref{e: gradient kernel}-\eqref{e: simmetry of gradient} (applied to both $\psi$ and $\widehat \Phi_\eps)$, we have 
     \begin{equation*}
         Q\left[ \nabla \Phi_\eps(\sigma(y)- x)\right]= \nabla \Phi_\eps(y- \sigma(x))\, 
     \end{equation*}
     yielding that $I_1=I_4$. A similar argument allows to show that also $I_2=I_4$. Thus, we have
     \begin{equation}
     \label{e: symmetry of variation}
         Q\left[\left(\Phi_\eps \ast \delta V^S\right) (x)\right] = \left(\Phi_\eps \ast \delta V^S\right)(\sigma(x))\, .
     \end{equation}
   Moreover,
     \begin{equation}
     \label{e: symmetry of mass}
     \begin{split}
        \Phi_\eps \ast \normpiccolo{V^S}(x)&= \int_{\R^{n+1}} \Phi_\eps(y-x)+ \Phi_\eps(\sigma(y)-x) \, d\norm{V}(y) \\
        &= \int_{\R^{n+1}} \Phi_\eps(\sigma(y)-\sigma(x))+ \Phi_\eps(y-\sigma(x)) \, d\norm{V}(y)\\
        &= \Phi_\eps \ast \normpiccolo{V^S}(\sigma(x))\, .
        \end{split}
     \end{equation}
     \eqref{e: symmetry of variation} and \eqref{e: symmetry of mass} show that 
     \begin{equation*}
         Q\left[ \htildeepsfb(x)\right] = \htildeepsfb (\sigma(x))
     \end{equation*}
     and then, by linearity, also 
     \begin{equation*}
         Q\left[ \hepsfb (x)\right]= \hepsfb (\sigma(x))\, .
     \end{equation*}
    \end{proof}
\end{lemma}

\section{Existence of a symmetric Brakke flow in $\R^{n+1}$}
In this section we show how to construct a symmetric Brakke flow in $\R^{n+1}$ starting from the symmetrization of the initial datum $\Gamma_0\subset \R^{n+1}_+$, by suitably modifying the approximation scheme developed in \cite{KimTonegawa, STCanonical}. The rectifiability and the integrality of the solution will be addressed in the next sections. 
\subsection{The construction of the approximate flow}
Consider an initial rectifiable set $\Gamma_0\subset \R^{n+1}_+$ with a corresponding finite open partition $\E_0$ of $\R^{n+1}_+$ consisting of $N$ elements as in \autoref{ass:main}; symmetrize it as described in \autoref{def: symmetrization of grains} to obtain a symmetric open partition of $\R^{n+1}$ (see \autoref{lem: symmetrization is opn}).

For every natural number $j$ and for times $t \in \left[0,j\right]$, we define open partitions $\E_j^S (t) = \{E_{j,1}^S(t),\ldots,E_{j,N}^S(t)\}$ according to the following rule: given the initial datum $\E_0^S\in \opns(\R^{n+1})$, we set
\begin{eqnarray} \label{app:0} 
    \E_j^S(0)&=&\E_0^S \,, \\ \label{app:future}
    \E_j^S(t) &=& \E_{j,k}^S \qquad \mbox{for all $t \in \left( (k-1) \Delta t_j, k \Delta t_j \right]$}\,.
\end{eqnarray}
In \eqref{app:future}, the epoch length is $\Delta t_j = 2^{-p_j}$ for some $p_j \in \Na$, and $k\in\{1,\ldots,j\,2^{p_j}\}$. For each $k$, the open partition $\E_{j,k}^S$ is obtained from the open partition $\E_{j,k-1}^S$ (with the convention $\E_{j,0}^S = \E_0^S$) through successive modifications, encoded in the following two-step algorithm:
\begin{itemize}
    \item[(1)] First, one chooses $f_1\in \E^{vc}(\E_{j,k-1}^S, j, \operatorname{Sym})$ with the property that
\begin{equation}
\label{e: def f1}
\normpiccolo{\partial (f_1)_{\star} \E_{j,k-1}^S }(\R^{n+1}) - \normpiccolo{ \partial \E_{j,k-1}^S}(\R^{n+1}) \leq  (1-j^{-5}) \, \Deltajvc \normpiccolo{ \partial \E_{j,k-1}^S}(\R^{n+1})\,,
\end{equation}
and sets
\begin{equation*}
(\E^S_{j,k})^\ast \define (f_1)_\star(\E_{j,k-1}^S)\, ;
\end{equation*}
thus, in particular,
\begin{equation*}
(E^S_{j,k,i})^\ast \define {\rm int}(f_1(E_{j,k-1,i})) \quad \mbox{for every $i \in \set{1,\ldots,N}$}\,.
\end{equation*} 

\item[(2)] Next, one defines the map

\begin{equation}
\label{e: def f2}
 f_2(x) \define x + \Delta t_j \, \hepsfb(x)\,,
\end{equation} 
where $\eps_j\in (0,1)$, $\hepsfb(x)= \h_\eps\left(x, \partial (\E_{j,k}^S)^\ast\right)$ is the $\eps_j$ smoothed mean curvature vector of the multiplicity one varifold $\pa(\E^S_{j,k})^\ast$. Notice that $ f_2$ is a diffeomorphism of $\R^{n+1}$ due to \autoref{l:smc estimates} as soon as $\Delta t_j \ll \eps_j^4$. We set
 \begin{equation*}
 \E_{j,k}^S \define (f_2)_{\star}(\E^S_{j.k})^\ast\, .
 \end{equation*}
\end{itemize}
\begin{remark}
    The symmetrization process does not take place at every step, but rather only at the beginning of this procedure. The symmetry is indeed preserved along the iteration, as we will discuss in \autoref{prop: symmetry along iteration}.
\end{remark}
\begin{theorem}
\label{theorem: existence of limit symmetric measures}
There is a constant $c_2=c_2(n) \gg 1$ with the following property. Let $\Gamma_0\subset \R^{n+1}_+$ and $\E_0 \in \opn(\R^{n+1}_+)$ be as in \autoref{ass:main} and let $\E_0^S$ be the symmetrization of $\E_0$. Then there exist 
\begin{itemize}
\item a subsequence $j_\l$ of $\Na\, $,
\item reals $\eps_{j_\l} \in \left( 0, j_\ell^{-6}\right)$ with $\lim\limits_{\ell\to\infty} \eps_{j_\ell}=0\,$,
\item integers $p_{j_\ell} \in \mathbb N$ with $\Delta t_{j_\ell}=2^{-p_j} \in \left( 2^{-1} \eps_{j_\ell}^{c_2}, \eps_{j_\ell}^{c_2} \right]\,$,
\item a family $\{\mu_t^S\}_{t \ge 0}$ of symmetric Radon measures on $\R^{n+1}$,
\item a family $\E(t)^S=\{E_1^S(t),\ldots,E_N^S(t)\}_{t \ge 0}\,$ of symmetric open sets
\end{itemize}
such that the approximating flow of open partitions $\E_{j_\ell}^S(t)$ defined by \eqref{app:0}-\eqref{app:future} satisfies for all $T < \infty$,
\begin{align*}
    &\limsup_{\ell \to \infty} \sup_{t \in \left[0,T\right]} \normpiccolo{ \partial \E_{j_\ell}^S(t)}(\R^{n+1}) \leq \normpiccolo{ \partial\E_0^S}(\R^{n+1})\, ,\\
    &\limsup_{\ell\to\infty} \bigint{0}^T \left( \bigint{\R^{n+1}} \frac{\big| \Phi_{\eps_{j_\ell}} \ast \delta (\partial \E_{j_\ell}^S(t))\big|^2}{\Phi_{\eps_{j_\ell}} \ast \normpiccolo{\partial \E_{j_\ell}^S(t)} + \eps_{j_\ell}} \, dx - \frac{1}{\Delta t_{j_\ell}}\, \Delta_{j_\ell}^{vc}\normpiccolo{\partial \E_{j_\ell}^S(t)}(\R^{n+1}) \right) \, dt < \infty \,,    \\
    &\lim_{\ell\to\infty} j_\ell^{2(n+1)} \,\Delta_{j_\ell}^{vc} \normpiccolo{\partial\E_{j_\ell}^S(t)} (\R^{n+1}) = 0 \quad \mbox{for a.e.} \ t \in \R_+\,,\\
    &\lim_{\ell\to\infty} \normpiccolo{ \partial \E_{j_\ell}^S(t)} (\varphi) = \mu_t^S (\varphi) \quad \mbox{for all $\varphi \in C_c (\R^{n+1})$ and any $t \in \R^+$}\,, \\
    &  \mbox{$\chi_{E^S_{j_\ell,i}(t)} \to \chi_{E^S_i(t)^S}$ in $L^1_{{\rm loc}}(\R^{n+1})$ as $\ell \to \infty$ for every $i \in \{1,\ldots,N\}$ and any $t \in\R^+$}\,.
\end{align*}
Furthermore, the following assertions hold:
\begin{itemize}
    \item[(a)] There exists a subset $Z \subset \R_+$ with $\Leb^1(Z)=0$ such that, for every $t \in \R_+\setminus Z$, $\mu_t^S$ is integer rectifiable in $\R^{n+1}$, with even density for $\normpiccolo{\mu_t^S}-\mbox{a.e.} \ x\in H_0$;
    \item[(b)] If $V_t^S$ is defined to be an arbitrary varifold in $\V_n(\R^{n+1})$ with $\normpiccolo{V_t^S}=\mu_t^S$ also for $t \in Z$, then the family $\setpiccolo{V_t^S}_{t \ge 0}$ satisfies 
    \begin{itemize}
        \item[(b1)] $\normpiccolo{V_0^S}=\Ha^n\mres{\Gamma_0^S}$, where $\Gamma_0^S$ is the symmetrization of $\Gamma_0$;
        \item[(b2)] $\displaystyle \int_0^\infty \int_{\R^{n+1}} \abs{\h(\cdot, V_w^S)}\, d\norm{V^S_w}  dw<\infty \quad \mbox{for all} \ t>0$,
        \item[(b3)] $\displaystyle \normpiccolo{V_t^S}(\R^{n+1})+\int_0^t \int_{\R^{n+1}} \abspiccolo{\h(\cdot, V_w^S)}^2\, d\normpiccolo{V_w^S}\, dw\leq \Ha^n(\Gamma_0^S)$ for all $t>0$;
        \item[(b4)] $\setpiccolo{V_t^S}_{t\ge 0}$ is a Brakke flow in $\R^{n+1}$;
    \end{itemize}
    \item[(c)] Setting 
    \begin{equation*}
        \Gamma^S(t)\define \R^{n+1} \setminus \bigcup_{i=1}^N E^S_i(t)\,,
    \end{equation*}
    the flow of grains $E_i^S(t)$ satisfies the following:
    \begin{itemize}
        \item[(c1)] $E_i^S(0)=E_{0,i}^S$ for $i=1,\ldots,N$.
        \item[(c2)] $E_1^S(t),\ldots, E_N^S(t)$ are pairwise disjoint for $t\in \R_+$.
        \item[(c3)] For every $0<T<\infty$, the families $\{E^S_i(t)\}_{t\in[0,T)}$ define a generalized $\mathrm{BV}$ flow in $\R^{n+1}$ with respect to symmetric tests, with scalar velocities $v_i^S$ given by
        \begin{equation*}
            v_i^S(x,t)= \h(x,V^S_t)\cdot \nu_i(x,t)\, ,
            \end{equation*}
        where $\nu_i(x,t)$ is the outer unit normal to $E_i^S(t)$ at the point $x$.
        \item[(c4)] Suppose that, for $0\leq t_1<t_2<\infty$, the Brakke flow is unit density for $t\in(t_1,t_2)$. Then, the families $\{E^S_i(t)\}_{t\in[0,T)}$ actually define a $\mathrm{BV}$ flow in $\R^{n+1}$ with respect to symmetric tests (not only \emph{generalized}).
    \end{itemize}
    \item[(d)] If $\Ha^n(\partial E_{0,i}^S \setminus \partial^\ast E_{0,i}^S)=0$, then $\lim\limits_{t\to 0^+} \normpiccolo{V_t^S}=\normpiccolo{V_0^S}$.
\end{itemize}
\end{theorem}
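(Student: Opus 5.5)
The plan is to run the Kim--Tonegawa / Stuvard--Tonegawa existence machinery essentially verbatim, checking at each stage that the symmetric modifications preserve the estimates, and then to argue the evenness of the density on $H_0$ separately.

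\textbf{Symmetry along the iteration.} First I would check that every $\E_{j,k}^S$ belongs to $\opns$ and satisfies $\Ha^n(H_0\cap\bigcup_i\partial E^S_{j,k,i})=0$. This guarantees that $\classEvc(\cdot,j,\operatorname{Sym})\ni\mathrm{id}$, so the first step is well defined.
\begin{itemize}
\item For $f_1$, symmetry holds by condition (d) of symmetric admissibility together with \autoref{l:preserving partitions}, and condition (d) of \autoref{ar8} is built in.
\item For $f_2$, \autoref{lem: symmetry of mean curvature} gives $\sigma\circ f_2=f_2\circ\sigma$. Since $f_2$ is a diffeomorphism with $f_2(H_0)=H_0$ (tangential velocity), it preserves the null set on $H_0$.
\item At $k=0$, (A3) gives $\Ha^n(H_0\cap\partial E^S_{0,i})=0$.
\end{itemize}

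\textbf{A priori estimates and limits.} Next I would reproduce the estimates of \cite[Sections 5--6]{KimTonegawa}:
\begin{itemize}
\item the mass decrease and the $L^2$ bound on the smoothed curvature along $f_2$, using \autoref{l:smc estimates};
\item the estimate on $\Deltajvc$ along $f_1$;
\item the monotonicity $\norm{\partial\E}(\varphi)$ essentially non-increasing for $\varphi\in\cA_j$, with symmetric $\varphi$.
\end{itemize}
These estimates never use the specific form of the admissible competitors, only their existence and the test classes. I expect no change beyond replacing $\cA_j,\cB_j$ with their symmetric versions. The point to verify is that the key inequality relating $\Phi_\eps*\delta V$ and the first variation still holds when tested only against symmetric $g$. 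This is fine because $\hepsfb$ itself is symmetric, so all the quantities that appear are symmetric.

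With these in hand, I would choose a diagonal subsequence via compactness: Radon measures for dyadic times, then the Huisken-type monotonicity to extend to all $t$, and BV compactness for the $\chi_{E^S_{j,i}(t)}$. The limits are symmetric by \autoref{lemma: limit of symmetric is symmetric}.

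\textbf{Rectifiability, Brakke inequality and BV flow.} Brakke's inequality (b4) follows as in \cite{KimTonegawa} for symmetric tests. For a general $\varphi$, I would write $\varphi=\tfrac12(\varphi+\varphi\circ\sigma)+\tfrac12(\varphi-\varphi\circ\sigma)$. The antisymmetric part integrates to zero against the symmetric measures and the symmetric curvature, so the inequality extends to all $\varphi$.

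The main obstacle is rectifiability and integrality (a), together with the even density on $H_0$. Kim--Tonegawa prove these by contradiction. If $\mu_t$ fails to be rectifiable, or has a non-integer density at a point, they construct an area-reducing Lipschitz deformation in small balls, which contradicts $j^{2(n+1)}\Deltajvc\to0$. Here the competitors must be symmetric, so they must be built in pairs of balls $B$, $\sigma(B)$ away from $H_0$; this is harmless by \autoref{lem: massive drop area implies admissible}, which controls the $\cA_j$ condition with diameters measured in $\closedhalf$. Near $H_0$ one needs deformations symmetric with respect to $H_0$ that also keep $H_0$ $\Ha^n$-null in the new boundary.

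I would blow up at points of $H_0$. The tangent objects are symmetric, and a symmetric deformation of a symmetric almost-flat configuration reduces mass whenever the multiplicity is non-integer or odd. The reason is that odd multiplicity on $H_0$ would force a sheet lying in $H_0$, and such a sheet can be crushed by a symmetric retraction. This yields integer even density on $H_0$.

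Finally, (c1)--(c4) and (d) I would obtain as in \cite{STCanonical}, by passing to the limit in the discrete volume relations for the grains using symmetric tests.
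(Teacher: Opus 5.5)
\textbf{Where the proposal matches the paper.} Your treatment of the iteration, the a priori estimates and compactness, the symmetry of the limits, Brakke's inequality and the BV statements agrees with the paper. Splitting $\varphi$ into symmetric and antisymmetric parts is equivalent to the paper's testing with $\varphi+\varphi\circ\sigma$. One small correction: tangentiality of $\hepsfb$ only gives $f_2(H_0)\subseteq H_0$. What you need is $f_2^{-1}(H_0)=H_0$, which follows from injectivity together with $\sigma\circ f_2=f_2\circ\sigma$, as in the paper.

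\textbf{The gap: rectifiability near $H_0$.} Part (a) is the one part that cannot be copied from \cite{KimTonegawa}, and there you name the difficulty without resolving it. Near $H_0$ the balls $B_r(z)$ and $B_r(\sigma(z))$ overlap, and you give no symmetric area-reducing competitor there. The paper's \autoref{prop: area reducing symmetric radial projection} builds one on $B^S_r(z)=B_r(z)\cup B_r(\sigma(z))$ with $z\in\closedhalf$:
\begin{itemize}
\item the relative isoperimetric inequality, at the cost of a factor $2$, yields a grain $E^S_{i_0}$ occupying most of $B^S_R(z)$;
\item slicing selects a good radius $r$;
\item $f$ radially projects from $x_0\in E^S_{i_0}\cap\R^{n+1}_+$ (and from $\sigma(x_0)$ below) onto $\partial B^S_r(z)$, with rays stopped at $H_0$.
\end{itemize}
Then $H_0\cap U^S_r(z)$ becomes interior to the enlarged grain, and the only new boundary on $H_0$ is the $(n-1)$-dimensional set $H_0\cap\partial B^S_r(z)$.

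\textbf{The gap: integrality and even density.} The argument you sketch does not work as stated. No sheet ever lies in $H_0$: $\norm{\partial\E^S_{j,k}}(H_0)=0$ at every step, by (A3) and condition (d) of $\classEvc$, so there is nothing to crush. Also, a blow-up of the limit at a point of $H_0$ is just $d\,|H_0|$, which says nothing about $d$; any contradiction must come from competitors for $\Deltajvc$ acting on the approximating partitions.

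In \cite{KimTonegawa}, integrality comes from two ingredients:
\begin{itemize}
\item a lower bound (Lemmas 8.1 and 8.5 there): near a set $Y$ of unit-density points on a line orthogonal to the tangent plane, a hole-expansion competitor shows the mass is at least that of $\Ha^0(Y)$ discs;
\item counting the points of a good set $G^{\ast\ast}_\ell$ on such lines.
\end{itemize}
In the symmetric setting evenness then comes for free. Since the approximants charge no mass on $H_0$, $G^{\ast\ast}_\ell$ avoids $H_0$, so its points on each vertical line come in $\sigma$-pairs. Let $2\nu$ be the least even integer exceeding $d$. A count above $2\nu-2$ would produce a symmetric $Y$ with $2\nu$ points, contradicting $\lambda^{n+1}d<2\nu$. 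Projecting onto $H_0$ then gives $d\le 2\nu-2$, hence $d=2\nu-2$.

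\textbf{The obstacle you do not address.} Near $H_0$ the lower bound must be proved for \emph{symmetric} $Y$, since otherwise the hole-expansion map is not symmetric. But Kim--Tonegawa need $\mathrm{diam}\,Y<j^{-2}$ to verify condition (c) of $\classEvc$ for $\varphi\in\cA_j$, and a symmetric $Y$ containing points far from $H_0$ cannot be that small. The paper handles this in two steps:
\begin{itemize}
\item It replaces the condition by $\mathrm{diam}\,Y_+=\mathrm{diam}\,Y_-<j^{-2}$. The support of the deformation is then a symmetric set whose part in $\closedhalf$ is small, so \autoref{lem: massive drop area implies admissible} applies. This is the tool you invoke for pairs of balls, but it only helps once $Y$ has been decomposed appropriately.
\item It proves symmetric versions of Allard's splitting lemma and of Kim--Tonegawa's Lemma 8.4 (\autoref{lemma4.21}, \autoref{lemma 8.4}). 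These cut $Y$ at the first gap from the bottom into $Y_1$ and $Y_2=\sigma(Y_1)$, both away from $H_0$, plus a symmetric middle cluster, so every symmetric $Y$ reduces to clusters with small halves.
\end{itemize}
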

The proof of the existence of this flow can be mostly carried out by repeating \textit{verbatim} the arguments in \cite{KimTonegawa, STCanonical}. We only need to show that the symmetry is preserved throughout the approximating procedure, yielding symmetric limit varifolds, and that rectifiability and integrality can be reproduced in this setting. Indeed, as we mentioned in the introduction, the original proofs require the application of some particular tailor-made area reducing Lipschitz maps, which may not be symmetric in general. 

\subsection{Symmetry along the iteration}
We show that the varifolds associated to the approximating open partitions are symmetric and thus any limiting measure of such objects must be symmetric too by \autoref{lemma: limit of symmetric is symmetric}.
\begin{proposition}
\label{prop: symmetry along iteration}
    Consider the time-discrete procedure defined by \eqref{app:0}-\eqref{app:future}. Then, for any $j,k,i$, the set $E_{j,k,i}^S$ is symmetric with respect to $H_0$ and
        \begin{equation}
    \label{e: no mass on the boundary along iteration}
    \Ha^n(H_0 \cap \partial\E^S_{j,k})=0\, .
    \end{equation}
    In addition, if we denote by $\E_{j,k}=\set{E_{j,k,i}}_{i=1}^N\in \opn(\R^{n+1}_+)$, where $E_{j,k,i}\define E_{j,k,i}^S\cap \R^{n+1}_+$, we have 
    \begin{equation}
    \label{e: symmetry of varifolds along iteration}
        \partial \E_{j,k}^S = \partial \E_{j,k} + \sigma_\sharp( \partial \E_{j,k})\, .
    \end{equation}
  \begin{remark}
      In \eqref{e: symmetry of varifolds along iteration} we are regarding $\E_{j,k}$ as a partition of $\R^{n+1}_+$, thus considering the boundaries of the grains \emph{inside} $\R^{n+1}_+$.
  \end{remark}
    \begin{proof}
If $k=0$, the sets $E_{j,0,i}^S$ are symmetric by construction and \eqref{e: no mass on the boundary along iteration} is given by \autoref{ass:main} $(A3)$. In fact, if $x\in B_{0,i}$ for some $i$, by \autoref{def: symmetrization of grains} and \autoref{lem: symmetrization is opn}, $x$ is an interior point of $E_{0,i}^S$, and it cannot be a point of $\partial \E^S_{0}$. Hence,
\begin{equation*}
     \Ha^n(H_0 \cap \partial\E^S_{0})\leq \Ha^n\left(H_0\setminus \left(\bigcup_{i=1}^N B_{0,i} \right)\right)=0.
\end{equation*}
Given the conclusions for the step $k-1$, we show their validity for the step $k$. Since by inductive hypothesis $\Ha^n(H_0 \cap \partial\E^S_{j,k-1})=0$, there exists at least one function $f\in \E^{\mbox{vc}}(\E_{j,k-1}^S, j, \operatorname{Sym})$. Any almost minimizer $f_1$ as in \eqref{e: def f1} preserves the symmetry by \autoref{def:E-admissible} (d) and charges no boundary measure on $H_0$ by \autoref{ar8} (d), proving \eqref{e: no mass on the boundary along iteration} for the intermediate step $(\E^S_{j,k})^\ast$. 

By \autoref{lem: symmetry of mean curvature}, the map $f_2$ defined in \eqref{e: def f2} satisfies $f_2(x)=\sigma(f_2(\sigma(x)))$. Moreover, we claim that $f_2(\R^{n+1}_+)\subseteq \R^{n+1}_+$ and $f_2(H_0)\subseteq H_0$. Indeed, if, for the sake of a contradiction, there is some $z\in \R^{n+1}_+$ such that $f_2(z)\in H_0$, then $f_2(\sigma(z)) =\sigma(f_2(z))= f_2 (z),$ violating the injectivity of $f_2$. By continuity, we get the claim. Since $f_2$ is a diffeomorphism, for any set $A$, $x\in \partial A$ if and only $f_2(x)\in \partial f(A)$; together with the Lipschitzianity of $f_2$ and the previous claim, this shows the thesis.

Finally, \eqref{e: symmetry of varifolds along iteration} is an immediate consequence of the symmetry of the grains and \eqref{e: no mass on the boundary along iteration}.
    \end{proof}
\end{proposition}

\section{Rectifiability of the symmetric flow}
In this section we prove the rectifiability of the symmetric varifolds introduced in \autoref{theorem: existence of limit symmetric measures}. Most of the arguments developed by \cite[Section 7]{KimTonegawa} can be reproduced in our setting, except for \cite[Proposition $7.2$]{KimTonegawa}. Indeed, this proposition requires the construction of a specific Lipschitz deformation which, roughly, drastically reduces the area of the varifold on a small scale, provided the mass is already small. We start by proving a symmetric variant thereof. See \autoref{fig:LipDefDirections} for a geometric intuition of the map. Finally, we describe how to use it to establish the rectifiability of the limit measures.
\begin{notation}
    Given $z\in \R^{n+1}$ and $\rho>0$, we define 
    \begin{gather*}
        B_\rho^S(z)=B_\rho(z)\cup B_\rho(\sigma(z))\\
        U_\rho^S(z)=U_\rho(z)\cup U_\rho(\sigma(z))\, .
    \end{gather*}
    \end{notation}
\noindent For future references, note that
    \begin{equation}
    \label{e: boundary BrS}
        \partial B_\rho^S(z) = \left(\partial B_\rho(z) \cap \R^{n+1}_+\right) \cup \left(\partial B_\rho(\sigma(z)) \cap \R^{n+1}_-\right)\cup \left(\partial B_\rho(z) \cap H_0\right)\, .
    \end{equation}
  and 
  \begin{equation}
  \label{e: measure BrS}
      \leb\left(U_\rho(z)\right) \leq \leb\left(U^S_\rho(z)\right)\leq  2 \,\leb\left(U^S_\rho(z)\right)\, .
  \end{equation}

\noindent We also state the monotonicity formula for future references.
\begin{lemma}
\label{lem: monotonicity formula}
Suppose $V\in {\bf V}_n(\mathbb R^{n+1})$, $0<r_1<r_2<\infty$, $x\in 
\mathbb R^{n+1}$, and for $0\leq s<\infty$,
\begin{equation*}
\norm{\delta V}(B_r(x))\leq s\norm{V}(B_r(x))
\end{equation*}
whenever $r_1< r< r_2$. Then
\begin{equation}
\label{e: monotonicity formula}
(\exp (sr))r^{-n} \|V\|(B_r(x))
\end{equation}
is nondecreasing in $r$ for $r_1< r <r_2$. 
\end{lemma}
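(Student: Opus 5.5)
The plan is the classical argument (Allard, Simon's \emph{Lectures}): show that the logarithmic derivative of $r^{-n}\norm{V}(B_r(x))$ is at least $-s$. Fix $x$ and, without loss of generality, translate so that $x=0$. For $r\in(r_1,r_2)$ and a smooth nonincreasing cutoff $\phi$ with $\phi(t)=1$ for $t\le 1-\tau$ and $\phi(t)=0$ for $t\ge 1$, test the first variation with the radial field
\[
g(y)=\phi\bigl(\abs{y}/r\bigr)\,y .
\]
A direct computation gives
\[
\mathrm{div}_S\, g=n\,\phi(\abs y/r)+\frac{\abs y}{r}\,\phi'(\abs y/r)\,\abs{S^\perp(y/\abs y)}'',
\]
where the second term is, more precisely, $\frac{\abs y}{r}\phi'(\abs y/r)\bigl(1-\abs{S^\perp(y/\abs y)}^2\bigr)$.

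Define
\[
I(r)=\int\phi(\abs y/r)\,d\norm V, \qquad J(r)=\int\phi(\abs y/r)\,\abs{S^\perp(y/\abs y)}^2\,dV .
\]
Since $\frac{\abs y}{r}\phi'(\abs y/r)=-r\,\partial_r\bigl[\phi(\abs y/r)\bigr]$, the first variation identity reads
\[
nI(r)-rI'(r)=-rJ'(r)+\delta V(g).
\]
The one place where the hypothesis enters is the bound on the variation term. Because $\spt g\subset B_r$ and $\abs g\le r$, we get
\[
\abs{\delta V(g)}\le r\,\norm{\delta V}(B_r)\le s\,r\,\norm V(B_r).
\]
Multiplying the identity by $r^{-n-1}$ gives
\[
\frac{d}{dr}\bigl(r^{-n}I(r)\bigr)\ge r^{-n}J'(r)-s\,r^{-n}\norm V(B_r)\ge -s\,r^{-n}\norm V(B_r),
\]
using $J'\ge0$, which holds since $\phi$ is nonincreasing.

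Next, let $\tau\to0$. Then $I(r)\uparrow\norm V(U_r)$, which agrees with $\norm V(B_r)$ for all but countably many $r$. Integrating the inequality in the distributional sense, and using that $f(r)=r^{-n}\norm V(B_r)$ is monotone up to a smooth factor and hence of bounded variation, yields the distributional inequality
\[
f'\ge -s f \quad\text{on }(r_1,r_2).
\]
This is equivalent to $(e^{sr}f)'\ge0$ in the sense of distributions. Therefore $e^{sr}r^{-n}\norm V(B_r(x))$ is nondecreasing. Right-continuity of $r\mapsto\norm V(B_r)$ upgrades the conclusion from almost every $r$ to every $r$.

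The main (mild) obstacle is the limiting step $\tau\to0$ and the passage from the integrated form to monotonicity at every $r$. I would handle it by working with the integrated version: for $r_1<a<b<r_2$,
\[
b^{-n}I_\tau(b)-a^{-n}I_\tau(a)\ge -s\int_a^b \rho^{-n}\norm V(B_\rho)\,d\rho .
\]
Pass $\tau\to0$ by monotone convergence, then apply a Grönwall-type argument to the resulting integral inequality.
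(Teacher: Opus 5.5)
Your argument is correct and is the classical Allard--Simon proof: test $\delta V$ with the radial field $\phi(\abs{y}/r)\,y$, drop the nonnegative tilt term $r^{-n}J'(r)$, bound $\abs{\delta V(g)}\le s\,r\,\norm{V}(B_r)$, and pass to the limit in the cutoff. This is the standard monotonicity formula that the paper states here without proof. Your final step is also sound: $f(\rho)=\rho^{-n}\norm{V}(B_\rho(x))$ is a nondecreasing function times a smooth factor, so the integrated inequality for all $r_1<a<b<r_2$ gives $D(e^{s\rho}f)\ge 0$ as a measure, and right-continuity of $\rho\mapsto\norm{V}(B_\rho(x))$ then yields monotonicity at every radius.
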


\begin{proposition}
\label{prop: area reducing symmetric radial projection}
    There exist $c_1, c_2\in (0,\infty)$ depending only on $n$ with the following property. Let $\E^S=\setpiccolo{E_i^S}\in \opns(\R^{n+1})$ be a symmetric open partition with $\Ha^n(H_0 \cap \partial \E^S)=0$ and let $z\in  \spt \normpiccolo{\partial \E^S} \cap \closedhalf$ with $\normpiccolo{\partial \E^S}\bigl( B_R^S(z)\bigr)\leq c_1 R^n$. Then, there exist a symmetric $\E^S$-admissible function $f$ and $r\in\left[\frac{3}{4}R, R\right]$ such that 
    \begin{itemize}
        \item[(1)] $f(x)=x$ for $x\in \R^{n+1} \setminus U_r^S(z)$;
        \item[(2)] $f(x)\in B^S_r(x)$ for $x\in B_r^S(z)$;
        \item[(3)] $\normpiccolo{\partial f_\star \E^S}(B_r^S(z))\leq \frac12 \normpiccolo{\partial \E^S}(B_r^S(z))$;
        \item[(4)]  $\mathcal L^{n+1}(E_i^S \triangle \widetilde E_i^S) \leq c_2 \left( \normpiccolo{\partial \E^S}(B^S_r(z)) \right)^\frac{n+1}{n}$ for all $i$, where $\setpiccolo{\widetilde \E^S_i}_{i=1}^N=f_\star \E^S$;
        \item[(5)] $\Ha^n(H_0 \cap \partial \widetilde \E^S)=0$.
    \end{itemize}
\end{proposition}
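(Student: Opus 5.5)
We follow the construction of \cite[Proposition 7.2]{KimTonegawa}: a radial projection onto the boundary of a small ball, adapted so that it commutes with $\sigma$. We distinguish two cases according to whether $B_R(z)$ and $B_R(\sigma(z))$ are essentially disjoint or overlap.

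\emph{Step 1: choice of $r$.} Write $\mu=\normpiccolo{\partial\E^S}$. By coarea/slicing, we choose $r\in[\frac34 R,R]$ with three properties. First, $\mu(\partial B^S_r(z))=0$. Second, the slice $\Ha^{n-1}(\partial B_r^S(z)\cap \spt\mu)$ is controlled by $C\,\mu(B^S_R(z))/R$. Third, by the relative isoperimetric inequality on the sphere, for each grain $E_i^S$ the set $\partial B_r^S(z)\cap E_i^S$ either has small $\Ha^n$-measure or almost full measure. The smallness $\mu(B_R^S(z))\le c_1R^n$, for $c_1$ small, forces exactly one grain, say $E_{i_0}^S$, to occupy $\partial B_r^S(z)$ up to a set of $\Ha^n$ measure at most $c\,\mu(B^S_r(z))$. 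This uses the description \eqref{e: boundary BrS} of $\partial B_r^S(z)$.

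Because $\mu$ and the grains are symmetric, the dominant grain on $\partial B_r(z)\cap\closedhalf$ and on its reflection is the same. The same holds in the overlapping case, where $\partial B^S_r(z)$ is a single connected symmetric surface, since it is connected across $H_0$ as long as $z$ is close to $H_0$.

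\emph{Step 2: construction of $f$.} We first treat the case $\d(z)\ge r$, where the two balls are disjoint or tangent. We define $f$ on $B_r(z)$ as the Kim--Tonegawa map: points of $U_r(z)$ are sent to the grain $E_{i_0}^S$, and the part of $\partial\E^S$ inside is pushed radially to $\partial B_r(z)\cap$ (complement of the dominant phase). We then set $f\defeq\sigma\circ f\circ\sigma$ on $B_r(\sigma(z))$ and $f=\mathrm{id}$ elsewhere. Symmetry (d) holds by construction, and admissibility (a)--(c) is inherited from \cite[Lemma 4.4]{KimTonegawa} together with \autoref{l:preserving partitions}.

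When $\d(z)<r$, we instead apply the same procedure to the symmetric domain $U_r^S(z)$. The radial projection from a single center is replaced by a projection onto $\partial B_r^S(z)$ along a symmetric Lipschitz retraction, obtained for example by projecting from the center $\pi(z)\in H_0$ of a ball containing $B_r^S(z)$, which is star-shaped with respect to $\pi(z)$. This retraction commutes with $\sigma$ because $\pi(z)$ is fixed by $\sigma$. Then (1) and (2) are immediate.

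\emph{Step 3: estimates.} The new boundary inside $B_r^S(z)$ is contained in the image of $\partial B_r^S(z)\setminus E_{i_0}^S$. Its measure is therefore bounded by $C\,\mu(B^S_r(z))^{\frac{n}{n}}\cdot(\mu(B^S_r)/r^n)^{1/n}$, which is $\le\frac12\mu(B_r^S(z))$ for $c_1$ small; this gives (3). The volume change is bounded by $\leb(U^S_r(z)\setminus E_{i_0}^S)$. The isoperimetric inequality in $U^S_r$, combined with \eqref{e: measure BrS}, bounds this by $c_2\mu(B_r^S(z))^{\frac{n+1}{n}}$, which gives (4).

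\emph{Step 4: property (5), the main obstacle.} The new interfaces lie on $\partial B^S_r(z)$ together with images of the old interfaces, and these must not charge $H_0$. We arrange this in two ways. First, the retraction maps $H_0\cap U^S_r(z)$ onto $H_0\cap\partial B_r^S(z)$, which is $(n-1)$-dimensional; this uses symmetry of the retraction and the fact that it fixes $H_0$ setwise. Second, outside $B_r^S(z)$ nothing changes, so the hypothesis $\Ha^n(H_0\cap\partial\E^S)=0$ is preserved.

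The delicate point is checking that, in the overlapping case, the symmetric retraction still satisfies the admissibility condition (b) and the Lipschitz/area bounds with constants depending only on $n$. I would handle this by choosing the projection center at $\pi(z)$ and comparing with the radial estimate on $B_{2r}(\pi(z))$.
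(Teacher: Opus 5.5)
There are two genuine gaps.

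\emph{The area estimate (3).} In Step 1 you choose $r$ through a Chebyshev bound on the slice, $\Ha^{n-1}(\partial B^S_r(z)\cap\partial\E^S)\le C\mu(B^S_R(z))/R$. The isoperimetric inequality on $\partial B^S_r(z)$ then bounds the non-dominant part of the sphere by $c\,(\mu(B^S_R(z))/R)^{n/(n-1)}\le c\,c_1^{1/(n-1)}\mu(B^S_R(z))$. This is a bound in terms of $\mu(B^S_R(z))$, not $\mu(B^S_r(z))$ as you state. Since $\mu(B^S_r(z))$ can be much smaller than $\mu(B^S_R(z))$ (for instance when most of the mass lies in the annulus), it does not give $\le\frac12\mu(B^S_r(z))$. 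The bound $C\mu(B^S_r)\,(\mu(B^S_r)/r^n)^{1/n}$ in your Step 3 is not derived from anything; averaging the volume estimate over spheres has the same defect.

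The missing ingredient is the single-radius comparison used in the paper, as in \cite[Proposition 7.2]{KimTonegawa}. Set $\eta(r)=\mu(B^S_r(z))$ and let $A\subset[\frac34R,R]$ be the set of radii where $E^S_{i_0}$ covers at least half of $\partial B^S_r(z)$; volume dominance plus coarea give $\mathcal L^1(A)\ge R/12$. If (3) failed on $A$, you would get $\frac12\eta(r)\le c_3\,\eta'(r)^{n/(n-1)}$ there, for a dimensional $c_3$. That means $(\eta^{1/n})'\ge c(n)>0$ on $A$, and integrating over $A$ contradicts $\eta(R)\le c_1R^n$. Equivalently, apply Chebyshev to $(\eta^{1/n})'$ rather than to $\eta'$.

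A smaller point, for (4): apply the relative isoperimetric inequality in the single ball $U_r(z)$ and double it by symmetry. Two barely overlapping balls have no uniform relative isoperimetric inequality.

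\emph{The overlapping case $\d(z)<r$.} In the Kim--Tonegawa map the projection center is not arbitrary. A small ball $B_{r_0}(x_0)\subset E^S_{i_0}$ is blown up onto the whole ball, and the rest is projected radially from $x_0$. This is what makes $\widetilde E^S_{i_0}\supset U^S_r(z)$ and flattens the other grains onto the sphere. It also gives admissibility (b): each $x\in\partial B^S_r(z)\cap E^S_i$ with $i\ne i_0$ lies in $f(\partial E^S_{i_0})$, because the segment from the center to $x$ crosses $\partial E^S_{i_0}$.

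An equivariant map with a single center forces that center onto $H_0$. But $\pi(z)$ is in general not in $E^S_{i_0}$: if $z\in H_0$, then $\pi(z)=z\in\spt\mu$ lies in no grain at all. Moreover, $H_0\cap U^S_r(z)$ need not meet $E^S_{i_0}$ at all. When the two balls barely overlap, the disc $H_0\cap B_r(z)$ is tiny and can be enclosed in a small symmetric lens of another grain at negligible area cost. Then $f_\star\E^S$ does not fill $U^S_r(z)$ with $E^S_{i_0}$, and (b), (3) and (5) all fail.

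The paper instead uses two centers, $x_0\in E^S_{i_0}\cap\R^{n+1}_+$ and $\sigma(x_0)$. It projects radially from $x_0$ onto the boundary of the convex set $B_r(z)\cap\closedhalf$, with rays that reach $H_0$ inside $B^S_r(z)$ stopping there, and then reflects. Both halves are the identity on $H_0\cap B^S_r(z)$, so they glue to a Lipschitz symmetric map. Since $B_{r_0}(x_0)$ is mapped onto all of $B_r(z)\cap\closedhalf$, the flat disc $H_0\cap U^S_r(z)$ becomes interior to $\widetilde E^S_{i_0}$. New boundary on $H_0$ then appears only in the $(n-1)$-dimensional set $H_0\cap\partial B^S_r(z)$, which is (5).

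Finally, the Lipschitz constant of $f$, which you flag as the delicate point, plays no role in the estimates. They only use that the new interfaces inside $B^S_r(z)$ lie in $\partial B^S_r(z)\setminus E^S_{i_0}$.
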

\begin{proof}
If $\d(z)>R$, one can directly apply \cite[Proposition $7.2$]{KimTonegawa} to both halfspaces to get the thesis. We then focus on the case $0\leq \d(z)\leq R$, that is $B_R(z) \cap B_R(\sigma(z)) \neq \emptyset$.

For $r>0$ let 
\begin{equation*}
\eta(r)=\normpiccolo{\partial \E^S}(B_r^S(z))= \Ha^n\left(B_r^S(z)\cap \bigl(\cup_{i=1}^N \partial \E_i^S\bigr)\right)\, .
\end{equation*}
Let $Y=\set{z, \, \sigma(z)},$ so that $\eta$ can be written as 
\begin{equation*}
\eta(r)=\Ha^n\left( \set{\mathrm{dist}(\cdot, Y)\leq r} \cap \bigl(\cup_{i=1}^N \partial \E_i^S\bigr)\right)\, .
\end{equation*}
Since $z,\, \sigma(z)\in \spt
\normpiccolo{\partial \E^S}$, we have $\eta(r)>0$ for $r>0$ and $\eta(r)$ is a monotone increasing function, thus $\mbox{a.e.}$ differentiable. By the coarea formula, we also have  
\begin{equation}
\label{e: 7.3 KT}
\mathcal H^{n-1}\left(\partial B_r^S(z)\cap \left(\cup_{i=1}^N \partial E_i^S\right)\right)\leq \eta'(r)<\infty\, ,
\end{equation}
whenever $\eta$ is differentiable.

By the relative isoperimetric inequality \cite[p.152]{AFP}, there exists $\gamma$ depending only on $n$ such that
\begin{equation}
\label{rlm1.1}
\min\set{\leb\left(U_R(z) \cap E_i^S\right),\leb\left(B_R(z)\setminus E_i^S\right)}
\leq \gamma \left( \Ha^{n}\left(U_R(z) \cap \partial E_i^S\right)\right)^{\frac{n+1}{n} }\, .
\end{equation}
By symmetry, we have 
\begin{equation}
\label{e: proof prop 7.2 0}
\begin{split}
    \leb\left(U_R^S(z) \cap E_i^S\right)&\leq \mathcal L^{n+1}\left(U_R(z) \cap E_i^S\right) + \mathcal L^{n+1}\left(U_R(\sigma(z)) \cap E_i^S\right)\\
    &= 2\, \mathcal L^{n+1}\left(U_R(z) \cap E_i^S\right)\, ,
    \end{split}
\end{equation}
and, similarly,
\begin{equation}
\label{e: proof prop 7.2 1}
    \begin{split}
        \leb\left(U_R^S(z)\setminus E_i^S\right)&\leq \leb \left(U_R(z)\setminus E_i^S\right)+\mathcal L^{n+1} \left(U_R(\sigma(z))\setminus E_i^S\right)\\
        &=2 \, \leb \left(U_R(z)\setminus E_i^S\right)\, .
    \end{split}
\end{equation}
By \eqref{e: proof prop 7.2 0}, \eqref{e: proof prop 7.2 1} and \eqref{rlm1.1}, we have 
\begin{equation}
\label{e: 7.4 KT}
    \begin{split}
       & \min \set{\leb\left(U_R^S(z) \cap E_i^S\right),\,  \leb\left(U_R^S(z)\setminus E_i^S\right)}\\
       & \qquad \quad \leq 2\min\set{\mathcal L^{n+1}(U_R(z) \cap E_i^S),\leb(U_R(z)\setminus E_i^S)}\\
        &\qquad \quad \leq 2\, \gamma \left( \Ha^{n}\left(U_R(z) \cap \partial E_i^S\right)\right)^{\frac{n+1}{n} }\\
        &\qquad \quad  \leq 2\, \gamma \left( \Ha^{n}\left(U_R^S(z) \cap \partial E_i^S\right)\right)^{\frac{n+1}{n}} \, .
    \end{split}
\end{equation}
We define $c_2\define 2\, \gamma$. Hypothesis (3) can be written as $\eta(R)\leq c_1 \, R^n$. If we restrict $c_1$ depending only on $n$ so that
\begin{equation*}
    c_1\leq \left( \frac{\omega_{n+1}}{2^{n+2} c_2}\right)^\frac{n}{n+1}\, ,
\end{equation*}
we have 
\begin{equation}
\label{e: 7.5 KT}
    \eta(R)\leq c_1\, R^n\leq \left( \frac{\mathcal L^{n+1} (U_R(z))}{2^{n+2}c_2} \right)^\frac{n}{n+1}\leq \left( \frac{\mathcal L^{n+1} (U_R^S(z))}{2^{n+2}c_2} \right)^\frac{n}{n+1}\, .
\end{equation}
We claim the existence of an index $i_0\in \set{1,\ldots, N}$ such that 
\begin{equation}
\label{e: 7.6 KT}
    \mathcal L^{n+1}\left(U_R^S(z)\setminus E_{i_0}^S\right) \leq c_2 (\eta(R))^\frac{n+1}{n}\leq \frac{\leb (U_R^S(z)) }{2^{n+2}}\, .
\end{equation}
Note that the claim only concerns the first inequality of \eqref{e: 7.6 KT}, as the latter one is \eqref{e: 7.5 KT}. We prove this claim by distinguish in two cases: first, suppose there is $E_{i_0}^S$ such that 
\begin{equation}
\label{e: proof 7.2 KT Ei0 takes a major part}
    \frac{\leb \left(U_R^S(z)\cap E_{i_0}^S\right)}{\leb \left(U_R^S(z)\right)}\ge \frac34\, ,
\end{equation}
namely $E_{i_0}$ occupies a major part of $B_R^S(z)$. If this is the case, \eqref{e: 7.6 KT} follows by \eqref{e: 7.4 KT}, since \eqref{e: proof 7.2 KT Ei0 takes a major part} implies the minimum at the left-hand-side of \eqref{e: 7.4 KT} is achieved by $U_R^S(z)\setminus E_{i_0}^S$. Suppose instead that, for the sake of a contradiction, \eqref{e: proof 7.2 KT Ei0 takes a major part} fails for every index $i\in \set{1,\ldots,N}$. Since $U_R^S \cap \left(\cup_{i=1}^N E_i^S\right)$ is a full measure set, there is a combination $E_{i_1}^S,\ldots, E_{i_J}^S$ such that, denoted $\widehat E= \cup_{k=1}^J E_{i_k}^S$, we have 
\begin{equation*}
    \frac{\leb \left(U_R^S(z)\cap \widehat E\right)}{\leb \left(U_R^S(z)\right)}\in \left(\frac14, \frac34\right) \, .
\end{equation*}
By \eqref{e: 7.4 KT} applied to $\widehat E$, we have 
\begin{equation*}
    c_2 \left( \normpiccolo{ \nabla \chi_{\widehat E}}  (U_R^S(z))\right)^\frac{n+1}{n}\ge \frac{\leb (U_R^S(z))}{4}\, ,
\end{equation*}
while we have $\normpiccolo{ \nabla \chi_{\widehat E}}  (U_R^S(z))\leq \eta(R)$, contradicting \eqref{e: 7.5 KT}. The claim is proved, namely \eqref{e: proof 7.2 KT Ei0 takes a major part} holds true for some $i_0$.

Let $r\in\left[ \frac{3}{4}R, R\right]$. By \eqref{e: 7.6 KT} and \eqref{e: measure BrS}, we have
\begin{equation*}
    \begin{split}
        \leb\left(U_r^S(z) \setminus E_{i_0}^S\right)&\leq  \frac{\leb (U_R^S) }{2^{n+2}}\leq \frac{2\, \omega_{n+1}}{2^{n+2}} \, \left(\frac{R}{\frac43 r}\right)^{n+1} \left( \frac43 r\right)^{n+1}\\
        &\leq \left(\frac23\right)^{n+1}\leb\left(U_r(z)\right)\leq \frac12 \, \leb\left(U_r^S(z)\right)\, .
    \end{split}
\end{equation*}
Thus, \eqref{e: 7.4 KT} with $r$ in place of $R$ shows 
\begin{equation}
\label{e: 7.7 KT}
    \leb \left(U_r^S(z)\setminus E_{i_0}^S\right) \leq c_2 \left( \Ha^n\bigl(U_r^S(z) \cap \partial E^S_{i_0}\bigl)\right)^\frac{n+1}{n}
\end{equation}
for all $r\in\left[ \frac{3R}{4}, R\right]$. We introduce 
\begin{equation}
\label{e: proof 7.2 KT def A and tilde A}
    \widetilde A\define \set{r\in \left[ \frac{3}{4}R, R\right] \, \colon \, \Ha^n\bigl(\partial B_r^S(z) \setminus E_{i_0}\bigr) >\frac12 \Ha^n(\partial B_r^S(z))}\, , \quad A\define \left[ \frac{3}{4}R, R\right]\setminus \widetilde A\, .
\end{equation}
By definition of the set $Y$, the coarea formula, \eqref{e: boundary BrS} and \eqref{e: proof 7.2 KT def A and tilde A}, we get
\begin{equation}
\label{e: 7.8 KT}
\begin{split}
    &\leb\left( \bigl(B_R^S(z)\setminus U^S_{\frac34 R}(z)\bigr) \setminus E_{i_0}^S\right)= \int_{\set{\frac34 R\leq \mathrm{dist}(x,Y)\leq R}\setminus E_{i_0}^S} dx\\
    &\qquad =\int_{\frac34 R}^R \Ha^n\left(\set{\mathrm{dist}(\cdot, Y)=r}\setminus E_{i_0}^S\right)\, dr =\int_{\frac34 R}^R \Ha^n\left(\partial B_r^S(z)\setminus E_{i_0}^S\right)\, dr\\
    &\qquad \ge \frac12 \Ha^n\left(\partial B_{\frac34 R}^S(z)\right) \mathcal L^1(\widetilde A)\, .
    \end{split}
\end{equation}
Since $\Ha^n\bigl(\partial B_{\frac34 R}^S(z)\bigr)\in \left[ (n+1)\omega_{n+1} \left(\frac34 R\right)^n,2(n+1)\omega_{n+1} \left(\frac34 R\right)^n\right]$, \eqref{e: measure BrS},
\eqref{e: 7.6 KT} and \eqref{e: 7.8 KT} show
\begin{equation}
\label{e: 7.9 KT}
    \mathcal L^1(\widetilde A) \leq \left( \frac23\right)^n\frac{R}{(n+1)} \quad \mbox{and} \quad \mathcal L^1(A) \ge \left(\frac14- \left( \frac23\right)^n \frac{1}{n+1} \right)R \ge \frac{R}{12}\, . 
\end{equation}
In particular, \eqref{e: 7.9 KT} implies 
\begin{equation}
\label{e: 7.10 KT}
    \Ha^n\left(\partial B_r^S(z)\setminus E_{i_0}^S\right) \leq \frac12 \Ha^n\left(\partial B_r^S(z)\right) \ \mbox{for} \ r\in A\subseteq\left[\frac{3}{4}R, R\right]\ \mbox{with} \ \mathcal L^1(A) \ge \frac{R}{12}\, .
\end{equation}
Next, fix an arbitrary $r\in A$ which satisfies \eqref{e: 7.3 KT}, and let $G_i= E_i^S \cap \partial B_r^S(z)$ for $i\in \set{1,\ldots,N}$. Each $G_i$ is open with respect to the topology on $\partial B_r^S(z)$ and $\partial G_i\subset \partial B_r^S(z) \cap \partial E_i^S$. By definition, we also have $\partial B_r^S(z) \setminus E^S_i= \partial B_r^S(z)\setminus G_i$. By the relative isoperimetric inequality on $\partial B_r(z)$, there exists $\widetilde \gamma$ depending only on $n$ such that
\begin{equation}
\label{e: proof 7.2 relative isoperimetric inequality dimension n}
    \begin{split}
       \min\set{\Ha^n(\partial B_r(z)\cap G_{i_0}), \, \Ha^n\left(\partial B_r(z)\setminus G_{i_0}\right)} &\leq \widetilde \gamma \left( \Ha^{n-1}(\partial B_r(z) \cap \partial G_{i_0} ) \right)^\frac{n}{n-1}\\
       &=\widetilde \gamma \left( \Ha^{n-1}(\partial B_r^S(z) \cap \partial G_{i_0} ) \right)^\frac{n}{n-1}\, ,
    \end{split}
\end{equation}
where the last equality is due to $\partial G_{i_0}\subset \partial B_r^S(z)$.
Arguing similarly to \eqref{e: 7.4 KT}, from \eqref{e: proof 7.2 relative isoperimetric inequality dimension n} and by defining $c_3\define 2\, \widetilde \gamma$, we can get
\begin{equation}
\label{e: proof 7.2 relative isoperimetric inequality dimension n 2}
 \min\set{\Ha^n(\partial B^S_r(z)\cap G_{i_0}), \, \Ha^n(\partial B^S_r(z)\setminus G_{i_0})} \leq c_3 \left( \Ha^{n-1}(\partial B^S_r(z) \cap \partial G_{i_0}) \right)^\frac{n}{n-1}\, .
\end{equation}
By \eqref{e: 7.10 KT} and \eqref{e: proof 7.2 relative isoperimetric inequality dimension n 2},
\begin{equation}
\label{e: 7.11 KT}
    \Ha^n\left(\partial B_r^S(z) \setminus G_{i_0}\right)\leq c_3 \left( \Ha^{n-1} (\partial G_{i_0})\right)^\frac{n}{n-1}\, .
\end{equation}

Now we choose $x_0\in U_r^S(z) \cap E_{i_0}^S \cap \R^{n+1}_+$ be such that $B_{2r_0}(x_0)\subset B_r \cap E_{i_0}^S \cap \R^{n+1}_+$ for some sufficiently small $r_0$. The existence of $x_0$ and $r_0$ is guaranteed by $E_{i_0}^S$ being open, and $x_0$ can be chosen in the upper halfspace by symmetry. Let us define $f$ as follows. $f(x)=x$ if $x\in \R^{n+1}\setminus U_r^S(z)$. Fix a direction $\eta$ and consider the ray $L_\eta$ starting from $x_0$ having direction $\eta$. There are two possibilities (see Figure \ref{fig:LipDefDirections}):
\begin{itemize}
    \item if the ray does not intersect $H_0\cap B_r^S(z)$, dilate the segment $[x_0, x_0+r_0\,\eta]$ bijectively onto the segment $[x_0, x_0+ \widetilde s \, \eta]$, where $x_0+\widetilde s \, \eta\in \partial B_r^S(z)$ is the point where $L_\eta$ meets $\partial B_r^S(z)$. Then, radial project to the point $x_0+\widetilde s \, \eta\in \partial B_r$ any point of the form $x_0+\rho \, \eta$ for $r_0\leq \rho \leq \widetilde s$.
    \item if the ray intersects $H_0\cap B_r^S(z)$, we do the same thing, but stopping at $H_0$. This means, dilate the segment $[x_0, x_0+r_0\,\eta]$ bijectively onto the segment $[x_0, x_0+ \widehat s \, \eta]$, where $x_0+\widehat s \, \eta\in H_0$ is the point where $L_\eta$ meets $H_0$. Then, radial project to the point $x_0+\widehat s \, \eta\in H_0$ any point of the form $x_0+\rho \, \eta$ for $r_0\leq \rho \leq \widehat s$.
\end{itemize}
\begin{figure}
    \centering
    \includegraphics[width=0.6\linewidth]{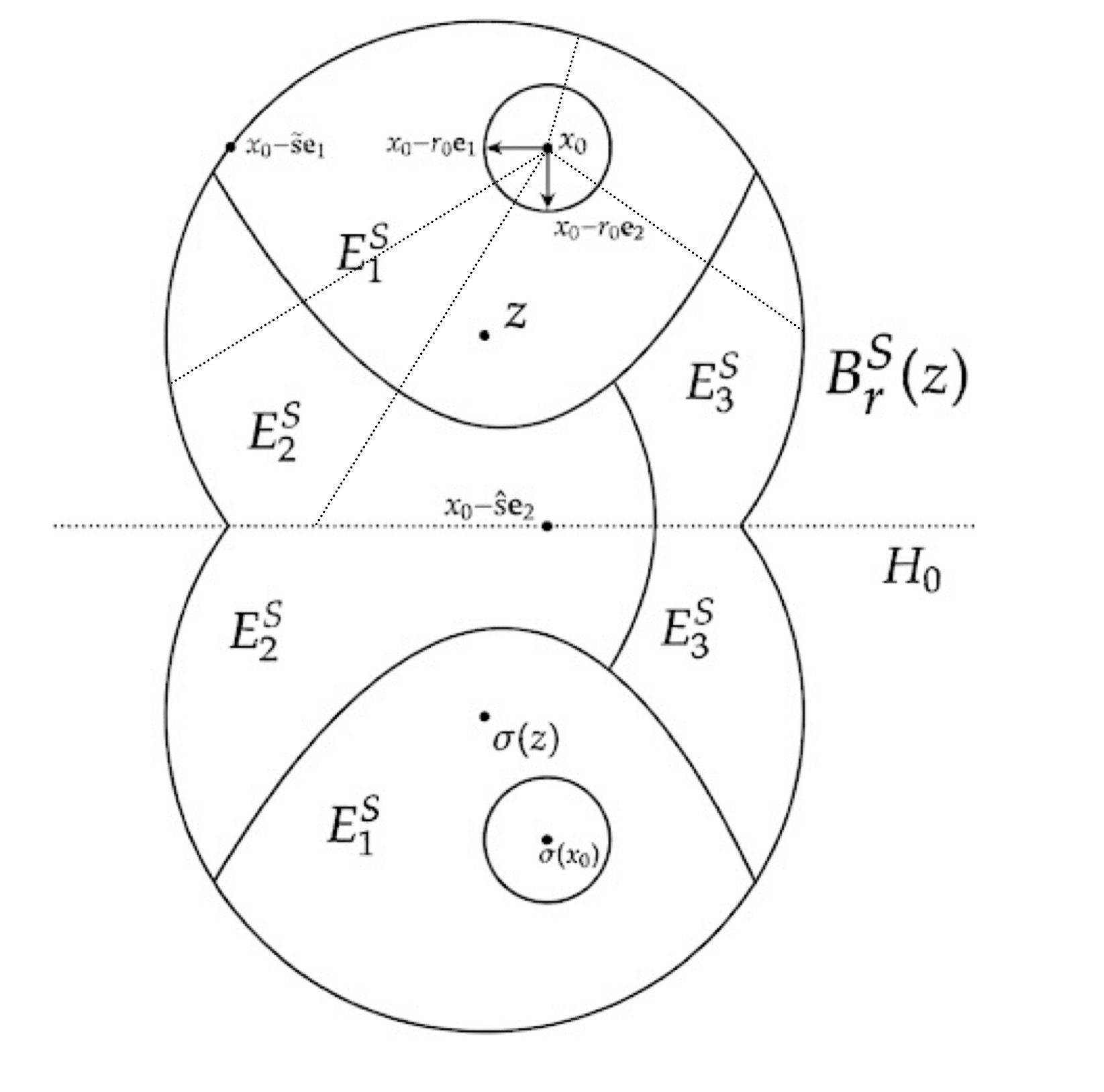}
    \caption{Description of the map}
        \label{fig:LipDefDirections}
\end{figure}

We claim that such $f$ is symmetric $\E^S$-admissible. The symmetry of the map is a consequence of the symmetery of the grains and of the definition. Let $\widetilde E_i^S\define{\rm int}(f(E_i^S))$. For $i\neq i_0$, $\widetilde E^S_{i}=E^S_i
\setminus B_r^S(z)$, because $f$ is the identity on $\R^{n+1}
\setminus B_r^S(z)$ and $f(E_i^S
\cap B_r^S(z))\subset \partial B_r^S(z)$. On the other hand, $\widetilde E_{i_0}^S=E^S_{i_0}\cup B_r^S(z)$ since $B_r^S(z)=f\left(B_{r_0}^S(x_0)\right) $ and  $B_{r_0}^S(x_0)\subset
E^S_{i_0}$, and any $x\in \partial B_r^S(z)\cap E^S_{i_0}$ is in $E^S_{i_0}\cup (U_r^S(z))$. 
For two open sets $A$ and $B$, we have $\partial (A\cap B)\subset(\partial A\cap
{\rm clos}\,B)\cup (\partial B\cap A)$ and $\partial (A\cup B)\subset
(\partial A\setminus {\rm clos}\,B)\cup (\partial B\setminus A)$. So 
\begin{equation}
\label{rlm6}
\begin{split}
\partial \widetilde E^S_{i}&=\partial \left(E_i^S\cap(\R^{n+1}\setminus
B_r^S(z))\right)\\
&\subset\left(\partial E^S_i\cap{\rm clos}\,(\R^{n+1}\setminus B_r^S(z)) \right)
\cup \left(\partial B_r^S(z)\cap E^S_i\right)\\
&=\left(\partial E^S_i\setminus U_r^S(z) \right)
\cup G_i
\end{split}
\end{equation}
for $i\neq i_0$, while 
\begin{equation}
\label{rlm7}
\begin{split}
\partial \widetilde E^S_{i_0}&=\partial \left(E^S_{i_0}\cup U_r^S(z)\right)\\
&\subset \left(\partial E_{i_0}^S
\setminus B_r^S(z)\right)\cup \left(\partial B_r^S(z)\setminus E^S_{i_0}\right)\\
&=\left(\partial E^S_{i_0}
\setminus B_r^S(z)\right)\cup \left(\partial B_r^S(z)\setminus G_{i_0}\right).
\end{split}
\end{equation}
We need to check $\R^{n+1}\setminus \cup_{i=1}^N \widetilde E_i^S\subset f\left(\cup_{i=1}^N
\partial E_i^S\right)$. Since $\R^{n+1}\setminus \cup_{i=1}^N\widetilde E_i^S$ does
not have any interior point, it is enough to prove $\cup_{i=1}^N \partial \widetilde E_i
\subset f\left(\cup_{i=1}^N \partial E_i^S\right)$. For $i\neq i_0$, $\partial E_i^S\setminus
U_r^S(z)\subset f(\partial E_i^S)$ since $f$ is the identity on $\R^{n+1}\setminus U_r^S(z)$.
For any $x\in G_i\cap \closedhalf$, consider a line segment $I$ with two ends, $x_0$ and $x$ (connect $x$ to $\sigma(x_0)$ instead in the negative halfspace).  
Since $x\in G_i=\partial B_r^S(z)\cap E_i^S$, there is some neighborhood of $x$ of $I$
belonging to $E_i^S$. On the other hand, we have $B_{r_0}(x_0)\subset E^S_{i_0}$, 
thus there must be 
some point $\widehat x\in I\cap \partial E^S_{i_0}$. Since $f$ on $B_r^S(z)\setminus 
B_{r_0}^S(x_0)$ is a (symmetrized) radial projection to $\partial B_r^S(z)$, $f(\widehat x)=x$. This proves
that $G_i\subset f(\partial E_{i_0}^S)$. Then \eqref{rlm6} shows $\partial
\widetilde E_i^S\subset f\left(\partial E_i^S\cup \partial E_{i_0}^S\right)$ for $i\neq i_0$. 
For $i=i_0$, $\partial
E^S_{i_0}\setminus B_r^S(z)=f\left(\partial E^S_{i_0}\setminus B_r^S(z)\right)$
since $f$ is the identity there. For any $x\in \partial B_r^S(z) \setminus G_{i_0}=\partial
B_r^S(z)\setminus E_{i_0}$, either
$x\in \partial E_i^S$ for some $i$ (including $i=i_0$), or $x\in E_i^S$ for some 
$i\neq i_0$. In the former case, since $f$ is the identity on $\partial B_r^S(z)$, 
$x\in f(\partial E_i^S)$. In the latter case, the line segment connecting $x_0$
and $x$ contains $\widehat x\in \partial E^S_{i_0}$ just as before, hence $x\in f(\partial
E^S_{i_0})$. Thus by \eqref{rlm7}, we have $\partial\widetilde E_{i_0}^S\subset 
f\left(\cup_{i=1}^N\partial E^S_{i}\right)$. In conclusion, we have proved that $\cup_{i=1}^N
\partial\widetilde E^S_i\subset f(\cup_{i=1}^N \partial E^S_i)$, and this proves that
$f$ is symmetric $\mathcal E^S$-admissible.

Let us show $(5)$. Any point $y\in U_r^S(z)\cap H_0$ is an interior point for $\widetilde E_{i_0}^S$; hence,
\begin{equation*}
    H_0 \cap \partial \widetilde \E^S\subseteq \left((H_0 \cap \partial \E^S) \setminus B_r^S(z) \right) \cup (H_0 \cap \partial B_r^S(z))\, ,
\end{equation*}
namely the map generates new boundary points on $H_0$ only in the intersection $H_0\cap \partial B_r^S(z)$, which is a set of dimension $n-1$. Therefore, since we are assuming that $\Ha^n( H_0\cap \partial \E^S)=0$, we get (5). 

With 
$\widetilde{\E}^S =f_{\star}\E^S=\{\widetilde E_i^S\}_{i=1}^N$, 
we have from \eqref{rlm6}, \eqref{rlm7} and $\cup_{i\neq i_0}G_i\subset \partial
B_r^S(z)\setminus G_{i_0}$ that
\begin{equation}
\label{e: 7.14 KT}
\begin{split}
\normpiccolo{\partial \widetilde{\E}^S}(B_r^S(z))&=\Ha^{n}\left(
\cup_{i=1}^N\partial \widetilde E_i^S\cap B_r^S(z)\right)\\
&\leq 
\Ha^{n}(\partial B_r^S(z)\setminus G_{i_0})+\sum_{i\neq i_0}
\Ha^{n}(\partial E_i^S\cap \partial B_r^S(z)) \\
&=\Ha^{n}(\partial B_r^S(z)\setminus G_{i_0})\, ,
\end{split}
\end{equation}
the last equality due to \eqref{e: 7.3 KT}. We next note that $E_i^S\triangle
\widetilde E_i^S=E_i^S\cap B_r^S(z)$ for $i\neq i_0$ and $E_i^S\triangle
\widetilde E_{i_0}^S=U_r^S(z)\setminus E^S_{i_0}$. Since both are included in $B_r^S(z)\setminus E_{i_0}$, 
\eqref{e: 7.7 KT} shows that the condition (4) is satisfied with  this 
$c_2$. Thus we conclude that the symmetric $\mathcal E^S$-admissible 
function $f$ satisfies conditions (1), (2), (4), (5) so far.

If the conclusion were not true, then, we must have $\normpiccolo{\partial\widetilde\E^S}
(B_r^S(z))> \frac12 \normpiccolo{\partial \mathcal E^S}(B_r^S(z))=\frac12\eta(r)$ if $r\in A$
with \eqref{e: 7.3 KT}. 
Combining \eqref{e: 7.14 KT}, \eqref{e: 7.11 KT} and \eqref{e: 7.3 KT}, we obtain
\begin{equation*}
\begin{split}
\frac12 \eta(r)&\leq  \Ha^n\left(\partial B_r^S(z)\setminus G_{i_0}\right) \leq c_3 \left( \Ha^{n-1} (\partial G_{i_0} ) \right)^\frac{n}{n-1}\\
&\leq c_3 (\eta'(r))^{\frac{n}{n-1}}\, .
\end{split}
\end{equation*} 
Since we have $\mathcal L^1(A)\geq
\frac{R}{12}$ by \eqref{e: 7.9 KT}, 
\begin{equation*}
\eta^{\frac{1}{n}}(R)\geq \int_{\frac34 R}^R \left(\eta^\frac1n(r)\right)'\, dr\ge \frac1n
\int_A \eta^{\frac{1-n}{n}}(r) \, \eta'(r)\,dr\geq n^{-1}(2c_3)^{\frac{1-n}{n}} \frac{R}{12}\, ,
\end{equation*}
and then 
\begin{equation*}
   \eta(R) \ge \frac{R^n}{(12\,n)^n\, (2\,c_3)^{n-1}} \, .
\end{equation*}
We would obtain 
a contradiction to $\normpiccolo{\partial\mathcal E^S}\bigl(B_R^S(z)\bigr)=\eta(R)\leq c_1\, R^{n}$ by
choosing an appropriately small $c_1$ depending only on $n$.
\end{proof}

\begin{theorem}
\label{theorem: rectifiability}
Suppose that $\{\mathcal E_j^S\}_{j=1}^{\infty}\subset\opn$ is a family of symmetric open partitions with $\normpiccolo{\partial \E^S_j}(H_0)=0$ and $\{\eps_j\}_{j=1}^{\infty}\subset (0,1)$
satisfy
\begin{enumerate}
\item
$\lim\limits_{j\rightarrow\infty}j^4 \eps_j =0$,
\item
$\sup_j \normpiccolo{\partial\mathcal E^S_j}(\R^{n+1})<\infty$,
\item $\displaystyle
\liminf\limits_{j\rightarrow\infty}
\int_{\R^{n+1}} \frac{\abspiccolo{\Phi_{\eps_j}\ast\delta(\partial\E^S_j)}^2
}{\Phi_{\eps_j}\ast\|\partial\E^S_j\|+\eps_j}\, dx
<\infty$,
\item
$\lim\limits_{j\rightarrow\infty} \Deltajvc \normpiccolo{\partial\mathcal E^S_j}(\R^{n+1})=0$.
\end{enumerate}
Then there exists a converging subsequence $\setpiccolo{\partial \mathcal E_{j_\l}^S}_{\l=1}^{\infty}$
whose limit $V^S\in{\bf V}_{n}(\R^{n+1})$ is symmetric and it satisfies, for some dimensional constant $c>0$,
\begin{equation*}
\theta^{\ast n}\left(\normpiccolo{V^S},x\right)\geq c \mbox{ for $\normpiccolo{V^S}$ a.e.~$x$}\, .
\end{equation*}
Furthermore, $V^S\in {\bf RV}_{n}(\mathbb R^{n+1})$. 
\begin{proof}
The interior regularity immediately follows by repeating the arguments of \cite[Theorem $7.3$]{KimTonegawa} and \cite[Appendix $A.2$]{STCanonical}), with no modifications. Indeed, the proof is local and for any $x\not \in H_0$, it sufficies to take $j$ sufficiently large so that $\abs{ \d}(x)\ge \frac{1}{j^2}$ to avoid any boundary effect. Let us focus on the boundary rectifiability. We basically follow again the proof of \cite[Theorem $7.3$]{KimTonegawa}, making use of our area-reducing symmetric Lipschitz map built in \autoref{prop: area reducing symmetric radial projection}. We briefly describe their proof, focusing on the required modifications.

Let us fix $x_0\in H_0$, which we can assume to be the origin after a translation. The existence of a converging subsequence $\partial \E^S_{j_\l}$ is guaranteed by the compactness of Radon measures. Define, for $R\in(0,1)$,
\begin{equation*}
    F_R\define \set{x\in B_1(x_0) \, \colon \, R^{-n}\normpiccolo{ V^S}(B_R)<\frac{c_1}{16} }\, ,
\end{equation*}
    where $c_1$ is the constant given by \autoref{prop: area reducing symmetric radial projection}. The aim is to show that $\lim\limits_{R\to 0^+} \normpiccolo{V^S}(F_R)=0$, implying the thesis. Next, for $m\in \Na$ we introduce 
    \begin{equation*}
        F_{R,m}\define \set{x\in F_R \, \colon \, \normpiccolo{\Phi_{\eps_{j_\l}} \ast \partial \E^S_{j_\l}} (B_R(x))< \frac{c_1}{16} \ \mbox{for all} \ \l\ge m}\, ,
    \end{equation*}
    and we note there exists $m_1 \gg 1$ with 
    \begin{equation*}
        \normpiccolo{V^S}(F_{R,m_1}) \ge \frac12 \normpiccolo{V^S}(F_R)\, .
    \end{equation*}
    Next, we define 
    \begin{equation*}
        G_R\define \set{x\in \R^{n+1}\, \colon \, \mathrm{dist}(x,F_{R,m_1})<(1-2^\frac{-1}{n})R}\, .
    \end{equation*}
    By \cite[Equation $(7.27)$]{KimTonegawa}, we have 
    \begin{equation*}
        \normpiccolo{\partial \E_{j_\l}^S}(G_R) \ge \frac14 \normpiccolo{V^S}(F_R)\, ,
    \end{equation*}
    thus it suffices to show that $\normpiccolo{\partial \E_{j_\l}^S}(G_R) \rightarrow 0$ as $R\rightarrow 0$. Define 
    \begin{gather}
    \label{e: def Grj1}
            G_{R,j_\l,1}\define \set{x\in G_R \, \colon \, \theta^n(\normpiccolo{\partial \E_{j_\l}^S},x)=1 \ \mbox{and} \ (2j_\l^2)^n\normpiccolo{\Phi_{\eps_{j_\l}} \ast \partial \E_{j_\l}^S}\left(B^S_{\sfrac{1}{2j_\l^2}}(x)\right) >\frac{c_1}{4} }\\
             \label{e: def Grj2}
           G_{R,j_\l,2}\define \set{x\in G_R \, \colon \, \theta^n(\normpiccolo{\partial \E_{j_\l}^S},x)=1 \ \mbox{and} \ (2j_\l^2)^n\normpiccolo{\Phi_{\eps_{j_\l}} \ast \partial \E_{j_\l}^S}\left(B^S_{\sfrac{1}{2j_\l^2}}(x)\right) \leq \frac{c_1}{4} }\, .
     \end{gather}
    The only differences with respect to the ones in \cite{KimTonegawa} is that we are using symmetric balls. Concerning $G_{R,j_\l,1}$, \eqref{e: def Grj1} implies 
    \begin{equation}
    \label{e: 7.29 KT}
        (2j_\l^2)^n\normpiccolo{\Phi_{\eps_{j_\l}} \ast \partial \E_{j_\l}^S}\left(B_{\sfrac{1}{2j_\l^2}}(x)\right) >\frac{c_1}{8}
    \end{equation}
    and we can then mostly reproduce the computations of \cite{KimTonegawa}. Indeed, in \cite[Equation $(7.28)$]{KimTonegawa}, the authors take $m_3\in \Na,$ $m_3\gg 1$ so that 
    \begin{equation*}
    \frac{1}{2j_{m_3}^2}< \frac{R}{2}\, 
    \end{equation*}
    whereas we need to take it slightly smaller by a constant factor, since \eqref{e: 7.29 KT} differs from the original one by a factor $\frac12$. This is the only required modification to estimate $G_{R,j_\l,1}$, since the other arguments only require the monotonicity formula \eqref{e: monotonicity formula} and the Besicovitch covering theorem. By \cite[Equation $(7.38)$]{KimTonegawa}, we have 
    \begin{equation*}
      \lim\limits_{R\to 0}  \limsup\limits_{\l \to \infty} \normpiccolo{\partial \E_{j_\l}^S} (G_{R,j_\l,1})=0\, .
    \end{equation*}
    
    Let $z\in G_{R,j_\l,2}$ now. Since $\eps_{j_\l} \ll \frac{1}{2j_\l^2}$ for all large $\l$ by assumption (1), we have 
    \begin{equation*}
        \Phi_{\eps_{j_\l}} \ast \chi_{B^S_{\sfrac{1}{2j_\l^2}}(z)}\ge \frac14 \quad \mbox{on} \ B^S_{\sfrac{1}{2j_\l^2}}(z) 
    \end{equation*}
    and 
    \begin{equation}
    \label{e: 7.30 KT}
        \normpiccolo{\partial \E^S_{j_\l}} \left(B^S_{\sfrac{1}{2j_\l^2}}(z)\right) \leq 4 \normpiccolo{\Phi_{\eps_{j_\l}} \ast \partial \E^S_{j_\l}}\left(B^S_{\sfrac{1}{2j_\l^2}}(z)\right)
    \end{equation}
    By \eqref{e: def Grj2} and \eqref{e: 7.30 KT}, we have 
    \begin{equation*}
        (2j_\l^2)^n\normpiccolo{\partial \E^S_{j_\l}} \left(B^S_{\sfrac{1}{2j_\l^2}}(z)\right) \leq c_1\, .
    \end{equation*}
    Therefore, for every $z\in G_{R,j_\l,2}$ there exist $r_z\in \left[ \frac{3}{8j_\l^2}, \frac{1}{2j_\l^2}\right] $ and a symmetric $\E^S_{j_\l}$-admissible map $f_z$ satisfying the conclusions of \autoref{prop: area reducing symmetric radial projection}. We apply the Besicovitch covering theorem to the family $\setpiccolo{B_{r_z}(z) \ \colon \ z\in G_{R,j_\l,2}\cap \closedhalf}$ to get a finite set $\set{z_k}_{k=1}^\Lambda\subset G_{R,j_\l,2}\cap \closedhalf$ such that $\setpiccolo{B_{r_{z_k}} (z_k)}_{k=1}^\Lambda$ is mutually disjoint and, writing $B_{r_{z_k}} (z_k)$ as $B_k$, we have 
    \begin{equation}
    \label{e: 7.41 KT not symmetric}
        \normpiccolo{\partial \E^S_{j_\l} }\left( \cup_{k=1}^\Lambda B(k) \right) \ge \frac{1}{\mathbf B_{n+1}} \normpiccolo{\partial \E^S_{j_\l} } \left(G_{R,j_\l,2} \cap \closedhalf\right)\, ,
    \end{equation}
    where $\mathbf B_{n+1}$ is a dimensional constant. Note that the finiteness of $\Lambda$ follows from $r_z\ge \frac{3}{8j_\l^2}$ and the fact $G_R$ is bounded.
    Consider the family $\setpiccolo{B^S(k)}_{k=1}^\Lambda \define \setpiccolo{ B(k) \cup \sigma (B(k))}_{k=1}^\Lambda$, which is still a family of pairwise disjoint sets. Note that, similarly to \eqref{e: measure BrS}, 
    \begin{equation}
    \label{e: proof thm 7.3 1}
        \normpiccolo{\partial \E^S_{j_\l} }\left( \cup_{k=1}^\Lambda B(k) \right)+ \normpiccolo{\partial \E^S_{j_\l} }\left( \cup_{k=1}^\Lambda \sigma(B(k)) \right)\leq 2\, \normpiccolo{\partial \E^S_{j_\l} }\left( \cup_{k=1}^\Lambda B^S(k) \right)\, ;
    \end{equation} 
    by \eqref{e: 7.41 KT not symmetric}, $\normpiccolo{\partial \E^S_j}(H_0)=0$ and \eqref{e: proof thm 7.3 1}, we get 
    \begin{equation}
    \label{e: 7.41 KT}
       2 \normpiccolo{\partial \E^S_{j_\l} }\left( \cup_{k=1}^\Lambda B^S(k) \right) \ge \frac{1}{\mathbf B_{n+1}} \normpiccolo{\partial \E^S_{j_\l} } \left(G_{R,j_\l,2}\right)\, .
    \end{equation}
    With this choice, define $f\colon \R^{n+1}\rightarrow\R^{n+1}$ by
    \begin{equation*}
        f(x) \define \begin{cases*}
            f_{z_k}(x) & if $x\in B^S(k)$ for some $k\in \set{1,\ldots,\Lambda}$\\
            x & otherwise\, .
        \end{cases*}
    \end{equation*}
    Since $f_{z_k}$ is $\E^S_{j_\l}$-admissible, so is $f$ due to the disjointness of $B^S(k)$. One can check $f\in \classEvc(\E, j,\operatorname{Sym})$ as in \cite{KimTonegawa,STCanonical}. By \eqref{ar1}, (3) of \autoref{prop: area reducing symmetric radial projection} and \eqref{e: 7.41 KT}, we have
    \begin{equation*}
        \begin{split}
            \Deltajvc\normpiccolo{\partial \E^S_{j_\l}}(\R^{n+1})& \leq \normpiccolo{\partial f_\star \partial \E^S_{j_\l}}(\R^{n+1})-\normpiccolo{\partial \E_{j_\l}^S}(\R^{n+1})\\
            & =\sum_{k=1}^\Lambda \normpiccolo{\partial f_\star \partial \E^S_{j_\l}}(B^S(k))-\normpiccolo{\partial \E_{j_\l}^S}(B^S(k))\\
            &\leq -\frac12 \sum_{k=1}^\Lambda \normpiccolo{\partial \E_{j_\l}^S}(B^S(k))\\
            &\leq \frac{1}{4 \mathbf B_{n+1}} \normpiccolo{\partial \E^S_{j_\l} } \left(G_{R,j_\l,2}\right)\, .
        \end{split}
    \end{equation*}
    Finally, assumption (4) implies the thesis.
\end{proof}
\end{theorem}

\section{Integrality of the symmetric flow}
\label{section: integrality}
The first part of the proof of the integrality proposed by \cite[Section 8]{KimTonegawa} aims at showing that, given an approximate solution to the MCF obtained by \eqref{app:0}-\eqref{app:future}, say $V$, whenever we fix a point $x$ and a set $Y$ consisting of density-$1$ points in the orthogonal complement of the approximate tangent space of $V$ at $x$, then the mass of $V$ around $Y$ has to be at least the one of $\Ha^0(Y)$ discs. In other words, $V$ should be made of as many sheets as $\Ha^0(Y)$. To establish this property, precisely stated in \cite[Lemma $8.5$]{KimTonegawa}, the authors partition $Y$ into many subsets $Y_1,\ldots,Y_J$ of small diameter, where they can use specific Lipschitz maps as competitors for the almost area minimizing process encoded in the first step of the algorithm. Similarly to the rectifiability, the interior integrality follows immediately, since for fixed $x$ we can take $j$ sufficiently large to neglect boundary effects. We therefore mainly focus on boundary regularity. The aforementioned Lipschitz maps are naturally symmetric whenever the sets $Y_k$ are symmetric, which requires us to make a symmetric partition of $Y$. However, if $Y$ contains points far from the boundary, no symmetric set containing such points will have a small diameter. We therefore replace the smallness of $\mathrm{diam}\, Y_k$ by the smallness of $\mathrm{diam}\, (Y_k \cap \R^{n+1}_+)=\mathrm{diam}\, (Y_k \cap \R^{n+1}_-)$. In any case, while performing the decomposition of the set $Y$, if something is sufficiently far from $H_0$ (at a distance grater than $\sfrac{1}{j^2}$), we can treat it with Kim-Tonegawa's results. Once this symmetric decomposition is done, the rest of the proof requires basically no modifications.
\vspace{0.5cm}

\noindent In the following, by analogy with the notation of \cite[Section $8$]{KimTonegawa}, we will denote by $T$ the matrix representing the projection onto the hyperplane $H_0$ and $T^\perp$ its orthogonal complement. Given $Y\subset H_0^\perp$ and $r_1,\, r_2\in (0,\infty)$, define a closed set 
\begin{equation*}
    E(r_1,r_2)\define \set{x\in \R^{n+1} \, \colon \, \abs{T(x)}\leq r_1, \ \mathrm{dist}\left(T^\perp (x), Y\right) \leq r_2}\, .
\end{equation*}
\begin{lemma} 
\label{lemma 8.1}
Corresponding to $n,\nu \in \Na$, $\alpha\in(0,1)$ and $\zeta\in (0,1)$, there exist $\gamma\in (0,1)$ and $j_0\in \Na$ with
the following property.
Assume 
\begin{enumerate}
\item
$\E^S=\{E^S_i\}_{i=1}^N \in \opns$ is a symmetric open partition, $j\in 
\Na$ with $j\geq j_0$, 
$R\in \left(0,\frac12 j^{-2}\right)$,  $\rho\in \left(0,\frac12 j^{-2}\right)$;
\item
$\rho\geq \alpha R$;
\item 
$Y\subset H_0^{\perp}$ is a symmetric set, it has no more than $2\, \nu$ elements, $Y \,\cap\,  H_0=\emptyset$, $\theta^{n}(\|\partial \mathcal E^S\|,y)=1$ for all $y\in Y$, $\mathrm{diam}\, Y_+=\mathrm{diam}\, Y_-<j^{-2}$ and $\normpiccolo{\partial \E^S}(H_0)=0$;
\item
$\int_{{\bf G}_{n}(E^\ast(r))}\|S-T\|\, d(\partial \mathcal E)
(x,S)\leq \gamma \|\partial\mathcal E\|(E^\ast(r))$ for all $r\in (0,R)$;
\item 
$\Deltajvc \|\partial \mathcal E^S\|(E^\ast(r))
\geq -\gamma \|\partial\mathcal E^S\|(E^\ast(r))$ for all $r\in (0,R)$,
\end{enumerate}
where we denoted $E^\ast(r)\define E(r,(1+R^{-1}r)\rho)$ for short.\\
Then we have
\begin{equation}
\normpiccolo{\partial \mathcal E^S}(E(R,2\rho))\geq (\mathcal H^0(Y)-\zeta)\, \omega_{n} R^{n}.
\label{itg2}
\end{equation}
\begin{remark}
The only difference with the statement of \cite[Lemma $8.1$]{KimTonegawa} is $(3)$.
\end{remark}
\begin{proof}
The proof adopted by \cite{KimTonegawa} argues by contradiction, showing that if \eqref{itg2} were false, then, in terms of measure, there are less than $\Ha^0(Y)$ discs and one could build a Lipschitz deformation drastically reducing the measure of $\partial \E^S$ by expanding an hole, yielding to a contradiction with $(5)$. The only crucial point is whether this map is symmetric and whether the smallness of the diameter of $Y_+$ and $Y_-$ can replace the smallness of the diameter of $Y$. In particular, \cite{KimTonegawa} assumed the smallness of the diameter of $Y$ to show the $\E^S$-admissibility of the map.

For the reader's convenience, we report some notations introduced by \cite[Lemma $8.1$]{KimTonegawa}, referring to it for detailed discussions. Let $a\in H_0$, $\delta>0$ small, $0<r_1<R$, $\rho_1=(1+R^{-1}r_1)\rho, \ \  \xi\in \left(0, \frac{\rho_1\, r_1}{R}\right) , \ \ a^\ast =\frac{ r_1}{r_1-\delta} \, a$ and 
\begin{gather*}
    C(T,a,\delta) \define \set{x\in \R^{n+1}\, \colon \, \abs{T(x)-a)}\leq \delta }\\
    E_1(a) \define \set{x\in C(T,a,\delta)\, \colon \, \abs{T(x)- a^\ast}\leq 2\, \delta \, \xi^{-1}(\rho_1- \mathrm{dist}(T^\perp(x), Y)}\\
    E_2(a) \define \set{x\in C(T,0,r_1) \setminus E_1(a) \, \colon \, \abs{T(x)-a^\ast}\leq 2\, r_1\, \xi^{-1} (\rho_1-\mathrm{dist}(T^\perp(x), Y)) }\, .
\end{gather*}
Note that, since $a,a^\ast\in H_0$ and $Y$ is symmetric, these sets are symmetric. The Lipschitz map that is needed to get a contradiction, $f_a$, is the identity outside $E_1(a) \cup E_2(a)$ and it radially expands, in the $n$ directions given by $H_0$, each ``horizontal slice’’ of $E_1(a)$. This shows that $f_a$ is actually symmetric. Moreover, since any horizontal slice of the form $H_0\times \set{t}$ is mapped in a subsets of itself, $f_a$ is Lipschitz and by assumption $\normpiccolo{\partial \E^S}(H_0)=0$, then 
\begin{equation*}
    \normpiccolo{\partial ({f_a}_\ast \E^S)}(H_0)=0.
\end{equation*}
that is (d) of \autoref{ar8} is satisfied. 

Next, we want to show that for all $\varphi \in \cA_j$, $\normpiccolo{\partial (f_a)_\star \E^S}(\varphi)\leq \normpiccolo{\partial \E^S}(\varphi)$. The authors notice that, in particular, $f_a$ is the identity outside $E(r_1, \rho_1)$. We distinguish in two cases:
\begin{itemize}
    \item Suppose $\mathrm{diam} \, Y<4\, j^{-2}$. With respect to the original proof, we just need to further restrict $j_0$ depending only on $\nu$ and $\zeta$ (precisely, see \cite[Equation $(8.4)$]{KimTonegawa}.
    \item Suppose $\mathrm{diam}\, Y\ge 4\, j^{-2}$, with still $\mathrm{diam}\, Y_+< j^{-2}$ by assumption. This means that we can decompose 
    \begin{equation*}
    E(r_1,\rho_1)= E(r_1, \rho_1)_+ \cup E(r_1, \rho_1)_-\,,
    \end{equation*}
    where the two compact sets, that we denote $E_+$ and $E_-$ for short, are disjoint. Indeed, let $y,z\in Y_+$ be the points of $Y_+$ achieving, respectively, the maximum and minimum distance from $H_0$. Due to the symmetry of $Y$, it follows that $\mathrm{diam} \, Y= 2\,\d(y)$ and $\mathrm{diam}\, Y^+=\abs{y-z}$. The assumption then rewrites as 
    \begin{equation*}
        \d(y) \ge 2\, j^{-2}, \quad \abs{y-z} < j^{-2}\,,
    \end{equation*}
    which implies 
    \begin{equation*}
    \d(z) >j^{-2}\, ,
    \end{equation*}
    since $\d(y) = \d(z) + \abs{y-z}$. If $x\in E(r_1,\rho_1)$, then either $\d(\sigma(y))-\rho_1\leq\d(x)\leq\d(\sigma(z))+\rho_1$ or $\d (z)-\rho_1\leq \d(x) \leq \d(y)+\rho_1$ and $\rho_1\in (0,j^{-2})$, showing that $E_+$ and $E_-$ are disjoint. The thesis then follows by \autoref{lem: massive drop area implies admissible} together with the computations of \cite{KimTonegawa}.
\end{itemize}
Finally, (b) of \autoref{ar8}, namely the property of being a change of volume-controlled deformation was investigated by \cite[Appendix $A.2$]{STCanonical} and it can be adapted to our assumption (3) similarly to the above described case. 
\end{proof}
\end{lemma}
\begin{remark}
\label{remark: lemma 8.1 Y far from H0}
    Consider all the assumptions of \autoref{lemma 8.1} but (3), and replace it with 
    \begin{itemize}
        \item [3')] $Y\subset H_0^\perp$, it has no more than $\nu$ elements, $\mathrm{dist} (Y,H_0)> 2\rho$ and $\mathrm{diam}\, Y<j^{-2}$.
    \end{itemize}
    Then, \eqref{itg2} still holds, possibly with slightly different $\gamma$ and $j_0$, but still depending only on the same parameters. Indeed, assuming $Y\subset \R^{n+1}_+$ for simplicity, we can see that for each $i=1,2$, 
    \begin{equation*}
    \mathrm{dist} (E_i(a), H_0) > \mathrm{dist} (Y,H_0)-2\rho_1>0\, ,
    \end{equation*}
    since $2\rho_1 < 2\rho < \mathrm{dist}(Y,H_0)$ by assumption. This means that we can build a symmetric  function $\widetilde f_a$ which is $f_a$ on $\R^{n+1}_+$ (in the notation of the previous proof), yielding the thesis.
\end{remark}

The next is \cite[Lemma $6.1$]{AllardFirstVariation} (see also \cite[Lemma $8.3$]{KimTonegawa}). We will make use of this for sets contained in a single halfspace which are far enough from $H_0$. We do not present the proof, but the main ideas will be showed in the next Lemma, which is a symmetric version of this.
\begin{lemma}
\label{lemma4.21 KT}
Suppose
\begin{enumerate}
\item
$\nu\in \Na$, $\xi\in (0,1)$, $M\in (1,\infty)$, $0<r_0<R<\infty$, and $V\in {\bf V}_n({\R}^{n+1})$,
\item
$Y\subset H_0^{\perp}$ has no more than $\nu+1$ elements,
\item
$(M+1){\rm diam}\, Y\leq R$,
\item
$r_0<\frac{\mathrm{diam} \,Y}{3(\nu+1)}$,
\item
$R\|\delta V\|(B_r(y))\leq \xi \|V\|(B_r(y))$ for all $y\in Y$ and $r\in (r_0,R)$,
\item
$\int_{{\bf G}_{n}(B_r(y))}\|S-T\|\, dV(x,S)\leq \xi \|V\|(B_r(y))$ for all $y\in Y$ and $r
\in (r_0,R)$.
\end{enumerate}
Then there are $V_1,\, V_2 \in {\bf V}_n({\R}^{n+1})$ and a partition of $Y$ 
into subsets $Y_0,\, Y_1,\, Y_2,$ such that 
\begin{equation}
V\geq V_1+V_2 \, ,
\label{4-21-7 KT}
\end{equation}
\begin{equation}
\mbox{neither } Y_1\mbox{ nor } Y_2\mbox{ has more than }\nu\mbox{ elements},
\label{4-21-8 KT}
\end{equation}
\begin{equation}
(M\,{\rm diam}\, Y)\norm{\delta V_k}(B_r(y))\leq 2M(\nu+1)(3\nu\, M)^{n+1}(\exp \xi)\xi \norm{V_k}(B_r(y))
\label{4-21-9 KT}
\end{equation}
\centerline{for all $y\in Y_k$, $r\in (r_0,M\, {\rm diam}\, Y)$ and $k=1,2$\, ,}
\begin{equation}
\int_{{\bf G}_n(B_r(y))}\norm{S-T}\, dV_k(x,S)\leq M(3\nu\, M)^n (\exp\xi)\xi \norm{V_k}(B_r(y))
\label{4-21-10 KT}
\end{equation}
\centerline{for all $y\in Y_k$, $r\in (r_0,M\, {\rm diam}\, Y)$ and $k=1,2$\, ,}
\begin{equation}
V_k\geq V\mres\{x\in {\mathbb R}^{n+1}:{\rm dist}\, (T^{\perp}(x),Y_k)\leq r_0\} \times \bG(n+1,n)\,\, \mbox{ for } k=1,2 \, 
\label{4-21-11 KT}
\end{equation}
\begin{equation}
\begin{split}
&\set{\left(1+\frac1M\right)^n+\frac{\nu+1}
{M}}  (\exp\xi)\frac{\norm{V}(\set{x:{\rm dist}\, (x,Y)\leq R})}{\omega_n R^n} \\
&\geq \sum_{y\in Y_0} \frac{\norm{V}(B_{r_0}(y))}{\omega_n r_0^n} 
+\sum_{k=1,2} \frac{\norm{V_k}(\set{x :{\rm dist}\,(x,Y_k)\leq M\, {\rm diam}\, Y})}{\omega_n 
(M\, {\rm diam}\, Y)^n}\, .
\end{split}
\label{4-21-12 KT}
\end{equation}
\end{lemma}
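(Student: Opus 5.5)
The statement is Allard's Lemma 6.1 (cf. \cite[Lemma 8.3]{KimTonegawa}), and I would reproduce Allard's combinatorial monotonicity argument. The plan is an induction on the scale at which the points of $Y$ separate.

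First, by (5) and \autoref{lem: monotonicity formula}, for each $y\in Y$ the quantity $\exp(\xi r/R)\,r^{-n}\norm V(B_r(y))$ is nondecreasing on $(r_0,R)$. Set $d\define\mathrm{diam}\,Y$. Since $Y$ has at most $\nu+1$ points lying on the line $H_0^\perp$, pigeonhole gives a gap: some open interval of $H_0^\perp$ of length at least $d/\nu$ contains no point of $Y$. This gap splits $Y$ into two nonempty groups $Y'_1$ and $Y'_2$, each with at most $\nu$ elements. The elements separated by more than $d/(3\nu M)$, which hypothesis (4) guarantees exceeds $r_0$, are then grouped by successive gaps. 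Points of $Y$ that remain isolated below scale $r_0$ are placed in $Y_0$; this gives \eqref{4-21-8 KT}.

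Next, I would build $V_k$ by localizing $V$ with cutoffs $\phi_k(T^\perp x)$. Here $\phi_k$ is a Lipschitz function of the vertical coordinate only. It equals $1$ within distance $\approx d/(3\nu)$ of $Y_k$ and vanishes on the gap, with $|\nabla\phi_k|\lesssim \nu/d$. Set $V_k\define V\mres(\phi_k\circ T^\perp)$. Because the supports are disjoint, $V\geq V_1+V_2$, which is \eqref{4-21-7 KT}, and \eqref{4-21-11 KT} is immediate.

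The first-variation estimate \eqref{4-21-9 KT} comes from
\[
\delta V_k(g)=\delta V(\phi_k g)-\int g\cdot S(\nabla\phi_k)\,dV .
\]
Since $\nabla\phi_k$ is vertical, $|S(\nabla\phi_k)|\le\norm{S-T}\,|\nabla\phi_k|$. This term is therefore controlled by hypothesis (6), while the first term is controlled by (5). The price is the factor $\nu M/d$ times $\xi$, up to comparing the $V$-masses of balls of radius up to $Md$ with those of radius $R\geq(M+1)d$. That comparison is where the factor $(3\nu M)^{n+1}$ enters, via monotonicity. Estimate \eqref{4-21-10 KT} follows the same way from (6).

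Finally, \eqref{4-21-12 KT} is proved by comparing densities. A ball of radius $R$ around any $y\in Y$ contains the balls of radius $Md$ around the points of each $Y_k$, since $(M+1)d\le R$. The balls of radius $r_0$ around points of $Y_0$ are disjoint from the supports of the $V_k$ and from each other. Monotonicity from $r_0$ or $Md$ up to $R$ produces the factor $\exp\xi$. The volume loss is $(R+d)^n/R^n\le(1+1/M)^n$, with an additive error $(\nu+1)/M$ from overlaps of the at most $\nu+1$ neighborhoods.

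I expect the main obstacle to be this bookkeeping: choosing the gap scale so that the overlap region has small relative mass, and arranging that the cutoff error stays proportional to $\xi$ uniformly in $r\in(r_0,Md)$. I would follow Allard's original choices of constants exactly rather than optimize them.
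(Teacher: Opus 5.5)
There is a genuine gap, and it lies in your choice of $Y_0$. In Allard's proof, which the paper reproduces in symmetric form in the proof of \autoref{lemma4.21} (cf.\ \eqref{e: lemma 6.1 definition Y0}), $Y_0$ is not a geometric set of ``isolated'' points. With $\eta\define M\,\mathrm{diam}\,Y$ it is defined through the density of $V$:
\[
Y_0\define\set{y\in Y\,\colon\,\eta^{-n}\norm{V}(B_\eta(y))>M\,r_0^{-n}\norm{V}(B_{r_0}(y))}\,.
\]
A criterion depending only on the positions of the points of $Y$ cannot do the two jobs this set does.

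\emph{First job: the additive term in \eqref{4-21-12 KT}.} The set $Y_0$ is the sole source of the term $\frac{\nu+1}{M}$ there. Each $y\in Y_0$ contributes
\[
\frac{\norm V(B_{r_0}(y))}{\omega_n r_0^n}<\frac1M\,\frac{\norm V(B_\eta(y))}{\omega_n\eta^n}\le\frac{e^\xi}{M}\,\frac{\norm V(\set{\mathrm{dist}(\cdot,Y)\le R})}{\omega_nR^n}\,,
\]
and there are at most $\nu+1$ such points. Your disjointness argument cannot give this. The $Y_0$ terms are normalised by $r_0^n$, so disjointness of the small balls is irrelevant. Monotonicity alone bounds each term only by $e^\xi$ times the large-scale density ratio, which is of order one rather than $1/M$.

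\emph{Second job, and the more serious issue: estimates \eqref{4-21-9 KT}--\eqref{4-21-10 KT}.} Hypotheses (5)--(6) control $\norm{\delta V}$ and the tilt relative to $\norm V(B_r(y))$. The conclusions must hold relative to $\norm{V_k}(B_r(y))$. Set $\rho\define\mathrm{diam}\,Y/(3\nu)$; hypothesis (4) gives $r_0<\rho$.

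For $r<\rho$ the two masses agree, since your cut-off equals $1$ on $B_\rho(y)$. For $r\in[\rho,\eta)$ the ball meets the transition region and the other group of sheets, and $\norm{V_k}(B_r(y))$ can be far smaller than $\norm V(B_r(y))$. For example, take a low-multiplicity sheet through $y$, heavy sheets elsewhere, and a little slightly tilted, nearly minimal mass placed where $0<\phi_k\ll1$. This is allowed by (5)--(6). Yet it makes $\int g\cdot S(\nabla\phi_k)\,dV$ exceed every fixed multiple of $\xi\norm{V_k}(B_r(y))/\rho$.

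What is needed is a \emph{lower} bound on small-scale mass. For $y\notin Y_0$ one has
\[
\norm V(B_\eta(y))\le Me^{\xi}(\eta/\rho)^n\norm V(B_\rho(y))\le Me^{\xi}(\eta/\rho)^n\norm{V_k}(B_r(y))\,,
\]
see \eqref{e: lemma 6.1 points not in Y0}. Monotonicity only bounds small-scale density \emph{above} by large-scale density. So your comparison of balls of radius up to $M\,\mathrm{diam}\,Y$ with balls of radius $R$ runs in the wrong direction. The factor $M(3\nu M)^n=M(\eta/\rho)^n$ in the constants comes exactly from the non-$Y_0$ condition. The remaining factor $\eta/\rho$ comes from the slope $1/\rho$ of the cut, and $2(\nu+1)$ from averaging over levels and points.

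Once $Y_0$ is defined this way, the rest of your plan goes through. Your Lipschitz cut-off $\phi_k\circ T^\perp$, with its transition in the middle third of a gap of length at least $3\rho$, is a legitimate substitute for Allard's choice of a good slicing level via \cite[Theorem 4.10(2)]{AllardFirstVariation}. Since $\nabla\phi_k\in H_0^\perp$, one has $|S(\nabla\phi_k)|\le\norm{S-T}\,|\nabla\phi_k|$, and the averaging over levels is avoided.

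Two smaller errors:
\begin{itemize}
\item Hypothesis (4) only gives $r_0<\mathrm{diam}\,Y/(3(\nu+1))$, not $r_0<\mathrm{diam}\,Y/(3\nu M)$.
\item One gap suffices. Grouping by successive gaps would produce more than the two families $Y_1,Y_2$ the statement allows.
\end{itemize}
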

\begin{remark}
\label{remark: lemma 8.3 KT for symmetric sets}
    Looking at the original proof by \cite{AllardFirstVariation}, one can see that the described partition can be made in such a way that whenever $V$ is a symmetric varifold and $Y\subset H_0^\perp$ is a symmetric set of at most $2\, \nu+2$ elements, such that both $Y_+$ and $Y_-$ satisfy the assumption of \autoref{lemma4.21 KT}, then the partitions obtained, say $Y_+=Y_0\cup Y_1\cup Y_2$ and $Y_-=Z_0\cup Z_1\cup Z_2$, satisfy $\sigma(Y_k)=Z_k$ for $k=1,2,3$. Similarly, if $V_1,V_2$ and $W_1,W_2$ are the varifold associated to respectively $Y_+$ and $Y_-$, then $\sigma_\sharp V_k= W_k$ for $k=1,2$.
\end{remark}

\begin{lemma}
\label{lemma4.21}
Suppose
\begin{enumerate}
\item
$\nu\in \Na$, $\xi\in (0,1)$, $M\in (1,\infty)$, $0<r_0<R<\infty$, and $V\in {\bf V}_n({\R}^{n+1})$ is a symmetric varifold,
\item
$Y\subset H_0^{\perp}$ is symmetric, it has no more than $2\,\nu+2$ elements and $Y\cap H_0 =\emptyset$,
\item
$(M+1){\rm diam}\, Y\leq R$,
\item
$r_0<\frac{\mathrm{diam} \,Y}{3(2\,\nu+1)}$,
\item
$R\|\delta V\|(B_r(y))\leq \xi \|V\|(B_r(y))$ for all $y\in Y$ and $r\in (r_0,R)$,
\item
$\int_{{\bf G}_{n}(B_r(y))}\|S-T\|\, dV(x,S)\leq \xi \|V\|(B_r(y))$ for all $y\in Y$ and $r
\in (r_0,R)$.
\end{enumerate}
Then there are $V_1,\, V_2, \, V_3 \in {\bf V}_n({\R}^{n+1})$ and a partition of $Y$ 
into subsets $Y_0,\, Y_1,\, Y_2, \, Y_3$ such that 
\begin{equation}
\label{e: lemma 6.1 geometry Yk}
   Y_1=\sigma(Y_2) \  \mbox{and} \ V_1=\sigma_\sharp V_2
\end{equation}
\begin{equation}
\label{e: lemma 6.1 geometry Y3}
    Y_0 \ \mbox{and} \ Y_3 \ \mbox{are symmetryc}\footnote{some of these sets may be empty, in which case we still call them symmetric for simplicity} \ \mbox{and} \ V_3 \ \mbox{is symmetric} \, ,
\end{equation}
\begin{equation}
V\geq V_1+V_2+V_3 \, ,
\label{4-21-7}
\end{equation}
\begin{equation}
\label{4-21-8}
\mbox{neither } Y_1\mbox{ nor } Y_2\mbox{ has more than }\nu+1 \mbox{ elements}\, ,
\end{equation}
\begin{equation}
\label{4-21-8-2}
Y_3 \mbox{ has no more than } 2\,\nu \mbox{ elements}\, ,
\end{equation}
\begin{equation}
(M\,{\rm diam}\, Y)\|\delta V_j\|(B_r(y))\leq 2M(2\,\nu+2)(3(2\,\nu+1) M)^{n+1}(\exp \xi)\xi \|V_j\|(B_r(y))
\label{4-21-9}
\end{equation}
\centerline{for all $y\in Y_k$, $r\in (r_0,M\, {\rm diam}\, Y)$ and $k=1,2,3$\, ,}
\begin{equation}
\int_{{\bf G}_n(B_r(y))}\|S-T\|\, dV_k(x,S)\leq M(3(2\,\nu+1) M)^n (\exp\xi)\xi \|V_k\|(B_r(y))
\label{4-21-10}
\end{equation}
\centerline{for all $y\in Y_k$, $r\in (r_0,M\, {\rm diam}\, Y)$ and $k=1,2,3$\, ,}
\begin{equation}
V_k\geq V\mres\set{x\in {\mathbb R}^{n+1}:{\rm dist}\, (T^{\perp}(x),Y_k)\leq r_0} \times \bG(n+1,n)\,\, \mbox{ for } k=1,2,3 \, 
\label{4-21-11}
\end{equation}
\begin{equation}
\begin{split}
&\set{\left(1+\frac1M\right)^n+\frac{2(\nu+1)}
{M}}  (\exp\xi)\frac{\norm{V}(\{x:{\rm dist}\, (x,Y)\leq R\})}{\omega_n R^n} \\
&\geq \sum_{y\in Y_0} \frac{\norm{V}(B_{r_0}(y))}{\omega_n r_0^n} 
+\sum_{k=1,2,3} \frac{\norm{V_k}(\set{x :{\rm dist}\,(x,Y_k)\leq M\, {\rm diam}\, Y})}{\omega_n 
(M\, {\rm diam}\, Y)^n}.
\end{split}
\label{4-21-12}
\end{equation}
\begin{remark}
    Note that \eqref{4-21-7} and \eqref{4-21-11} imply that 
    \begin{equation}
    \label{e: lemma 6.1 Y1 and Y2 are far}
    \begin{split}
        &\norm{V} \left( \set{x\in \R^{n+1} \, \colon \, \mathrm{dist}\left(T^\perp(x), (Y_1\cup Y_2)\right)\leq r_0)} \right)\\
        & \qquad= \norm{V} \left(\set{x:{\rm dist}\, (x,Y_1)\leq r_0}\right)+ \norm{V} \left(\set{x:{\rm dist}\, (x,Y_2)\leq r_0}\right)\, .
        \end{split}
    \end{equation}
    Indeed, 
    \begin{equation*}
        \begin{split}
           &\norm{V} \left( \set{x\in \R^{n+1} \, \colon \, \mathrm{dist}\left(T^\perp(x), (Y_1\cup Y_2)\right)\leq r_0)} \right)\\
           &\qquad \ge \norm{V_1+V_2} \left( \set{x\in \R^{n+1} \, \colon \, \mathrm{dist}\left(T^\perp(x), (Y_1\cup Y_2)\right)\leq r_0)} \right) \\
           &\qquad  \ge \sum_{k=1,2} \norm{V_k} \left(\set{x:{\rm dist}\, (x,Y_k)\leq r_0}\right)\\
           &\qquad \ge \sum_{k=1,2} \norm{V} \left(\set{x:{\rm dist}\, (x,Y_k)\leq r_0}\right)\\
           &\qquad \ge \norm{V} \left( \set{x\in \R^{n+1} \, \colon \, \mathrm{dist}\left(T^\perp(x), (Y_1\cup Y_2)\right)\leq r_0)} \right) \, .
        \end{split}
    \end{equation*}
\end{remark}
\begin{proof}
    Due to assumption $(5)$ and \autoref{lem: monotonicity formula}, we have 
    \begin{equation}
    \label{e: lemma 6.1 monotonicity}
        \frac{\norm{V} (B_r(y))}{\omega_n \, r^n}\leq (\exp \xi)\,  \frac{\norm V(B_s(y))}{\omega_n \, s^n}
    \end{equation}
    for every $y\in Y$ and $r_0<r<s<R$.  Let 
    \begin{equation*}
        \eta\define M\, \mathrm{diam}\, Y, \quad \widetilde \nu\define 2\,\nu+1, \quad\rho\define\frac{\mathrm{diam}\, Y}{3\widetilde    \nu} \, .
    \end{equation*}
   Note that now (4) reads as $r_0<\rho$ and that 
   \begin{equation}
   \label{e: lemma 6.1 eta over rho}
       \frac{\eta}{\rho}=3\widetilde \nu M\, .
   \end{equation}
   We define 
    \begin{equation}
    \label{e: lemma 6.1 definition Y0}
        Y_0\define \set{y\in Y\, \colon\, \frac{\norm{V} (B_\eta(y))}{ \eta^n}> M \frac{\norm{V}(B_{r_0}(y))}{ r_0^n}}\, ,
    \end{equation}
    which is a symmetric set by symmetry of $V$. By \eqref{e: lemma 6.1 definition Y0} and \eqref{e: lemma 6.1 monotonicity} we have 
    \begin{equation}
    \label{e: lemma 6.1 estimate mass Y0}
    \begin{split}
        \sum_{y\in Y_0} \frac{\norm V(B_{r_0}(y))}{\omega_n \, r_0^n}&< \frac{1}{M} \sum_{y\in Y_0} \frac{\norm V(B_\eta(y))}{\omega_n \, \eta^n}\leq \frac{2\,\nu+2}{M}\, \max_{y\in Y_0} \frac{\norm V(B_\eta(y))}{\omega_n \, \eta^n}\\
        &\leq \frac{(2\,\nu+2)\exp \xi}{M} \max_{y\in Y_0}\frac{\norm V(B_R(y))}{\omega_n \, R^n}\\
        &\leq \frac{(2\,\nu+2)\exp \xi}{M} \ \frac{\norm V (\set{x\, \colon \, \mathrm{dist}(x,Y)\leq R})}{\omega_m \, R^n}\, .
        \end{split}
    \end{equation}
    We now consider the other points. Let $\alpha\leq 2\,\nu+2$ be the cardinality of $Y\setminus Y_0$. Note that $\alpha$ is even by $(2)$ and the symmetry of $Y_0$. Let also $\beta=\sfrac\alpha2$. We write 
    \begin{equation*}
        Y\setminus Y_0= \set{y_1, \ldots, y_\beta, \, y_{\beta+1},\ldots, y_\alpha}\,
    \end{equation*}
    in such a way that $\d(y_i)\leq \d(y_{i+1})$ for each $i$. Moreover, by symmetry, we have 
    \begin{equation*}
        y_i=\sigma(y_{\alpha+1-i})\, ,
    \end{equation*}
    for $i=1,\ldots, \beta$. Geometrically, we are then labeling $Y\setminus Y_0$ from bottom to top. We claim the existence of an index $\l$ such that $\abs{y_{\l+1}-y_\l}\ge 3\, \rho$. Indeed, if this were false, we would have 
    \begin{equation*}
        \mathrm{diam}\, Y\leq \abs{y_\alpha -y_1}=\abs{y_\alpha-y_{\alpha-1} } + \ldots + \abs{y_2-y_1}< 3(\alpha-1) \, \rho =\frac{\alpha-1}{2\, \nu+1} \, \mathrm{diam}\, Y\, ,
    \end{equation*}
    yielding a contradiction as $\alpha\leq 2\,\nu+2 $. Let then
    \begin{equation}
    \label{e: lemma 6.1 l bar}
        \bar \l= \mathrm{argmin} \, \set{\l \, \colon \, \abs{y_{\l+1}-y_\l} \ge 3 \rho}\, ,
    \end{equation}
    and again, by symmetry, $\bar \l\leq \beta$. Geometrically, each such $\l$ represents a \virgolette{hole} and $\bar \l$ is the first \virgolette{hole} encountered moving along $Y\setminus Y_0$.\\

    We must distinguish between two cases, based on whether $\bar \l$ is $\beta$ or not. See \autoref{fig:lemma 6.1} for a geometric idea.
\begin{figure}[htbp]
  \centering
  \begin{minipage}{\textwidth}
    \centering
    \includegraphics[width=0.3\textwidth]{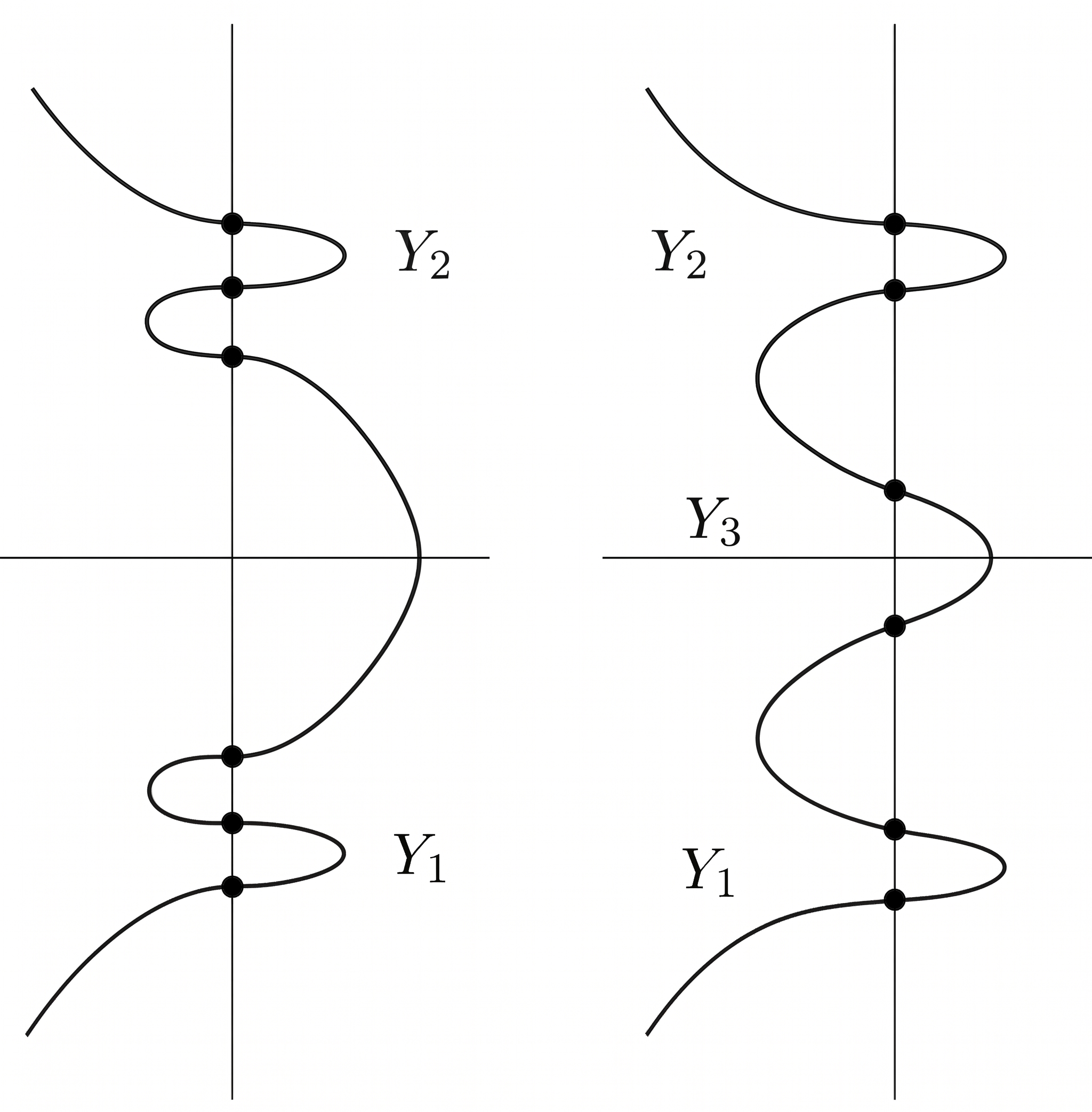}
    \caption{This is the picture to have in mind to distinguish from $\bar \l=\beta$ (left-hand-side) to $\bar \l<\beta$ (right-hand-side). Here we assumed $Y_0=\emptyset$ for simplicity.}
     \label{fig:lemma 6.1}
  \end{minipage}
\end{figure}
        
\vspace{0.5cm}   
\noindent\textbf{First case: suppose $\bar \l<\beta$.} We define 
    \begin{align*}
        &Y_1\define\set{y_\l \ \mathrm{with} \ \l\leq \bar \l}\\
        &Y_2\define\set{y_\l \ \mathrm{with} \ \l\ge \overline \alpha+1-\bar \l} =\sigma(Y_1)\\
        &Y_3\define \set{y_\l, \, \sigma(y_\l) \ \mathrm{with} \ \bar \l<\l \leq \beta}\, .
    \end{align*}
It is clear from the definition that $Y_1$ and $Y_2$ have at most $\beta-1$ elements each, so that $Y_1\cup Y_2$ has at most $\alpha-2$ elements, showing \eqref{4-21-8} and \eqref{4-21-8-2} in this case. Similarly, $Y_3$ has also at most $\alpha-2$ elements. For each $\tau>0$, let 
\begin{equation}
\label{e: lemma 6.1 definition V j tau}
\begin{gathered}
    V_{1,\tau}\define V \mres \left(\set{d<-\tau}\times \bG(n+1,n)\right), \quad V_{2,\tau}\define V \mres \left(\set{d>\tau}\times \bG(n+1,n)\right) ,\\ V_{3,\tau}\define V \mres \left(\set{-\tau< d<\tau}\times \bG(n+1,n)\right)\, .
\end{gathered}
\end{equation}
Using \cite[Theorem $4.10(2)$, page 442]{AllardFirstVariation} together with (5)-(6) (see \cite[page 455]{AllardFirstVariation} for more details), we have that for every $k=1,2,3$ and $y\in Y_k$,
\begin{equation*}
    \int_{\d(y_{\bar \l} )+ \rho}^{\d(y_{\bar \l}) + 2\rho} \norm{\delta V_{k,\tau}} (B_\eta(y))\, d\tau \leq 2\xi \, \norm{ V} (B_\eta (y))\, .
\end{equation*}
Thus,
\begin{equation*}
     \int_{\d(y_{\bar \l} )+ \rho}^{\d(y_{\bar \l}) + 2\rho} \sum_{k=1}^3 \sum_{y\in Y_k} \frac{\norm{\delta V_{k,\tau}} (B_\eta(y))}{\norm{ V} (B_\eta (y))}\, d\tau \leq 2\xi(2\,\nu+2)
\end{equation*}
and then there is $t\in \left(\d(y_{\bar \l} )+ \rho,\d(y_{\bar \l}) + 2\rho\right))$ such that 
\begin{equation*}
    \sum_{k=1}^3 \sum_{y\in Y_k} \frac{\norm{\delta V_{k,t}} (B_\eta(y))}{\norm{ V} (B_\eta (y))} \leq \frac{2\xi(2\,\nu+2)}{\rho}
\end{equation*}
for any $y\in \cup_{k=1}^3Y_k$. In particular, for any $k=1,2,3$ and $y\in Y_k$, we have 
\begin{equation}
\label{e: lemma 6.1 uniform variation estimate}
    \norm{\delta V_{k,t}}(B_\eta(y))\leq \frac{2\xi(2\,\nu+2)}{\rho} \norm{V}(B_\eta(y))\, .
\end{equation}
We define $V_k\define V_{k,t}$ for $ k=1,2,3$. By \eqref{e: lemma 6.1 definition V j tau} it is clear that \eqref{4-21-7} holds true. By the choice of $t$, it follows that for any $y\in Y_1$ (similar arguments can be done for $k=2,3$),
\begin{equation*}
    V_1\mres \bG_n(B_\rho(y))=V \mres \bG_n(B_\rho(y))\, .
\end{equation*}
This implies that (5)-(6) hold true for $V_1$ in place of $V$ for $r\in(r_0,\rho)$, that is 
\begin{gather}
   \label{e: lemma 6.1 proof variation small radius} \eta \norm{\delta V_1}(B_r(y))\leq R \norm{\delta V_1}(B_r(y))\leq \xi \norm{V_1}(B_r(y))\\
   \label{e: lemma 6.1 proof tilt small radius} \int_{\bG_n(B_r(y))} \norm{S-T}dV_1(x,S)\leq \xi \norm{V_1}(B_r(y))\, .
\end{gather}
Since $y\not \in Y_0$ and $r_0<\rho$, \eqref{e: lemma 6.1 monotonicity} implies 
\begin{equation}
\label{e: lemma 6.1 points not in Y0}
    \norm{V} (B_{\eta}(y))\leq M \norm{V}(B_{r_0}(y)) \left(\frac{\eta}{r_0}\right)^n\leq M (\exp \xi)\norm{ V} (B_\rho(y)) \left(\frac{\eta}{\rho}\right)^n\, .
\end{equation}
For every $r\in[\sigma,\eta)$, by \eqref{e: lemma 6.1 uniform variation estimate}, \eqref{e: lemma 6.1 points not in Y0}, \eqref{e: lemma 6.1 eta over rho} we have 
\begin{equation}
\label{e: lemma 6.1 proof variation large radius}
    \begin{split}
        \eta \norm{\delta V_1}(B_r(y)) &\leq \eta \norm{\delta V_1} (B_\eta(y)) \leq \frac{2\xi \eta (2\,\nu+2)}{\rho} \norm{ V}(B_\eta(y))\\
        &\leq\frac{2\xi \eta^{n+1}(2\,\nu+2)}{\rho^{n+1}} M (\exp \xi) \norm V(B_\rho(y)) \\
        &=2(2\,\nu+2)M (3\widetilde \nu M)^{n+1}(\exp \xi) \xi \norm{V_1} (B_\rho(y))\, .
    \end{split}
\end{equation}
In addition, for $r\in[\sigma,\eta)$, by (6) and using again \eqref{e: lemma 6.1 points not in Y0} and \eqref{e: lemma 6.1 eta over rho}, we also have 
\begin{equation}
\label{e: lemma 6.1 proof tilt large radius}
    \begin{split}
     \int_{\bG_n(B_r(y))} \norm{S-T}dV_1(x,S) &\leq \xi\norm{V}(B_\eta(y))  \\
     &\leq  M (\exp \xi) \xi \left(\frac{\eta}{\rho}\right)^n \norm{V}(B_\rho(y))\\
     &\leq M (\exp \xi) \xi (3(2\,\nu+2)M)^n\norm{V_1}(B_r(y))\, .
    \end{split}
\end{equation}
\eqref{e: lemma 6.1 proof variation small radius}-\eqref{e: lemma 6.1 proof variation large radius} and \eqref{e: lemma 6.1 proof tilt small radius}-\eqref{e: lemma 6.1 proof tilt large radius} show, respectively, \eqref{4-21-9} and \eqref{4-21-10}.

Let $y\in Y$. Note that
\begin{equation*}
    \bigcup_{k=1}^3\set{x\, \colon \mathrm{dist} (x, Y_k)\leq \eta}\subseteq B_{(M+1)\mathrm{diam} Y}(y)\, ,
\end{equation*}
and that $(M+1)\, \mathrm{diam}\, Y=\frac{(M+1)\, \eta}{M}$. This, together with \eqref{4-21-7} and \eqref{e: lemma 6.1 monotonicity} shows that 
\begin{equation}
\label{e: lemma 6.1 estimate mass Y123}
\begin{split}
    \frac{\sum_{k=1,2,3} \norm{V_k} \set{x\, \colon \mathrm{dist} (x, Y_k)\leq \eta}}{\omega_n\, \eta^n}&\leq \frac{\norm{V} (B_{(M+1)\mathrm{diam} Y}(y))}{\omega_n \, \eta^n}\\
    &\leq \left(1+\frac1M\right)^n\exp \xi \, \frac{ \norm{ V} (\set{x\, \colon \, \mathrm{dist}(x,Y)\leq R)}}{\omega_n \, R^n}\, .
    \end{split}
\end{equation}
Finally, \eqref{4-21-11} follows by \eqref{e: lemma 6.1 estimate mass Y0} and \eqref{e: lemma 6.1 estimate mass Y123}.

\vspace{0.5cm}
\noindent\textbf{Second case: suppose $\bar \l=\beta$.} We define 
\begin{equation*}
    Y_1\define (Y\setminus Y_0)_-=(Y\setminus Y_0) \cap \R^{n+1}_-, \quad Y_2\define (Y\setminus Y_0)_+=(Y\setminus Y_0)\cap \R^{n+1}_+, \quad Y_3\define \emptyset.
\end{equation*}
We then define $V_1$ and $V_2$ similarly to the previous case, whereas we put $V_3= 0$. The thesis follows by the same computations.
\end{proof}
\end{lemma}
\begin{lemma}
\label{lemma 8.4}
Let $n, \, \nu \in\Na$, $\widetilde \nu=2\,\nu+1$ and $\lambda\in (1,2)$. Corresponding to $n,\nu$ and $\lambda$, there exist $\gamma\in (0,1)$ 
and ${\widetilde M}\in (1,\infty)$ 
with the following property. Suppose 
\begin{enumerate}
\item
$0<r_0<R<\infty,\, T\in {\bf G}(n+1,n),\, V\in {\bf V}_n(\R^{n+1})$ is a symmetric varifold;
\item
$Y\subset H_0^{\perp}$ is symmetric, it has  no more than $2\,\nu+2$ elements and $Y\cap H_0=\emptyset$;
\item
$\left((1+3(2\,\nu+1))^2+{\widetilde M}^2\right)^{\frac12} r_0 <R$;
\item
${\rm diam}\, Y\leq \gamma R$;
\item
$R\|\delta V\|(B_r(y))\leq \gamma \|V\|(B_r(y))$ for all $y\in Y$ and $r\in (r_0,R)$;
\item
$\int_{{\bf G}_n(B_r(y))}\|S-T\|\, dV(x,S)\leq \gamma \|V\|(B_r(y))$ for all $y\in Y$ and
$r\in (r_0,R)$.
\end{enumerate}
Then there exists a partition of $Y$ into symmetric subsets $Y_0,Y_1,\ldots, Y_J$ such that
\begin{itemize}
\item[(i)] for all $j\in \set{1,\ldots, J}$,  either $\mathrm{diam} \, Y_j\leq 3 \widetilde\nu\, r_0$, or $\mathrm{diam} \, Y_j^+=\mathrm{diam} \, Y_j^-\leq 3 \widetilde\nu\, r_0$;
\item[(ii)] we have
\begin{equation}
\begin{split}
\lambda &\frac{\norm{V}(\set{x : {\rm dist}\,(x,Y)\leq R})}{\omega_n R^n}\geq
\sum_{y\in Y_0} \frac{\|V\|(B_{r_0}(y))}{\omega_n r_0^n} \\
&+\sum_{j=1}^J \frac{\norm{V}\left(\set{x: {\rm dist}\,(T^{\perp}(x),Y_j)\leq 
r_0,\, \abs{T(x)}\leq {\widetilde M}r_0}\right)}{\omega_n ({\widetilde M}r_0)^n}\, .
\end{split}
\label{4-21-18}
\end{equation}

\label{thm4.21.1}
\end{itemize}
\begin{proof}
We adopt the same notation of the proof of \autoref{lemma4.21}. Before going into the details, let us give a brief idea of the proof. We distinguish in two cases, based on $Y$ having a small diameter or not. In the first case, the thesis easily follows. In the latter, we use \autoref{lemma4.21} to obtain $Y_1,Y_2, Y_3$ (with $Y_3$ possibly empty). We then check again the diameter of the elements of the partition; if some of their diameter is small, we can estimate the mass of $V$ around it. Otherwise, we need to partition them again. If $\mathrm{diam}\, Y_1=\mathrm{diam}\, Y_2$ happens to be large, we partition both $Y_1$ and $Y_2$ by \autoref{lemma4.21 KT}, obtaining symmetric results as observed in \autoref{remark: lemma 8.3 KT for symmetric sets}; the reason why we apply this partition method is because $Y_1$ is built to be sufficiently far from $H_0$. If instead $\mathrm{diam}\, Y_3$ is large, then we partition it using again \autoref{lemma4.21}. Note that the two described scenarios can happen simultaneously. We repeat this process on each newly generated partition at most $\nu+1$ times, until we get subsets of small diameter (to be more precise, until we get sets whose restrictions to the upper and lower halfspace have small diameter).
\vspace{0.7cm}

\noindent\textbf{$Y$ has Small diameter}: suppose that $\mathrm{diam}\, Y\leq 3 \widetilde\nu\,  r_0$. Then, set $J=1$, $Y_1=Y$ and $Y_0=\emptyset$. This implies that for any $y\in Y$
\begin{equation}
\label{e: proof Lemma 8.4 0}
    \set{x\, \colon \, \mathrm{dist}(T^\perp(x), Y_1)\leq r_0, \ \abs{T(x)}\leq \widetilde M r_0}\subseteq B_{r_0((1+3\tilde \nu)^2+\tilde M^2)^\frac12}(y)\, ;
\end{equation}
indeed, if $x$ belongs to the set at the left-hand-side of \eqref{e: proof Lemma 8.4 0}, there exists $\widetilde y\in Y$ such that $\abs{T^\perp(x)- \widetilde y}\leq r_0$ and $\abs{y-\widetilde y}\leq \mathrm{diam}\, Y$, yielding 
\begin{equation*}
    \abs{T^\perp(x)-y}=\abs{T^\perp(x-y)}\leq r_0+3 \widetilde\nu\, r_0\, .
\end{equation*}
Hence,
\begin{equation*}
    \abs{x-y}=\abs{T(x-y)+T^\perp(x-y)}\leq \sqrt{(r_0+3 \widetilde\nu \,r_0)^2+(\widetilde M r_0)^2}\, .
\end{equation*}
We write 
\begin{equation}
\label{e: proof Lemma 8.4 1}
    \frac{\norm{V}(B_{r_0((1+3\widetilde \nu)^2+\widetilde M})^\frac12(y)}{\omega_n (r_0^n\widetilde M)^n}=   \frac{\norm{V}(B_{r_0((1+3\widetilde \nu)^2+\widetilde M})^\frac12(y)}{\omega_n \, r_0^n\,  r_0((1+3\widetilde \nu)^2+\widetilde M)^\frac{n}{2}}\, \left( 1+ \frac{(1+3\widetilde \nu)^2}{\widetilde M^2}\right)^\frac{n}{2}\, .
\end{equation}
Assumptions (3) and (5) allow us to use the monotonicity formula \autoref{lem: monotonicity formula}, and with \eqref{e: proof Lemma 8.4 1} we have 
\begin{equation}
\label{e: proof Lemma 8.4 2}
\frac{\|V\|(B_{ r_0((1+3\widetilde \nu)^2+{\widetilde M}^2)^{\frac12}}(y))}{\omega_n (r_0{\widetilde M})^n}
\leq (\exp \gamma)\left(1+\frac{(1+3\nu)^2}{{\widetilde M}^2}\right)^\frac{n}{2} \frac{\|V\|(B_R(y))}{\omega_n R^n}\, .
\end{equation}
\eqref{e: proof Lemma 8.4 0} and \eqref{e: proof Lemma 8.4 2} imply that 
\begin{equation}
\label{e: lemma 8.4 estimate small diamater}
\begin{split}
    &\frac{\norm{V} \left( \set{x\, \colon \, \mathrm{dist}(T^\perp(x), Y_1)\leq r_0, \ \abs{T(x)}\leq \widetilde M r_0}\right)}{\omega_n (\widetilde M \, r_0)^n}\\
    &\qquad \leq (\exp \gamma)\left(1+\frac{(1+3\widetilde \nu)^2}{{\widetilde M}^2}\right)^\frac{n}{2} \frac{\norm{V}(\set{x\, \colon \, \mathrm{dist}(x,Y)\leq R })}{\omega_n R^n}\, 
    \end{split}
\end{equation}
which is the thesis by taking sufficiently small $\gamma$ and large $\widetilde M$ depending only on $n,\nu$ and $\lambda$.
\vspace{0.7cm}

\noindent\textbf{$Y$ has large diameter}: suppose $\mathrm{diam}\, Y> 3\,  \widetilde \nu \,r_0$. We can then apply \autoref{lemma4.21} with $\xi=\gamma$ and $M$ sufficiently large in a way that $\gamma \ge \frac{1}{M+1}$ (so that (4) of the present Lemma implies (3) of \autoref{lemma4.21}) to obtain $Y_0,Y_1,Y_2,Y_3$ and $V_1,V_2,V_3$ satisfying \eqref{e: lemma 6.1 geometry Yk}-\eqref{4-21-12}. As in the proof of \autoref{lemma4.21}, we denote $Y\setminus Y_0=\set{y_1,\ldots,y_\beta,y_{\beta+1},\ldots,y_\alpha}$ and we define $\bar \l$ as there, namely as the first index $\l$ such that there is a hole above $y_\l$ (see \eqref{e: lemma 6.1 l bar} for details). We start by focusing on the following case.
\vspace{0.5cm}

\noindent\textbf{$Y$ has large diameter and $\bar \l=\beta$}: in this case, $Y_3=\emptyset$ and $V_3=0$. We separate into multiple cases, depending on $Y_1$ and $Y_2$ having small diameter or not.
\begin{itemize}
    \item If $\mathrm{diam}\, Y_1=\mathrm{diam}\, Y_2\leq 3 \widetilde\nu \, r_0$, then $J=1$ and the desired partition is $Y_0, \, Y_1\cup Y_2$. We now show why. We repeat the argument proposed when $Y$ had small diameter; more precisely, $V_1$ by \eqref{4-21-9} satisfies, for $r\in (r_0,M\mathrm{diam}\, Y)$ and $y\in Y_1,$
    \begin{equation*}
        (M\,{\rm diam}\, Y)\|\delta V_1\|(B_r(y))\leq 2M(2\,\nu+2)(3(2\,\nu+1) M)^{n+1}(\exp \gamma)\gamma \|V_1\|(B_r(y))\, .
    \end{equation*}
     We want to apply the monotonicity formula to get a counterpart to \eqref{e: lemma 8.4 estimate small diamater}, with a factor different to $\exp \gamma$. To do so, we need to have
    \begin{equation*}
        r_0 \left((1+3\widetilde \nu)^2+\widetilde M^2\right)^\frac12 < M\mathrm{diam}\, Y\, .
    \end{equation*}
    Note that we can set $M$ greater than $\widetilde M$ by a factor depending only on $\nu$ so that
    \begin{equation*}
        r_0 \left((1+3\widetilde \nu)^2+\widetilde M^2\right)^\frac12\leq r_0(1+3\widetilde \nu + \widetilde M)<3M\, \widetilde \nu \, r_0< M\, \mathrm{diam}\, Y\, 
    \end{equation*}
    where we used the hypothesis of $Y$ having large diameter for the last inequality. We can then apply the monotonicity formula \eqref{e: lemma 6.1 monotonicity} to get
    \begin{equation}
    \label{e: proof lemma 8.4 3}
        \begin{split}
    &\frac{\norm{V_1} \left( \set{x\, \colon \, \mathrm{dist}(T^\perp(x), Y_1)\leq r_0, \ \abs{T(x)}\leq \widetilde M r_0}\right)}{\omega_n (\widetilde M \, r_0)^n}\\
    &\leq \left(\exp \left( 2M(2\,\nu+2)(3(2\,\nu+1) M)^{n+1}(\exp \gamma)\gamma\right) \right)\left(1+\frac{(1+3\widetilde\nu)^2}{{\widetilde M}^2}\right)^\frac{n}{2}\cdot  \\
    &\qquad \cdot \frac{\norm{V_1}(\set{x\, \colon \, \mathrm{dist}(x,Y_1)\leq M\, \mathrm{diam}\, Y })}{\omega_n (M\, \mathrm{diam}\, Y)^n}\\
    &=\vcentcolon c(\gamma, \widetilde M)\frac{\norm{V_1}(\set{x\, \colon \, \mathrm{dist}(x,Y_1)\leq M\, \mathrm{diam}\, Y })}{\omega_n \, (M\, \mathrm{diam}\, Y)^n}\, ,
    \end{split}
    \end{equation}
    together with an analogous estimate for $V_2$ and $Y_2$. Moreover, by \eqref{4-21-11}, we have 
    \begin{equation}
    \label{e: lemma 8.4 V leq V1}
    \begin{split}
        &\norm{V} \left( \set{x\, \colon \, \mathrm{dist}(T^\perp(x), Y_1)\leq r_0, \ \abs{T(x)}\leq \widetilde M r_0}\right)\\
        & \qquad\leq \norm{V_1} \left( \set{x\, \colon \, \mathrm{dist}(T^\perp(x), Y_1)\leq r_0, \ \abs{T(x)}\leq \widetilde M r_0}\right)
        \end{split}
    \end{equation}
    Hence, by \eqref{e: lemma 6.1 Y1 and Y2 are far},\eqref{e: lemma 8.4 V leq V1}, \eqref{e: proof lemma 8.4 3}, the fact that $c(\gamma, \widetilde M)>1$ and \eqref{4-21-12} we get 
    \begin{equation*}
        \begin{split}
& \sum_{y\in Y_0} \frac{\norm V (B_{r_0}(y))}{\omega_n \, r_0^n} + \frac{\norm{V}\left(\set{x: {\rm dist}\,\left(T^{\perp}(x),(Y_1\cup Y_2)\right)\leq 
r_0,\, \abs{T(x)}\leq {\widetilde M}r_0}\right)}{\omega_n ({\widetilde M}r_0)^n}  \\  
& \leq\sum_{y\in Y_0} \frac{\norm V (B_{r_0}(y))}{\omega_n \, r_0^n} +\sum_{k=1,2} \frac{\norm V\left(\set{x: {\rm dist}\,(T^{\perp}(x),Y_k)\leq 
r_0,\, \abs{T(x)}\leq {\widetilde M}r_0}\right)}{\omega_n ({\widetilde M}r_0)^n}\\
&\leq \sum_{y\in Y_0} \frac{\norm{V} (B_{r_0}(y))}{\omega_n \, r_0^n} +\sum_{k=1,2} \frac{\norm{V_k}\left(\set{x: {\rm dist}\,(T^{\perp}(x),Y_k)\leq 
r_0,\, \abs{T(x)}\leq {\widetilde M}r_0}\right)}{\omega_n ({\widetilde M}r_0)^n}\\
&\leq c(\gamma, \widetilde M) \left( \sum_{y\in Y_0} \frac{\norm{V} (B_{r_0}(y))}{\omega_n \, r_0^n} + \sum_{k=1,2}\frac{\norm{V_k}\left(\set{x\, \colon \, \mathrm{dist}(x,Y_k)\leq M\, \mathrm{diam}\, Y }\right)}{\omega_n \, (M\, \mathrm{diam}\, Y)^n}\right)\\
&\leq c(\gamma, \widetilde M)\set{\left(1+\frac1M\right)^n+\frac{2(\nu+1)}
{M}}  (\exp\gamma)\frac{\norm V(\set{x:{\rm dist}\, (x,Y)\leq R})}{\omega_n R^n} \, ,
        \end{split}
    \end{equation*} 
which is \eqref{4-21-18} upon taking sufficiently large $\widetilde M$ and $M$ and small $\gamma$ (recall that we assumed that $\gamma\ge \frac{1}{M+1})$ depending only on $n$ and $\nu$.
    \item Suppose instead that $\mathrm{diam}\, Y_1=\mathrm{diam}\, Y_2 > 3 \widetilde\nu \, r_0$. In principle, we could iterate \autoref{lemma4.21} on $Y_1\cup Y_2$, but this would not be helpful, as it would return as result the same partition, since $\mathrm{diam} \, (Y_1\cup Y_2)= \mathrm{diam}\, (Y\setminus Y_0)$. More precisely, we do not want to apply \autoref{lemma4.21} with $\nu$ being the cardinality of the given step partition, but with the $\nu$ fixed at the start. This also explains why in the statement of \autoref{lemma4.21} we require that $Y$ must have \emph{at most} some elements, instead of a precise quantity. 
    
    We wish to apply \autoref{lemma4.21 KT} instead to the pair $Y_1, \, V_1$ (and similarly to $Y_2, \, V_2$). $Y_1$ has at most $\nu+1$ elements because it is a subset of $Y$, so (2) of \autoref{lemma4.21 KT} holds. (5) and (6) hold true due to \eqref{4-21-9} and \eqref{4-21-10}, with $R=M\, \mathrm{diam}\, Y$ and  $\xi= 2M(2\,\nu+2) (3(2\,\nu+1)M)^{n+1} (\exp \gamma) \gamma$. (3) holds true using $M-1$ as new $M$\footnote{Although this may sound a bit confusing, we are just saying that new constant $M$ is the same constant $M$ of the previous iteration minus $1$}, since we have $((M-1)+1)\mathrm{diam}\, Y\leq M\, \mathrm{diam Y}$. Finally, (4) holds true since $\mathrm{diam}\, Y>3 \widetilde\nu \, r_0 > 3(\nu +1)\,  r_0$. Thus, there exist a partition of $Y_1$ into $Z_0,Z_1,Z_2$ and varifolds $W_1,W_2$ satisfying \eqref{4-21-7 KT}-\eqref{4-21-12 KT}. Moreover, by \autoref{remark: lemma 8.3 KT for symmetric sets}, $Y_2$ admits a partition $\sigma(Z_0),\, \sigma(Z_1) , \, \sigma(Z_2)$ together with varifolds $\sigma_\sharp W_1, \, \sigma_\sharp W_2$ satisfying similar properties. Note that $V_k+\sigma_\sharp V_k$ satisfies a counterpart to \eqref{e: lemma 6.1 Y1 and Y2 are far}, namely, taking $V_1$ for simplicity,
    \begin{equation*}
       \begin{split}
        &\norm{V_1+\sigma_\sharp V_1} \left( \set{x\in \R^{n+1} \, \colon \, \mathrm{dist}\left(T^\perp(x), (Z_k\cup \sigma(Z_k))\right)\leq r_0)} \right)\\
        & \qquad= \norm{V_1} \left(\set{x:{\rm dist}\, (x,Z_k)\leq r_0}\right)+ \norm{\sigma_\sharp V_1} \left(\set{x:{\rm dist}\, (x,Z_k)\leq r_0}\right)\, \\
        &\qquad =\norm{W_k} \left(\set{x:{\rm dist}\, (x,Z_k)\leq r_0}\right)+ \norm{\sigma_\sharp W_k} \left(\set{x:{\rm dist}\, (x,Z_k)\leq r_0}\right)\, ,
        \end{split}
    \end{equation*}
    for $k=1,2$. Indeed, in the notation of \autoref{lemma4.21}, $Y_1$ is built to be further than $3\,\rho>3\,r_0$ from $H_0$, yielding that $r_0$ neighborhoods of $Y_1$ cannot cross $H_0$. Similarly, whenever we iterate the procedure, we cut the given set corresponding to a point above which there is a hole of size $3r_0$.

    Now we can just iterate the procedure to each $Z_k$ for $k=1,2$: if they have a small diameter, we get counterparts to \eqref{e: proof lemma 8.4 3}-\eqref{e: lemma 8.4 V leq V1}, otherwise we apply again \autoref{lemma4.21}. Each time we iterate, we need to take larger $\widetilde M$ and smaller $\gamma$, but this iteration lasts for at most $\nu+1$ times, as each element of the new partition has (strictly) fewer elements than its predecessor.
\end{itemize}
\vspace{0.7cm}

\noindent\textbf{Y has large diameter and $\bar \l<\beta$}. Suppose that $\mathrm{diam}\, Y_k\leq 3\,  \widetilde \nu\, r_0$ for some $k\in \set{1,2,3}$ (note that $\mathrm{diam}\, Y_1=\mathrm{diam}\, Y_2$). Then, we can repeat the argument of \eqref{e: proof lemma 8.4 3}-\eqref{e: lemma 8.4 V leq V1}. If $\mathrm{diam}\, Y_1>3 \widetilde\nu \,r_0$, we  apply \autoref{lemma4.21 KT} to $Y_1, V_1$ and to $Y_2, V_2$ and we argue similarly to what we described before. If instead  $\mathrm{diam}\, Y_3>3 \widetilde\nu\, r_0,$ we use again \autoref{lemma4.21}; say that $Y_3=\set{y_1,y_\mu,y_{\mu+1},\ldots, y_{2\mu}}$ for some $\mu\in \Na$, $\mu\leq \nu$. If the $\bar \l$ associated to $Y_3$ is $\mu$, then we fall in the previous case and we repeat all the computations we described. Otherwise, we repeat the present step until we can. We will stop after at most $\nu+1$ iterations, since this procedure cuts some elements.

\end{proof}
\end{lemma}
The next Lemma is the symmetric version of \cite[Lemma $8.5$]{KimTonegawa}.
\begin{lemma}
\label{lemma 8.5}
    Corresponding to $n, \nu \in \Na$ and $\lambda\in (1,2)$, there exist $\gamma, \eta\in (0,1)$, $\widetilde M\in (1,\infty)$ and $j_0\in \Na$ with the following property. Suppose 
    \begin{enumerate}
\item $\E^S\in \opns$ is a symmetric open partition,
$j\in \Na$ with $j\geq j_0$;
\item $\eps\leq \gamma j^{-4}$;
\item $\eta j^{-2}<R$;
\item $Y\subset H_0^{\perp}$ is symmetric, it has no more than $2\, \nu$ elements, $Y\cap H_0=\emptyset$ and $\theta^{n}
\left(\normpiccolo{\partial\E^S},y\right)=1$ for each $y\in Y$;
\item ${\rm diam}\, Y\leq \gamma R$; 
\item $\displaystyle R\normpiccolo{\delta(\Phi_{\eps}\ast \partial\E^S)}(B_r(y))\leq \gamma \normpiccolo{\Phi_{\eps}\ast \partial \mathcal E^S}(B_r(y))
$ for all $y\in Y$ and $r\in (\eta^2 j^{-2},R)$, 
\item $ \displaystyle \int_{{\bf G}_{n}(B_r(y))} \norm{S-T}\, d(\Phi_{\eps}\ast \partial \mathcal E^S)(x,S)\leq \gamma \normpiccolo{\Phi_{\eps}
\ast \partial \mathcal E^S}(B_r(y))$ for all $y\in Y$ and $r\in (\eta^2 j^{-2},R)$; 
\end{enumerate}
Moreover, we introduce
\begin{enumerate}
\item[(a)] $\widetilde R_1\define\eta^2 j^{-2}\lambda^{-\frac{1}{4n}}$;
\item[(b)] $\widetilde R_2\define\tilde M \eta^2 j^{-2}\lambda^{-\frac{1}{4n}}$;
\item[(c)] $\rho\define\frac12\eta^2j^{-2}(1-\lambda^{-\frac{1}{4n}})$;
\end{enumerate}
and, for any symmetric subset $Y'\subset Y$,
\begin{enumerate}
\item[(d)] $E^\ast_1(r,Y')\define\set{x\in\R^{n+1} \, \colon\,  \abs{T(x)}\leq r, {\rm dist}(Y',T^{\perp}(x))
\leq (1+\widetilde R_1^{-1}r)\rho}$,
\item [(e)]$E^\ast_2(r,Y')\define\set{x\in\R^{n+1} \, \colon \,  \abs{T(x)}\leq r, {\rm dist}(Y',T^{\perp}(x))
\leq (1+\widetilde R_2^{-1}r)\rho}$.
\end{enumerate}
Further suppose that for all symmetric $Y'\subset Y$ with either ${\rm diam}\,Y'<j^{-2}$ or $\mathrm{diam}\, Y'_+=\mathrm{diam}\, Y'_-<j^{-2}$, $i=1,2$
and $r\in (0,j^{-2})$ that
\begin{enumerate}
\item [(8)] $\displaystyle \int_{{\bf G}_{n}(E^\ast_i(r,Y'))}
\norm{S-T}\, d(\partial \E^S)(x,S)\leq \gamma \|\partial\mathcal E^S\|(E^\ast_i(r,Y'))\, ,$
\item [(9)] $\Delta_j \normpiccolo{\partial \E^S}(E^\ast_i(r,Y'))
\geq -\gamma \normpiccolo{\partial\E^S}(E^\ast_i(r,Y')) \, .$
\end{enumerate}
Then we have
\begin{equation}
\lambda \normpiccolo{\Phi_{\eps}\ast \partial \E^S}\left(\set{x \, \colon \,  {\rm dist}\,(x,Y)\leq R}\right)\geq 
\omega_{n} R^{n}\Ha^0(Y)\, .
\label{afitg7}
\end{equation}
\begin{proof}
    The proof is analogous to \cite[Lemma $8.5$]{KimTonegawa}; indeed, after fixing some parameters, we apply \autoref{lemma 8.4} to the varifold $\Phi_\eps \ast \partial \E^S$, whose symmetry is guaranteed by the linearity of the convolution and by the symmetry of $\E^S$. We then get a partition $Y_0,\ldots, Y_J$ and we apply \autoref{lemma 8.1} to $\partial \E^S$ with each element of $Y_0$ and to $\partial \E^S$ with every other set $Y_k$, which are sets whose restriction to both halfspaces have small diameter and then eligible for the Lemma.
\end{proof}
\end{lemma}
The next Lemma is used to treat sets $Y$ which are not symmetric, but are very far from $H_0$.
\begin{lemma}
\label{lemma 8.5 Y far from H0}
Corresponding to $n,\nu\in \Na$ and $\lambda\in (1,2),$ there exist $\widehat \gamma, \widehat \eta\in (0,1)$ and $\widehat M\in (1,\infty)$ with the following property. Suppose (1)-(3), (5)-(7) of \autoref{lemma 8.5} and 
\begin{enumerate}
\item[(4')]: $Y\subset H_0^\perp$ has no more than $\nu$ elements, $\theta^n\left(\normpiccolo{\partial \E^S}, y\right)=1$ for each $y\in Y$ and $\mathrm{dist}(Y,H_0) > j^{-2}$.
\end{enumerate}
Furthermore, assume also (8)a and (9), but assume no symmetry on $Y'$ and that just $\mathrm{diam}\, Y'<j^{-2}$. Then we have
\begin{equation*}
    \lambda \normpiccolo{\Phi_\eps \ast \partial \E^S}( \set{x\, \colon \, \mathrm{dist}(x,Y)\leq R}) \ge \omega_n \, R^n \, \Ha^0(Y)\, .
\end{equation*}
\begin{proof}
    It is proved following \textit{verbatim} \cite[Lemma $8.5$]{KimTonegawa}. At the end, the authors use \cite[Lemma $8.1$]{KimTonegawa}, which is the non-symmetric version of our \autoref{lemma 8.1}, with $\rho=\frac12 \eta^2 j^{-2}(1-\lambda^{-\frac{1}{4n}})<\frac12 j^{-2}$. However, as we observed in \autoref{remark: lemma 8.1 Y far from H0}, if $Y$ is not symmetric but far enough from $H_0$, we can still prove the thesis of \autoref{lemma 8.1}.
\end{proof}
\end{lemma}
\begin{theorem}
\label{theorem: integrality}
    Suppose that $\{\mathcal E_j^S\}_{j=1}^{\infty}\subset 
\opn$ and $\{\eps_j\}_{j=1}^{\infty}\subset (0,1)$ satisfy
\begin{itemize}
\item[(1)] $\lim_{j\rightarrow\infty} j^4 \eps_j=0$,
\item[(2)] $\sup_j \normpiccolo{\partial\mathcal E_j^S}(\R^{n+1})<\infty$,
\item[(3)] $\displaystyle \liminf_{j\rightarrow\infty} \bigint{\R^{n+1}} \frac{\big| \Phi_{\eps_{j}} \ast \delta (\partial \E_{j}^S(t))\big|^2}{\Phi_{\eps_j} \ast \normpiccolo{\partial \E_{j}^S(t)} + \eps_{j}} \, dx <\infty$,
\item[(4)] $\lim_{j\rightarrow\infty} j^{2(n+1)}\Deltajvc\normpiccolo{\partial\E_j^S}(\R^{n+1})=0$.
\end{itemize}
Then there exists a converging subsequence $\{\partial\mathcal E^S_{j_\l}\}_{\l=1}^{\infty}$ whose limit satisfies $V^S\in {\bf IV}_{n}(\R^{n+1})$. Moreover, the density is even valued for $\normpiccolo{V^S}\mbox{-a.e.} \  x\in H_0$. 
\begin{proof}

As mentioned in the introduction to this section,for any given $x\in \R^{n+1}\setminus H_0$, we can just take $j$ sufficiently large such that $B_{\sfrac{1}{j^2}}(x)\cap H_0=\emptyset$ and we work there following the original proof.

Let us focus on the integrality on $H_0$. We basically follow again the proof \cite[Theorem $8.6$]{KimTonegawa}, making use of our Lemmas for symmetric sets rather than theirs. We briefly describe their proof, focusing on the required modifications.

We fix a sequence $\set{j_\l}$ along which the quantities in (2) and (3) are uniformly bounded by some constant $M$ and such that $\set{\partial \E^S_{j_\l}}_{\l=1}^\infty$ converges to a symmetric varifold $V^S\in \RV_n(\R^{n+1})$, whose existence in guaranteed by \autoref{theorem: rectifiability}. After some approximations, we fix $x\in H_0$ and we write
    \begin{equation*}
        d\define \theta^n\left(\normpiccolo{V^S}, x\right), \ T\define \mathrm{Tan}^n\left(\normpiccolo{V^S},x\right)\, .
    \end{equation*}
    Due to the locality of the tangent plane (see \cite[Proposition $10.5$]{MaggiSets}), we have that for $\Ha^n \mbox{-a.e.} \ x\in H_0$, $T=H_0$. After a translation, we can also assume $x=0$. Set $r_\l=\l^{-1}$ and choose another subsequence so that
    \begin{equation}
\label{xitg21}
\lim\limits_{\l\rightarrow\infty} (f_{(r_\l)})_{\sharp} \partial \E^S_{j_\l}
=\lim\limits_{\l\rightarrow\infty} (f_{(r_\l)})_{\sharp} (\Phi_{\eps_{j_l}}\ast\partial \E_{j_\l})
=d\abs{H_0}\, ,
\end{equation}
and 
    \begin{equation}
    \label{e: proof thm 8.6 ratio jl rl}
        \lim\limits_{\l \to \infty} \frac{{j_\l}^{-1}}{r_\l}=0\, ,
    \end{equation}
    where $(f_{(r)})(y)\define r^{-1} \, y$.
    
    Suppose $2\,\nu$ is the smallest positive even number greater than $d$, that is
    \begin{equation}
    \label{e: proof thm 8.6 0}
        \nu \in \Na \ \mbox{and} \ 2\,\nu\in (d, d+2]\, .
    \end{equation}
    Choose $\lambda\in (1,2)$ such that 
    \begin{equation}
    \label{e: proof thm 8.6 ratio d nu}
        \lambda^{n+1} d < 2\,\nu\, .
    \end{equation}
Corresponding to $n, \, \nu$ and $\lambda$, there are $\gamma, \eta\in (0,1)$ and $\widetilde M\in (1,\infty)$ such that \autoref{lemma 8.5} holds. For each $\l$, we define the set $G_\l^{\ast}$ as the points $x$ in $B_{(\lambda-1)r_\l}$ which satisfy both conditions (6) and (7) of \autoref{lemma 8.5}, taken with respect to $j=j_\l$ and $R=r_\l$, and such that $\partial \E^S_{j_\l}$ is unit density at $x$. We can also further redefine $G_\l^\ast$ as $G_\l^\ast \setminus H_0$, since for any $\l$ we have $\normpiccolo{\partial \E^S_{j_\l}}(H_0)=0$ by \eqref{e: no mass on the boundary along iteration}.
    
We next define, as in \autoref{lemma 8.5} (a)-(c), 
\begin{equation*}
\widetilde R_{1,l}\define\eta^2 j_\l^{-2}\lambda^{-\frac{1}{4n}}\, , \ 
\widetilde R_{2,l}\define\tilde M\eta^2 j_\l^{-2}\lambda^{-\frac{1}{4n}}\, , \ 
\rho_\l\define\frac12 \eta^2 j_\l^{-2}(1-\lambda^{-\frac{1}{4n}})\, .
\end{equation*}
For each $x\in G_\l^\ast$, take an arbitrary symmetric finite set $Y'=\set{y_1,\ldots, y_{2m}}\subset G_\l^\ast$, where $m\in \Na$, with $y_1=x$, $T(x-y_i)=0$ for $i\in\set{2,\ldots,2m}$ and $\mathrm{diam}\, (Y')_+=\mathrm{diam}\, (Y')_-<j_\l^{-2}$. Define
\begin{equation*}
E^\ast_{i,\l}(r,Y')\define\{ z\in \mathbb R^{n+1} : \abs{T(z-x)}\leq r,\, {\rm dist}\,(T^{\perp}(Y')
,T^{\perp}(z))\leq  (1+\widetilde R_{i,\l}^{-1}
r)\rho_\l\}
\end{equation*}
for $i=1,2$. We define $G_\l^{\ast \ast}$ as the set of points $x\in G_\l^\ast$ such that, for arbitrary such $Y'$ described above, conditions (8) and (9) of \autoref{lemma 8.5} hold true, namely for all $r\in (0,j_\l^{-2})$ and $i=1,2$, we have 
\begin{equation*}
\begin{split}
& \int_{{\bf G}_{n}(E^\ast_{i,\l}(r,Y'))}
\|S-T\|\, d(\partial\mathcal E_{j_\l})\leq \gamma\|\partial\mathcal E_{j_\l}\|(E^\ast_{i,\l}(r,Y'))
\mbox{ and } \\ & \Delta_{j_l}\|\partial\mathcal E_{j_\l}\|(E^\ast_{i,\l}(r,Y'))\geq -\gamma 
\|\partial\mathcal E_{j_\l}\|(E^\ast_{i,\l}(r,Y')).
\end{split}
\end{equation*}
The same computations used by \cite{KimTonegawa} yields to (see \cite[Equation $(8.154)$]{KimTonegawa})
\begin{equation}
\label{e: proof thm 8.6 8.154 KT}
    \lim\limits_{\l \to \infty}r_\l^{-n} \normpiccolo{\partial \E_{j_\l}} (B_{(\lambda-1)r_\l}\setminus G_\l^{\ast \ast})=0\, .
\end{equation}
Given $s\in (0,\frac14)$ and $x\in G_\l^{\ast \ast}$ such that $\abs{\d} (x)\leq \frac{\gamma \, r_\l \, s}{2}$, we want to use \autoref{lemma 8.5} with $R=r_\l\, s$ for $Y=\set{T^\perp(x), \, \sigma(T^\perp(x))}$. The case of $x\in G_\l^{\ast \ast}$ and far from $H_0$ will be treated afterwards. We can see that $Y$ is constructed in such a way to satisfy all the assumptions of \autoref{lemma 8.5} (in particular, note that we assumed the bound on $\abs \d(x)$ to make sure that $Y$ has the correct diameter to satisfy assumption (5)). Thus, by \eqref{afitg7} we have 
\begin{equation}
\label{e: proof thm 8.6 1}
    \lambda \normpiccolo{\Phi_\eps \ast \delta \E^S_{j_\l}}( B^S_{r_\l s}(x) )\ge 2\, \omega_n(r_\l \, s)^n\, .
\end{equation}
We bound the left-hand-side by 
\begin{equation}
\label{e: proof thm 8.6 2}
\begin{split}
    \lambda \normpiccolo{\Phi_\eps \ast \delta \E^S_{j_\l}}( B^S_{r_\l s}(x))&\leq \lambda\left( \normpiccolo{\Phi_\eps \ast \delta \E^S_{j_\l}}( B_{r_\l s}(x)) + \normpiccolo{\Phi_\eps \ast \delta \E^S_{j_\l}}( B_{r_\l s}(\sigma(x)))\right)\\
    &=2\lambda  \normpiccolo{\Phi_\eps \ast \delta \E^S_{j_\l}}( B_{r_\l s}(x))\, ,
    \end{split}
\end{equation}
where we used the symmetry. By \eqref{e: proof thm 8.6 1}-\eqref{e: proof thm 8.6 2}, we have 
\begin{equation}
\label{e: proof thm 8.6 3}
    \lambda  \normpiccolo{\Phi_\eps \ast \delta \E^S_{j_\l}}( B_{r_\l s}(x))\ge \omega_n (r_\l \, s)^n\, ,
\end{equation}
for $\abs \d (x)\leq \frac{\gamma \, r_\l \, s}{2}$. Suppose instead that $x\in G_\l^{\ast \ast}$ and $\abs \d (x) > j_\l^{-2}$. We then use \autoref{lemma 8.5 Y far from H0} with $R=r_\l \, s$ and $Y=\set{ T^\perp (x)}$ to obtain 
\begin{equation*}
\label{e: proof thm 8.6 4}
      \lambda  \normpiccolo{\Phi_\eps \ast \delta \E^S_{j_\l}}( B_{r_\l s}(x))\ge \omega_n (r_\l \, s)^n\, .
\end{equation*}
Note that
\begin{equation}
\label{e: proof thm 8.6 5}
    j_\l^{-2}\leq \frac{\gamma\, r_\l\, s}{2}\, ,
\end{equation}
since it is equivalent to $s\, \gamma \ge 2 \frac{j_\l^{-1}}{r_\l} \, j_\l^{-1}$, whose right-hand-side goes to $0$ by \eqref{e: proof thm 8.6 ratio jl rl}. Hence, by \eqref{e: proof thm 8.6 3}, \eqref{e: proof thm 8.6 4} and \eqref{e: proof thm 8.6 5}, we have 
\begin{equation}
\label{e: proof thm 8.6 6}
     \lambda  \normpiccolo{\Phi_\eps \ast \delta \E^S_{j_\l}}( B_{r_\l s}(x))\ge \omega_n (r_\l \, s)^n \ \mbox{for every} \ x\in G_\l^{\ast \ast}\, .
\end{equation}
Arguing as in \cite[Equation $(8.156)$]{KimTonegawa}, we can see that \eqref{e: proof thm 8.6 6} implies 
\begin{equation}
\label{e: proof thm 8.6 7}
    G_\l^{\ast \ast} \subset B_{(\lambda-1)r_\l}\cap \set{x\, \colon \, \abs{\d}(x) \leq 3r_\l \, s}\, .
\end{equation}
We next show that, for all sufficiently large $j_\l$,
\begin{equation}
\label{e: proof thm 8.6 8}
    \Ha^0\left( \set{x\in G_\l^{\ast \ast} \, \colon \, T(x)=a}\right) \leq 2\,\nu-2
\end{equation}
for all $a\in B_{(\lambda-1)r_\l}\cap H_0$. For the sake of a contradiction, suppose there is a sequence of points $a_\l\in B_{(\lambda-1)r_\l}\cap H_0$ such that \eqref{e: proof thm 8.6 8} fails. Thus, there exist a family of sets $Y_\l$ with $\Ha^0(Y_\l)=2\,\nu-1$ such that for each for every $x\in Y_\l$, $T(x)=a_\l$. However, there is also such $Y_\l$ which is symmetric and with $\Ha^0(Y_\l)=2\,\nu$, due to the symmetric definition of $G_\l^{\ast \ast}$ and to the fact that (by definition) $G_\l^{\ast \ast}\cap H_0=\emptyset$. We want to apply \autoref{lemma 8.5} to $Y_\l$ and $R=r_\l$. The diameter requirement (assumption (5) of \autoref{lemma 8.5}) holds true due to \eqref{e: proof thm 8.6 7} by taking $s=\sfrac{\gamma}{6}$. One can check that the other assumptions are satisfied too. Thus, we have
\begin{equation}
\label{e: proof thm 8.6 9}
    \lambda \normpiccolo{\Phi_{\eps_{j_\l}} \ast \partial \E^S_{j_\l}} \left( \set{x\, \colon \, \mathrm{dist}(x, Y_\l)\leq r_\l}\right) \ge \omega_n r_\l^n \,2 \nu\, .
\end{equation}
We may assume after choosing a subsequence
that $r_\l^{-1}a_\l$ converges to $\bar a\in B_{\lambda-1}\cap H_0$. By \eqref{xitg21}, we have
\begin{equation}
\label{e: proof thm 8.6 10}
\begin{split}
    \lambda^n \, \omega_n \, d&=\lim\limits_{\l \to \infty} \normpiccolo{(f_{(r_\l)})_\sharp (\Phi_{\eps_{j_\l}} \ast \partial \E^S_{j_\l})} (B_\lambda(\bar a))\\
    &= \lim\limits_{\l \to \infty} r_\l^{-n} \normpiccolo{\Phi_{\eps_{j_\l}} \ast \partial \E^S_{j_\l} } (B_{\lambda r_\l } (r_\l \, \bar a))\, .
    \end{split}
\end{equation}
If we take \eqref{e: proof thm 8.6 7} with $s=\frac{\sqrt{\lambda}-1}{6}$, we have 
\begin{equation*}
    Y_\l \subset B_{(\lambda-1)r_\l} \cap \set{\abs \d(x) \leq \frac{r_\l (\sqrt{\lambda}-1)}{2}}
\end{equation*}
and $T(y_\l)=a_\l$ for each $y_\l \in Y_\l$, yielding 
\begin{equation*}
    \abs{x - a_\l}\leq \frac{r_\l (\sqrt{\lambda}-1)}{2}+ r_\l \leq \sqrt{\lambda}\, r_\l 
\end{equation*}
for each $x\in \set{z\, \colon \, \mathrm{dist}(z, Y_\l)\leq r_\l}$. Since $\abs{r_\l^{-1}a_\l -\bar a}$ converges to $0$, so does $\abs{a_\l- r_\l \bar a}$. Thus, if $x\in B_{\sqrt{\lambda} r_\l}(a_\l)$, then
\begin{equation*}
    \abs{x-r_\l \bar a}\leq \abs{x- a_\l} + \abs{a_\l -r_\l \bar a}\leq \lambda\,  r_\l \, ,
\end{equation*}
for sufficiently large $\l$. Thus,
\begin{equation}
\label{e: proof thm 8.6 11}
    \set{x\, \colon \, \mathrm{dist}(x,Y_\l) \leq r_\l}\subset B_{\lambda r_\l}(r_\l \, \bar a)\, .
\end{equation}
\eqref{e: proof thm 8.6 9}, \eqref{e: proof thm 8.6 10} and \eqref{e: proof thm 8.6 11} show $\lambda^{n+1}\, d\ge 2\,\nu$, contradicting \eqref{e: proof thm 8.6 ratio d nu} and yielding \eqref{e: proof thm 8.6 8}. 

Finally, we note that
\begin{equation}
\label{gs15}
\lim\limits_{\l\rightarrow\infty}r_\l^{-n}\normpiccolo{T_{\sharp}\partial\E^S_{j_\l}}(B_{(\lambda-1)r_\l}\setminus G_\l^{\ast \ast})
\leq \lim_{l\rightarrow\infty} r^{-n}_\l\normpiccolo{\partial\E^S_{j_\l}}(B_{(\lambda-1)r_\l}\setminus G_\l^{\ast \ast})=0
\end{equation}
due to \eqref{e: proof thm 8.6 8.154 KT}, while 
\begin{equation}
\label{gs16}
\begin{split}
    \normpiccolo{T_\sharp \partial \E_{j_\l}^S} (G_\l^{\ast \ast})&=\int_{B_{(\lambda-1)r_\l}\cap H_0 } \sum_{\{x\in G_\l^{\ast \ast} \, \colon \, T(x)=a\}} \theta^n(\normpiccolo{\partial \E_{j_\l}^S}, x)\, d\Ha^n(a)\\
    &\leq \omega_n ( (\lambda-1)r_\l)^n (2\,\nu-2) 
    \end{split}
\end{equation}
by \eqref{e: proof thm 8.6 8} for all large $j_\l$. By \eqref{xitg21}, 
\begin{equation}
\begin{split}
\lim\limits_{\l\rightarrow\infty} r_\l^{-n}\normpiccolo{T_{\sharp}\partial\mathcal E^S_{j_\l}}(B_{(\lambda
-1)r_\l})&=\normpiccolo{T_{\sharp}\left(d\abs{H_0}\right)}(B_{\lambda-1}) \\
&=\omega_{n}(\lambda-1)^{n}d
\end{split}
\label{gs17}
\end{equation}
and \eqref{gs15}, \eqref{gs16} and \eqref{gs17} show $d\leq 2\,\nu-2$. By \eqref{e: proof thm 8.6 0}, this proves $d=2\,\nu-2$.
\end{proof}
\end{theorem}
\section{Properties of symmetric varifolds and existence of a free boundary Brakke flow}
\label{section: symmetric Brakke flow}
In this section we show how to relate the properties of a  varifold to the properties of its symmetrization. We start by showing how to compare their variations.
\begin{lemma}
\label{lem: variation of reflection}
    Let $V\in \V_n\left(\closedhalf\right)$ and $V^S =V+\sigma_\sharp V\in \V_n(\R^{n+1})$ its symmetrization. Let $g\in C^1_c(\R^{n+1}, \R^{n+1})$ be a vector field and let $g^S$ be its reflection (see \autoref{def: symmetryc objects} (b)). Then,
\begin{equation}
\label{e: variation of symm varifold}
    \delta V^S(g) =\delta V(g) + \delta V(g^S)\, .
\end{equation}
    In particular, if $g=g^S$, that is $\sigma(g(x))=g(\sigma(x))$, we have 
    \begin{equation}
    \label{e: variation of symm varifold with symm vector field}
        \delta V^S(g) = 2 \, \delta V(g)\, .
    \end{equation}
    \begin{proof}
        By linearity we have
        \begin{equation}
        \label{e: computation variation of symm varifold}
        \delta V^S (g)= \delta V(g) + \delta (\sigma_\sharp V) (g)
        \end{equation}
        and 
        \begin{equation}
        \label{e: variation reflection}
            \begin{split}
                \delta (\sigma_\sharp V) (g)
        = \int_{\bG_n(\R^{n+1})} S' \cdot \nabla g(x) \, d(\sigma_\sharp V)(x,S')        = \int_{\bG_n(\R^{n+1})} (Q\,S\, Q )\cdot \nabla g(\sigma(x)) \, dV(x,S)\, .
            \end{split}
        \end{equation}
Since $g^S(x)=\sigma(g(\sigma(x)))$, we have 
\begin{equation}
 \label{e: computation variation of symm of vector field}
    \begin{split}
        \delta V(g^S) &=\int_{\bG_n(\R^{n+1})} S \cdot\left( Q  \, \nabla g(\sigma(x)) \, Q\right) dV(x,S)\\
        &= \int_{\bG_n(\R^{n+1})} \mbox{trace} \left( S \, Q \, \nabla g(\sigma(x)) \, Q\right)\, dV(x,S)\\
        &=\int_{\bG_n(\R^{n+1})} \mbox{trace}  \left( Q \,S \, Q \, \nabla g(\sigma(x)) \right)  dV(x,S)\\
        &= \delta (\sigma_\sharp V)(g)\, ,
            \end{split}
        \end{equation}
        where we used the invariance of the trace under permutations. \eqref{e: computation variation of symm varifold}, \eqref{e: variation reflection} and \eqref{e: computation variation of symm of vector field} show \eqref{e: variation of symm varifold}. \eqref{e: variation of symm varifold with symm vector field} follows immediately.
       
    \end{proof}
\end{lemma}
We show that a symmetric varifold has symmetric approximate mean curvature, whenever it exists.
\begin{lemma}
        Let $V\in \V_n\left(\closedhalf\right)$ and $V^S =V+\sigma_\sharp V\in \V_n(\R^{n+1})$ its symmetrization. If $V^S$ admits generalized mean curvature $\h=\h(\cdot,V^S)$, then $\h$ is symmetric, namely for every $x\in \R^{n+1}$
    \begin{equation}
    \label{e: symmetry mean curvature of a symmetric varifold}
        \sigma (\h(x))=\h(\sigma(x))\,  .
    \end{equation}
    \begin{proof}
    Let $g\in C^1_c(\R^{n+1}, \R^{n+1})$ and let $g^S(\cdot)= \sigma(X(\sigma(\cdot)))$ be its reflection. Note that 
    \begin{equation*}
        (g^S)^S(x)= \sigma (g^S(\sigma(x)))= \sigma(\sigma(g(\sigma(\sigma(x)))))=g(x)\,;
    \end{equation*}
    Together with \eqref{e: variation of symm varifold}, this implies that 
    \begin{equation}
    \label{e: proof lemma symmetry mean curvature 1}
        \delta V^S(X)= \delta V^S(X^S)\, .
    \end{equation}
    If we write both sides of \eqref{e: proof lemma symmetry mean curvature 1} in terms of the generalized mean curvature we get
    \begin{equation}
    \label{e: proof symmetry mean curvature 1}
        \int_{\R^{n+1}} X(x)\cdot \h(x)\, d\normpiccolo{V^S}(x) = \int_{\R^{n+1}} \sigma(X(\sigma(x))) \cdot \h(x)\, d\normpiccolo{V^S}(x)\, .
    \end{equation}
    Taking the change of variable $x=\sigma(y)$ and using the orthogonality of the reflection, we get 
    \begin{equation}
    \label{e: proof symmetry mean curvature 2}
    \begin{split}
        \int_{\R^{n+1}} \sigma(X(\sigma(x))) \cdot \h(x) \, d\normpiccolo{V^S}(x)& = \int_{\R^{n+1}} \sigma(X(x)) \cdot \h(\sigma (x)) \, d\normpiccolo{V^S}(x)\\
        &=\int_{\R^{n+1}} X(x) \cdot \sigma( \h (\sigma (x))) \, d\normpiccolo{V^S}(x)\, .
        \end{split}
     \end{equation}
     \eqref{e: proof symmetry mean curvature 1} and \eqref{e: proof symmetry mean curvature 2} give 
     \begin{equation*}
         \int_{\R^{n+1}} X(x) \cdot \left( \h(x)-\sigma(\h(\sigma(x))) \right) \, d\normpiccolo{V^S}(x)=0\, ,
     \end{equation*}
     yielding \eqref{e: symmetry mean curvature of a symmetric varifold}.
     \end{proof}
\end{lemma}

\begin{lemma}
\label{lem: restriction of a symmetric varifold is free boundary}
      Let $V\in \V_n\left(\closedhalf\right)$ and $V^S =V+\sigma_\sharp V\in \V_n(\R^{n+1})$ its symmetrization. If $V^S$ admits generalized mean curvature $\h$, then $V$ is a varifold with free boundary on $H_0$, having $\h^\fb(x,V)= \h(x,V^S)$ for $x\in \closedhalf$.
      \begin{proof}
          Let $g\in C^1_c(\R^{n+1}; \R^{n+1}) \cap\T{H_0}$; since $\spt (V) \subseteq \clos{ \R^{n+1}_+}$, $\delta V(g)$ is independent of the values of $g$ in $\R^{n+1}_-$ and we may therefore assume that $g$ satisfies $\sigma(g(\sigma(x)))=g(x)$. 
          Thus, by \eqref{e: variation of symm varifold with symm vector field} and \eqref{e: symmetry mean curvature of a symmetric varifold},
          \begin{equation*}
          \begin{split}
              \delta V(g)&= \frac12 \, \delta V^S(g) = -\frac12 \int_{\R^{n+1}} g\cdot \h \, d\normpiccolo{V^S}\\
              &=-\frac12\left( \int_{\R^{n+1}} g(x) \cdot \h(x,V^S)+ g(\sigma(x)) \cdot \h(\sigma(x))\right)\, d\normpiccolo{V}(x)\\
              &=-\frac12\left( \int_{\R^{n+1}} g(x) \cdot \h(x,V^S)+ \sigma(g(\sigma(x))) \cdot \sigma(\h(\sigma(x)))\right)\, d\normpiccolo{V}(x)\\
              &=-\frac12\left( \int_{\R^{n+1}} g(x) \cdot \h(x)+ g(x) \cdot \h(x)\right)\, d\normpiccolo{V}(x)\\
              &=-\int_{\R^{n+1}} g(x)\cdot \h(x) \, d\normpiccolo{V}(x)\, ,
              \end{split}
          \end{equation*}
          that is the thesis.
      \end{proof}
\end{lemma}
\begin{lemma}
\label{lemma: rectifiability of restriction}
        Let $V\in \V_n\left(\closedhalf\right)$ and $V^S =V+\sigma_\sharp V\in \V_n(\R^{n+1})$ its symmetrization. If $V^S\in \RV(\R^{n+1})$, then $V\in \RV(\R^{n+1})$. More precisely, if there exist a countably $n$-rectifiable set $\Gamma$ and a function $\theta\in L^1_{\operatorname{loc}}(\Ha^n\mres\Gamma)$ such that for every $\varphi \in C_c(\bG_n(\R^{n+1}))$
            \begin{equation*}
                V^S(\varphi)= \int_{\Gamma} \varphi(x, T_x\Gamma) \, \theta(x)\, d\Ha^n(x)\, , 
                \end{equation*}
        then there exists a function $g$ such that $g\equiv 1$ on $\R^{n+1}_+\cap \Gamma$, $g\equiv \frac12$ on $H_0\cap \Gamma$ and 
        \begin{equation*}
            V(\varphi) =\int_{\Gamma \cap \closedhalf} \varphi(x, T_x\Gamma) \, g(x) \, \theta(x) \, d\Ha^n(x)\, .
        \end{equation*}
    In particular, if $V^S\in \IV(\R^{n+1})$ with $\theta(x)\in 2\, \Na$ for $\normpiccolo{V^S}-\mbox{a.e.}\  x\in H_0$, then $V\in \IV(\closedhalf)$.
        \begin{proof}

    Since $V\ll V^S$, by Radòn-Nykodym there exists a function $\widetilde g$ such that 
            \begin{equation*}
                V(\varphi)= \int_{\bG_n(\R^{n+1})} \varphi(x,S')\, g(x,S')\, dV^S(x,S')= \int_{\Gamma} \varphi(x,T_x\Gamma) \, \widetilde g(x,T_x\Gamma)\, \theta(x)\, d\Ha^n(x)\, .
            \end{equation*}
Define $g(x)\define \widetilde g(x, T_x\Gamma)$; since $\spt \norm V\subseteq \closedhalf$, if $\varphi\in C_c(\bG_n(\R^{n+1}_+))$, it follows that 
    \begin{equation*}
        V(\varphi)= V^S(\varphi)=\int_{\Gamma} \varphi(x, T_x\Gamma) \, \theta(x)\, d\Ha^n(x)\, ,
    \end{equation*}
yielding $g\equiv 1$ on $\R^{n+1}_+$. It is clear that $g\equiv 0$ on $\R^{n+1}_-.$ Moreover, by definition of pushforward of a rectifiable varifold, 
           \begin{equation*}
               \sigma_\sharp V = \var\left( \sigma \left( \Gamma \cap \closedhalf\right), (g \, \theta)(\sigma^{-1})\right)\,.
           \end{equation*}
Hence, for any $x\in H_0\cap \Gamma$ the multiplicity of $\sigma_\sharp V$ at $x$ is $g(x)\theta(x)$. Thus, the multiplicity of $V^S=V+\sigma_\sharp V$ at $x\in H_0\cap \Gamma$ is $2\, g(x)\theta(x)= \theta(x)$, yielding that $g\equiv \frac12$ on $H_0\cap \Gamma$. 
        \end{proof}
\end{lemma}
\begin{lemma}
\label{lemma: FB brakke flow and symmetric brakke flow}
     Let $\set{V_t}_{t\ge 0}$ be a family of varifolds and let $\setpiccolo{V^S_t}_{t\ge 0}=\set{V_t+\sigma_\sharp (V_t)}_{t\ge 0}$ be its symmetrization. If for $\mbox{a.e.}\  t$, $V^S_t$ has generalized mean curvature $\h(\cdot)=\h(\cdot, V^S)$ and $\setpiccolo{V_t^S}_{t\ge 0}$ satisfies the standard Brakke's inequality, that is for all $0\leq t_1<t_2 <\infty$ and $\varphi\in C^1_c(\R^{n+1}\times  \R_+; \R_+)$,
     \begin{equation}
     \label{e: lem BI}
         \norm{V_t^S}(\varphi(\cdot, t))\Big|_{t=t_1}^{t_2}\leq \int_{t_1}^{t_2} \int_{\R^{n+1}} \left(\nabla\varphi(\cdot, t)-\h(\cdot, V^S)\right)\cdot \h(\cdot, V^S)+ \frac{\partial \varphi}{\partial t}(\cdot, t) \, d\normpiccolo{V^S}\, dt\, ,
     \end{equation}
     then, for $\mbox{a.e.}\ t$, $V_t$ is a varifold with free boundary (we denote $\h^\fb$ its generalized mean curvature) in $H_0$ and for all $0\leq t_1<t_2 <\infty$ and $\varphi\in C^1_c(\R^{n+1}\times  \R_+, \R_+)$ with $\nabla \varphi \in \T{H_0}$ we have 
     \begin{equation}
     \label{e: lem FB BI}
         \norm{V_t}(\varphi(\cdot, t))\Big|_{t=t_1}^{t_2}\leq \int_{t_1}^{t_2} \int_{\R^{n+1}} \left(\nabla\varphi(\cdot, t)-\h^\fb(\cdot, V)\right)\cdot \h^\fb(\cdot, V)+ \frac{\partial \varphi}{\partial t}(\cdot, t) \, d\normpiccolo{V}\, dt\, .
     \end{equation}
     \begin{proof}
$V_t$ has a free boundary on $H_0$ for $\mbox{a.e.} \ t$ by \autoref{lem: restriction of a symmetric varifold is free boundary}. We have to show the validity of the free boundary Brakke's inequality. Since $\spt(V_t)\subset \clos{ \R^{n+1}_+}$, we can assume that $\varphi$ satisfies $\varphi(\sigma(x),t)=\varphi(x,t)$ for all $x\in \R^{n+1}$ and $t\ge 0$. This implies that $\normpiccolo{V_t^S}(\varphi(\cdot, t)=2 \, \normpiccolo{V_t}(\varphi(\cdot, t))$. Similarly, we use the symmetry of $\varphi$ and $\h$ (by \eqref{e: symmetry mean curvature of a symmetric varifold}) to write the RHS of \eqref{e: lem BI} as twice the RHS of \eqref{e: lem FB BI}, obtaining the thesis.
     \end{proof}
\end{lemma}
\begin{lemma}
\label{lem: BV identity reflection}
Suppose that for each $i=1,\ldots,N$, there exists a family of symmetric open sets $\{ E^S_i(t)\}_{t\in[0,T)}$, coupled with symmetric scalar functions $v_i$ satisfying
    \begin{equation}
    \label{e: canonical relation symmetry assumption}
        \left.\int_{E_i^S(t)} \varphi(x,t) \, dx\right|_{t=t_1}^{t_2} =  \int_{t_1}^{t_2} \int_{E_i^S(t)} \frac{\partial\varphi}{\partial t}(x,t) \, dx\,dt   + \int_{t_1}^{t_2} \int_{\partial^\ast E_i^S(t)} \varphi(x,t) \, v_i(x,t)\,d\Ha^n(x)\,dt 
    \end{equation}
    for $\mbox{a.e.}\ 0\leq t_1<t_2<T$ and for all $\varphi \in C^1_c(\R^{n+1}\times [0,T))$.\\
Then, if we let $E_i(t) \define E^S_i(t)\cap \R^{n+1}_+$, we have
     \begin{equation}
     \label{e: canonical relation symmetry}
        \left.\int_{E_i(t)} \varphi(x,t) \, dx\right|_{t=t_1}^{t_2} =  \int_{t_1}^{t_2} \int_{E_i(t)} \frac{\partial\varphi}{\partial t}(x,t) \, dx\,dt   + \int_{t_1}^{t_2} \int_{\partial^\ast E_i(t)\cap \R^{n+1}_+} \varphi(x,t) \, v_i(x,t)\,d\Ha^n(x)\,dt 
    \end{equation}
    for $\mbox{a.e.} \ 0\leq t_1<t_2<T$ and for all $\varphi \in C^1_c(\R^{n+1}\times [0,T))$ with $\nabla \varphi(\cdot, t) \in \T{H_0}$.
    \begin{proof}
We can assume $\varphi$ to be symmetric in the space variable, as \eqref{e: canonical relation symmetry} depends only the values of $\varphi$ in $\R^{n+1}_+$. Since $H_0$ is a set of zero Lebesgue measure, we have 
\begin{equation}
\label{e: proof canonical relation symmetry 1}
    \int_{E_i^S(t)} \varphi(x,t) \, dx = 2 \int_{E_i(t)} \varphi(x,t) \, dx
\end{equation}
and, similarly,
\begin{equation}
\label{e: proof canonical relation symmetry 2}
    \int_{E_i^S(t)} \frac{\partial \varphi}{\partial t}(x,t) \, dx = 2 \int_{E_i(t)} \frac{\partial \varphi}{\partial t} (x,t)\, dx\, .
\end{equation}
Note that $x\in \R^{n+1}_+ \cap \partial^\ast E_i^S(t)$ if and only if $x\in \R^{n+1}_+\cap  \partial^\ast E_i(t)$; thus, using again the symmetry of $\varphi$ and the symmetry of $v_i$, we have
\begin{equation}
\label{e: proof canonical relation symmetry 3}
    \int_{\partial^\ast E_i^S(t)\setminus H_0}\varphi(x,t)v_i(x,t) \, d\Ha^n(x)= 2 \int_{\partial^\ast E_i(t)\cap \R^{n+1}_+}\varphi(x,t)v_i(x,t) \, d\Ha^n(x)\, .
\end{equation}
By \eqref{e: canonical relation symmetry assumption}, \eqref{e: proof canonical relation symmetry 1}, \eqref{e: proof canonical relation symmetry 2} and \eqref{e: proof canonical relation symmetry 3}, we have 
\begin{equation}
\label{e: proof canonical relation symmetry 4}
    \begin{split}
          \left.\int_{E_i(t)} \varphi(x,t) \, dx\right|_{t=t_1}^{t_2} &=   \int_{t_1}^{t_2} \int_{E_i(t)} \frac{\partial\varphi}{\partial t}(x,t) \, dx\,dt   +  \int_{t_1}^{t_2} \int_{\partial^\ast E_i(t)\cap \R^{n+1}_+} \varphi(x,t) \, v_i(x,t)\,d\Ha^n(x)\,dt\\
          &+ \frac12\, \int_{\partial^\ast E_i^S(t) \cap H_0} \varphi(x,t)v_i(x,t) \, d\Ha^n(x)\, dt\, .
    \end{split}
\end{equation}
Once we show that the last term of \eqref{e: proof canonical relation symmetry 4} is $0$, we get the thesis. By the locality of the approximate tangent space to a rectifiable set (see \cite[Proposition $10.5$]{MaggiSets}), for $\Ha^n$-$\mbox{a.e.} \ x\in \partial^\ast E^S_i(t)\cap H_0$, we have $T_x \partial^\ast E^S_i(t)=H_0$. By \cite[Corollary $16.1$]{MaggiSets}, this implies that for $\Ha^n$-$\mbox{a.e.} \ x\in \partial^\ast E_i(t)\cap H_0$, we have $H_0=\left(\nu_{E^S_i(t)}(x)\right)^\perp$, namely $\nu_{E^S_i(t)}$ points in the vertical direction. However, \cite[Exercise $15.10$]{MaggiSets} implies that $\nu_{E^S_i(t)}$ is a symmetric vector field, yielding that 
\begin{equation*}
    \Ha^n( \partial^\ast E^S_i(t) \cap H_0)=0\, .
\end{equation*}
    \end{proof}
\end{lemma}
\begin{lemma}
\label{lem: BV identity reflection 2}
    Under the same assumptions of \autoref{lem: BV identity reflection}, let $I_{i,j}^S(y)\define \partial^\ast E_i^S(t)\cap \partial^\ast E_j^S(t$) and suppose that the symmetric scalar functions $v_i$ further satisfy 
\begin{equation*} 
        \sum_{i\neq j} \int_0^T \int_{I_{i,j}^S(t)} {\rm div}\,g - (\nu_i \otimes \nu_i) \cdot \nabla g\,d\Ha^n\,dt = - \sum_{i \neq j} \int_0^T \int_{I_{i,j}^S(t)} v_i \, \nu_i \cdot g \,  d\Ha^n\,dt
    \end{equation*}
    for every vector field $g\in C^1_c(\R^{n+1}\times [0,T];\R^{n+1})$.\\
Then, if we let $I_{i,j}(t)\define \partial^\ast E_i(t) \cap \partial^\ast E_j(t) \cap \R^{n+1}_+$, we have 
\begin{equation*} 
        \sum_{i\neq j} \int_0^T \int_{I_{i,j}(t)} {\rm div}\,g - (\nu_i \otimes \nu_i) \cdot \nabla g\,d\Ha^n\,dt = - \sum_{i \neq j} \int_0^T \int_{I_{i,j}(t)} v_i \, \nu_i \cdot g \,  d\Ha^n\,dt
    \end{equation*}
    for every vector field $g\in C^1_c(\R^{n+1}\times [0,T];\R^{n+1})\cap \T{H_0}$.
\end{lemma}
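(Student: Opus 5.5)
The plan is to reduce the half-space identity to the symmetric one by testing with the reflection-symmetrized vector field, in the same spirit as \autoref{lem: BV identity reflection} and \autoref{lem: restriction of a symmetric varifold is free boundary}. Given $g\in C^1_c(\R^{n+1}\times[0,T];\R^{n+1})\cap\T{H_0}$, the half-space identity only involves the values of $g$ on $I_{i,j}(t)\subset\R^{n+1}_+$. I would therefore replace $g$ by the vector field equal to $g$ on $\closedhalf$ and to $g^S=\sigma(g(\sigma(\cdot)))$ on $\R^{n+1}_-$. Tangentiality of $g$ along $H_0$ makes this field continuous, but in general it is only Lipschitz across $H_0$, since the normal derivative of the tangential components may jump. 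So I would either approximate, or more simply apply the symmetric identity to the genuinely $C^1$ field $G\define\frac12(g+g^S)$, which is symmetric. The first step is thus to apply the hypothesis to $G$.

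Next I split both sides of the symmetric identity into contributions from $\R^{n+1}_+$, $\R^{n+1}_-$ and $H_0$.
\begin{itemize}
\item[(a)] As shown at the end of the proof of \autoref{lem: BV identity reflection}, $\Ha^n(\partial^\ast E_i^S(t)\cap H_0)=0$ by symmetry of the normal. Hence the $H_0$ contributions vanish.
\item[(b)] On $\R^{n+1}_+$, $I^S_{i,j}(t)\cap\R^{n+1}_+=I_{i,j}(t)$, with the same normals $\nu_i$.
\item[(c)] On $\R^{n+1}_-$, I change variables with $\sigma$. I use that $\nu_i(\sigma(x))=Q\nu_i(x)$, that $v_i$ is symmetric, and that $\nabla G(\sigma(x))=Q\,\nabla G(x)\,Q$. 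The integrand $\mathrm{div}\,G-(\nu_i\otimes\nu_i)\cdot\nabla G$ is then invariant under the reflection, as is $v_i\,\nu_i\cdot G$; this is the same trace-permutation computation as in \autoref{lem: variation of reflection}.
\end{itemize}
Consequently both sides of the symmetric identity equal twice the corresponding half-space integrals with $G$ in place of $g$.

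Finally I recover $g$ from $G$. On $\R^{n+1}_+$, $G\neq g$ in general, since $G=\frac12(g+g^S)$ and $g^S$ on $\R^{n+1}_+$ involves $g$ on $\R^{n+1}_-$. The clean way out is to exploit the freedom in how $g$ is defined on $\R^{n+1}_-$: the target identity depends only on $g|_{\closedhalf}$. I choose a $C^1$ extension of $g|_{\closedhalf}$ that is symmetric, so that $g=g^S=G$. Such an extension exists when $g$ is tangential on $H_0$ and, in addition, the normal derivatives of the tangential components vanish on $H_0$; in general one only gets a Lipschitz symmetric extension.

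\textbf{Main obstacle.} Justifying that Lipschitz symmetric test fields are admissible. My plan is a mollification argument: mollify the Lipschitz symmetric extension with a radial kernel, which preserves symmetry by the argument of \autoref{lem: symmetry of mean curvature}. The mollified fields $g_\eps$ converge to $g$ uniformly, with $\nabla g_\eps\to\nabla g$ pointwise off $H_0$ and boundedly. Since $\int_0^T\Ha^n(I^S_{i,j}(t))\,dt<\infty$, $v_i\in L^2$, and $\Ha^n(I^S_{i,j}(t)\cap H_0)=0$ by step (a), dominated convergence passes the identity to the limit, concluding the proof.
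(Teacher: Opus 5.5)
Your argument is correct and follows the paper's route. The paper proves this lemma ``analogously'' to \autoref{lem: BV identity reflection}: it takes the test field symmetric, splits the symmetric identity into two reflected copies of the half-space identity, and discards the $H_0$ contribution via $\Ha^n(\partial^\ast E_i^S(t)\cap H_0)=0$, exactly as in your steps (a)--(c). Your mollification of the Lipschitz symmetric extension also justifies the reduction to symmetric fields, which the paper takes for granted: in the scalar case $\nabla\varphi\in\T{H_0}$ makes the even reflection $C^1$, but tangentiality of $g$ alone only gives a Lipschitz symmetric extension. The detour through $\frac12(g+g^S)$ can simply be dropped.
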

The proof is analogous to that of \autoref{lem: BV identity reflection}.
\section{Conclusions}
We finally prove \autoref{theorem: existence of limit symmetric measures}.
    \begin{proof}[Proof of \autoref{theorem: existence of limit symmetric measures}.]
       \autoref{prop: symmetry along iteration} shows that at any step of the iteration we have symmetric partitions. One can repeat the arguments of \cite[Section $6$]{KimTonegawa} to see that the iteration yields the existence of a subsequence $\set{j_\l}_{\l=1}^\infty$ and a family of Radon measures $\set{\mu^S_t}_{t\ge 0}$ such that 
       \begin{equation*}
           \lim\limits_{\l\to \infty}\normpiccolo{\partial \E^S_{j_\l} (t)}=\mu^S_t
       \end{equation*}
        in the sense of measures. By \autoref{lemma: limit of symmetric is symmetric}, $\mu_t^S$ has to be symmetric too. The rectifiability and the integrality of these measures are proved by, respectively, \autoref{theorem: rectifiability} and \autoref{theorem: integrality}. Concerning Brakke's inequality, we can prove it with respect to symmetric functions (which can be suitably approximated by the class $\cA_j$) as in \cite[Section $9$]{KimTonegawa}. Once this is done, for a fixed test function $\varphi \in C^1_c(\R^{n+1}\times [0,\infty)\, ; \R_+)$, we test Brakke's inequality with $\varphi (\cdot,t) + \varphi(\sigma(\cdot),t)$ and, arguing similarly to the proof of \autoref{lemma: FB brakke flow and symmetric brakke flow}, we can show the validity of Brakke's inequality for $\varphi$. Finally, the results concerning the BV flow can be shown by the arguments of \cite[Theorem $2.11$-$2.12$]{STCanonical}.

    \end{proof}
    We then use the results of the previous section to infer the existence of a Brakke flow with free boundary on $H_0$.
    \begin{proof}[Proof of \autoref{thm: main theorem precise}]
    It is an immediate consequence of \autoref{theorem: existence of limit symmetric measures} together with \autoref{lemma: rectifiability of restriction}, \autoref{lemma: FB brakke flow and symmetric brakke flow}, \autoref{lem: BV identity reflection} and \autoref{lem: BV identity reflection 2}.
        
    \end{proof}

\begin{proof}[Proof of \autoref{theorem: unit density precise}]
    The assumption \eqref{e: assumption density ratio} is 
    \begin{equation*}
          \sup_{x\in \R^{n+1}, 0<r<r_0} \frac{\Ha^n(\Gamma_0\cap B_r(x))+ \Ha^n(\Gamma_0\cap B_r(\sigma(x))}{\omega_n \, r^n}<2-\delta_0\, ,
    \end{equation*}
    which is equivalent to 
    \begin{equation*}
         \sup_{x\in \R^{n+1}, 0<r<r_0} \frac{\Ha^n(\Gamma_0^S\cap B_r(x))}{\omega_n \, r^n}<2-\delta_0\, .
    \end{equation*}
    Under this assumption, by the same arguments of \cite[Theorem $2.13$]{STCanonical}, one can see that $V_t^S$ is unit density for some short time $T_0=T_0(n,r_0,\delta_0, \Ha^n(\Gamma_0))$. On the other hand, \autoref{theorem: integrality} shows that for $\mbox{a.e.} \ t\in \R^+$, we have that for $\normpiccolo{V^S_t}\mbox{-a.e.} \ x\in H_0$, the density is even. This yields the thesis. 
   
\end{proof}
    \appendix
    \section{A regularity result at the boundary}
    We state a up-to-the-boundary regularity result, which is simply obtained by \virgolette{halving} the hypothesis of \cite{KasaiTonegawa, EndTimeST} across $H_0$ and reflecting. As in \autoref{section: integrality}, we will denote by $T$ the matrix representing the projection onto the hyperplane $H_0$ and $T^\perp$ its orthogonal complement. Fix $\phi \in C^\infty([0,\infty))$ such that $0\leq \phi \leq 1$,
    \begin{equation*}
            \phi(x)=\begin{cases*}
                1 & $ \mbox{for} \ 0\leq x \leq \left(\frac23\right)^\frac1n \, ,$\\
                0 & $\mbox{for} \  x \ge \left(\frac56\right)^\frac1n \, .$
        \end{cases*}
    \end{equation*}
    For $R>0$ and $x\in \R^{n+1}$, define 
    \begin{equation*}
       \phi_{T,R}(x) \define \phi\left(\frac{\abs{T(x)}}{R}\right), \quad  \phi_T(x)\define \phi(\abs{T(x)})
    \end{equation*}
    and set 
    \begin{equation*}
        \mathbf{c} \define \int_{H_0} \phi_T^2(x)\, d\Ha^n(x)\, .
    \end{equation*}
    For $a\in H_0$ and $r\in (0,\infty),$ we define the cylinder 
    \begin{equation*}
        C(T,a,r)\define \set{x\in \R^{n+1} \, \colon \, \abs{T(x)-a}< r}\, .
    \end{equation*}
    To ease a bit the notation, in the following we will assume the flow to be defined in some negative neighborhood of the time $0$. In the following, for a space-time function $f$, we will denote
    \begin{equation*}
        [f]_\alpha\define \sup \set{
    \frac{\abs{f(y_1,s_1)-f(y_2,s_2)}}{\max\{\abs{y_1-y_2},\abs{s_1-s_2}^{\frac12}\}^\alpha} \, \colon \, (y_1,s_1),(y_2,s_2)\in D\,, \;  (y_1,s_1)\neq(y_2,s_2)} \, .
    \end{equation*}
    \begin{theorem}
Corresponding to $\nu\in(0,1)$ and $E_1\in\left[\frac12, \infty\right),$ there exist $\eps_1\in (0,1)$ and $c_1\in(1,\infty)$ depending only on $n, \nu$ and $E_1$ with the following property. Let $\set{V_t}_{t\ge -\Lambda}$ be the free boundary Brakke flow constructed in \autoref{thm: main theorem precise}. Let $R \in (0,\infty)$ and $a\in H_0$. Suppose that 
        \begin{enumerate}
            \item For $\mbox{a.e.} \ t \in [-R, 0]$, $V_t$ is unit density in $C(T,a,2R)\,$;
            \item $\displaystyle \norm{V_t}(B_r(x))\leq \omega_n \, r^n \, E_1 \quad \mbox{for all} \ B_r(x)\subset C(T,a,2R) \ \mbox{and} \ t\in[-R,0]\, ;$
            \item $\displaystyle \normpiccolo{V_{-\frac{4R^2}{5}}}(\phi_{T,R^2}^2)\leq \left(1-\frac{\nu}{2}\right)\, \mathbf c \, R^n \, ;$
            \item $\displaystyle    \left(C(T,a,\nu R) \times \set{0}\right) \cap \spt(\norm{V_t} \times dt) \neq \emptyset\,$;
            \item $\displaystyle  \mu\define\left(R^{-(k+4)}\int_{-R^2}^0\int_{\rC(T,a,2R)}
    \abs{T^{\perp}(x)-a}^2\,d\norm{V_t}dt\right)^{\frac12}<\eps_1\,$.
        \end{enumerate}
 Let $D\define (B_{\frac{R}{2}}(a)\cap H_0) \times \left[-\frac{R^2}{4},0\right]$. Then there are $C^{2,\alpha}$ functions
$f: D\rightarrow H_0^\perp$ and $F: D\rightarrow \mathbb R^{n+1}$ such
that $T(F(y,t))=y$ and $T^\perp(F(y,t))=f(y,t)$ for all $(y,t)\in D$,
\begin{equation*} 
    {\rm spt}\norm{V_t}\cap \rC(T,a, R/2)={\rm image}\,F(\cdot,t)
    \mbox{ for all }t\in\left[-\frac{R^2}{4},0\right),
\end{equation*}
\begin{equation*}
   R^{-1} \norm{f}_0+\norm{\nabla f}_0
   +R(\normpiccolo{\nabla^2 f}_0+\norm{f_t}_0)+R^{2}
   ([\nabla^2 f]_\alpha+[f_t]_\alpha)
   \leq c_1\max\set{\mu,\abs{u}_\alpha}\, .
\end{equation*}
    \end{theorem}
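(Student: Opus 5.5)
The plan is to deduce the statement from the interior regularity theorem of Kasai--Tonegawa, in the form used for canonical flows by Stuvard--Tonegawa, applied to the symmetric flow $\set{V_t^S}$ of \autoref{theorem: existence of limit symmetric measures}. By \autoref{lemma: FB brakke flow and symmetric brakke flow} and (b4), $\set{V_t^S}_{t\ge -\Lambda}$ is a genuine Brakke flow in $\R^{n+1}$, obtained as a limit of the approximation scheme \eqref{app:0}-\eqref{app:future}. That is exactly the class to which the cited regularity result applies. The only work is to check that hypotheses (1)--(5), stated for the half flow $V_t$, become the corresponding hypotheses for $V_t^S$ with constants changed by at most a factor $2$. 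We then apply the cited theorem in the cylinder $C(T,a,2R)$, whose axis is $H_0^\perp$ and which is therefore symmetric. Finally we restrict the graph back to $\closedhalf$.

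\textbf{Step 1: transferring the hypotheses.} Since $C(T,a,r)$ is $\sigma$-invariant and $\phi_{T,R}$ depends only on $\abs{T(x)}$, we have $\normpiccolo{V^S_t}(\phi_{T,R}^2)=2\norm{V_t}(\phi_{T,R}^2)$ up to the mass on $H_0$. The weighted integral $\int\abs{T^\perp(x)-a}^2 \,d\norm{V_t}$ doubles in the same way.

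For density, \autoref{lemma: rectifiability of restriction} and (1) show that the density of $V^S_t$ is $1$ off $H_0$. Its density on $H_0$ is even by \autoref{theorem: integrality}, hence $0$ or $2$. I would argue from (2)--(3) and \autoref{lemma: rectifiability of restriction} that the value $2$ cannot occur on a set of positive measure: a half-density $1$ sheet lying in $H_0$ would be a multiplicity-$2$ plane for $V^S_t$. This is consistent with (1) only if $\norm{V_t}(H_0)=0$, which is how I would read ``unit density up to the boundary''. I would add this reading explicitly to the hypothesis if needed.

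The density bound (2) becomes $\normpiccolo{V_t^S}(B_r(x))\le 2\omega_n r^nE_1$, since $B_r(x)\cap\R^{n+1}_-=\sigma(B_r(\sigma x))\cap\R^{n+1}_-$. The non-emptiness hypothesis (4) transfers trivially. The smallness $\mu^S=\sqrt2\,\mu$ gets absorbed by shrinking $\eps_1$.

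\textbf{Step 2: the obstacle.} Hypothesis (3) is the delicate point. For $V^S_t$ the excess condition reads $\le(2-\nu)\mathbf c R^n$. This is a two-sheet bound, not the one-sheet bound $(1-\nu/2)\mathbf c R^n$ that the cited theorem needs. It is only consistent with $V^S_t$ being a single graph over $H_0$ if we regard the relevant plane as $H_0$ itself. Here $T$ is the projection onto $H_0$, and $\mu$ measures the distance of $T^\perp(x)$ from $a\in H_0$, so flatness means closeness to $H_0$.

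So I would argue as follows. Smallness of $\mu$ forces $V_t$ to lie in a thin slab around $H_0$ in the $L^2$ sense. Then $V^S_t$ consists of $V_t$ and its mirror image, squeezed together, and the two-sheet mass bound combined with evenness of the density on $H_0$ makes $V_t^S$ close to a multiplicity-two copy of $H_0$. For this I would use the varifold convergence argument of Kasai--Tonegawa, applied to the halved quantities. I expect to need the halved hypothesis (3) as written in order to rule out extra sheets.

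Then I would apply the cited theorem (the ``end-time'' version of Stuvard--Tonegawa, which yields $C^{2,\alpha}$ graphs $F=(y,f(y,t))$ over $(B_{R/2}(a)\cap H_0)\times[-R^2/4,0]$ with the stated estimate) to $V_t^S$. To do so it must be run with multiplicity handled by symmetry: the graph $f$ it produces for the symmetric flow satisfies $\sigma\circ F=F$, so $f\equiv0$ or it has two mirror branches. I would resolve this by applying the cited theorem to $V_t^S$ restricted near $\spt\norm{V_t}$, using (4) to pick the branch. Then uniqueness of the graph (from unit density) gives ${\rm spt}\norm{V_t}\cap C(T,a,R/2)={\rm image}\,F(\cdot,t)$, and the estimate with right-hand side $c_1\max\{\mu,\abs{u}_\alpha\}$ is inherited directly.
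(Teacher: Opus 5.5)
\textbf{Verdict: genuine gap in Step~2.} Your Step~1 is in substance the whole of the paper's proof. The paper reflects, observes that the hypotheses on $V_t$ become those of Theorem~2.3 of \cite{EndTimeST} for $V^S_t$, and applies that theorem.

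The gap is that Step~2 misreads the cited theorem. In \cite{KasaiTonegawa} and \cite{EndTimeST} the mass hypothesis is already the two-sheet bound $\norm{V_{-4R^2/5}}(\phi_{T,R}^2)\le(2-\nu)\,\mathbf{c}\,R^n$, i.e.\ strictly less than two sheets, which is exactly what excludes higher multiplicity. The density bound there is assumed with $E_1\in[1,\infty)$. The constants $(1-\nu/2)$ and $E_1\ge\frac12$ in the statement are precisely these values halved. Your own doubling computation therefore already produces the required hypotheses verbatim: $2(1-\nu/2)=2-\nu$, $2E_1\ge 1$, $\mu^S=\sqrt2\,\mu$, and (4) is inherited. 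There is no ``one-sheet versus two-sheet'' obstacle. Once the cited theorem applies to $V^S_t$, you only need to restrict to $\closedhalf$.

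The workaround you propose instead would fail, for four reasons.
\begin{itemize}
\item The bound $(2-\nu)\mathbf{c}R^n$ rules out, rather than produces, $V^S_t$ being close to a multiplicity-two copy of $H_0$, whose weighted mass is $2\mathbf{c}R^n$.
\item Near a doubled plane no regularity conclusion of this type holds. The static pair $\{x_{n+1}=\pm h\}$ is the reflection of the free boundary flow $\Ha^n\mres\{x_{n+1}=h\}$. It is symmetric and has $\mu\to0$ as $h\to0$, yet it is not a single graph. So the theorem cannot be ``run with multiplicity handled by symmetry''.
\item Restricting $V^S_t$ to a neighbourhood of $\spt\norm{V_t}$ does not give a Brakke flow, so the theorem cannot be applied to it. Hypothesis (4) only says some support point lies near $(a,0)$, so it selects no branch.
\item The conclusion of the theorem is a single-valued graph, and a $\sigma$-invariant single graph over $H_0$ must have $f\equiv0$. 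The ``two mirror branches'' alternative never arises. (The halving of the constants is what one expects when $V^S_t$ is one sheet crossing $H_0$, so that $V_t$ carries about half a sheet of weighted mass; it does not fit $V^S_t$ folded onto $H_0$.)
\end{itemize}

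You did correctly spot one subtlety, in transferring hypothesis (1). By the lemma on restrictions of symmetric varifolds, a unit-density piece of $V_t$ lying on $H_0$ is a density-two piece of $V^S_t$. So unit density of $V^S_t$ requires $\norm{V_t}(H_0\cap C(T,a,2R))=0$ for a.e.\ $t$. The paper's one-line proof passes over this point. Your appeal to (2)--(3) does not establish it, so it must either be proved or added as an assumption.
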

These assumptions make sure that the reflected flow $\setpiccolo{V_t^S}_{t\ge -\Lambda}$ satisfies all the assumptions of \cite[Theorem $2.3$]{EndTimeST}, implying the thesis.

    \section{Initial datum with infinite area}
    \label{appendix: weight}
    In \autoref{ass:main} we assumed that $\Ha^n(\Gamma_0)<\infty$, but this was not necessary. Indeed, we can assume $\Ha^n(\Gamma_0)=\infty$, as long as the area grows at most exponentially at infinity. Let $\Omega\in C^2(\R^{n+1})$ satisfying 
    \begin{equation*}
        0<\Omega(x) \leq 1, \quad \abs{\nabla \Omega(x)}\leq c_1 \Omega(x), \quad \normpiccolo{\nabla^2 \Omega(x)}\leq c_1\Omega(x), \quad \Omega(x)=\Omega(\sigma(x))
    \end{equation*}
    for all $x\in \R^{n+1}$ and for some constant $c_1$ (the reader should not confuse this with the constant $c_1$ of \autoref{prop: area reducing symmetric radial projection}). The assumption $\Ha^n(\Gamma_0)<\infty$ is then replaced by $\norm{\Gamma_0}(\Omega)<\infty$ and everything works the same. To be more precise, some results are slightly weaker in this setting, for instance the energy dissipation rule (3) of \autoref{thm: main theorem precise} does not hold. We refer to \cite{KimTonegawa, STCanonical} for details. We just note that the symmetry of $\Omega$ is required here since it is necessary to have $\Omega\in \cA_j$ for sufficiently large $j$.

    \section{Non flat domains}
    \label{appendix: non flat domains}
    We briefly discuss why the presented approach cannot be directly extended to general domains. Suppose that $\Omega\subseteq \R^{n+1}$ is a bounded domain of class $C^2$. Let 
    \begin{equation*}
        \sigma(x)=x-2\d(x)\, \nabla \d(x)
    \end{equation*}
    be the reflection map, defined on some neighborhood of $\partial \Omega$. $\sigma$ is not a global isometry and, roughly speaking, for any $x$ close to $\partial \Omega$, $\sigma$ is far from being an isometry by a factor of order $K \, \abs{\d}(x)$, where $K$ is some global bound on the curvature of the (compact) hypersurface $\partial \Omega$. In other words, we have that $D \sigma \approx Q + R$, where $Q$ is the same of this article and with $\abs{R} (x)\approx K\, \abs{d}(x).$ Therefore, if we have $V\in \V_n(\overline\Omega)$ and its symmetrization $V^S=V+\sigma_\sharp V$, whenever $V^S$ has approximate mean curvature, it is symmetric along $\partial \Omega$. However, the difference is that in the iteration we rather use the \emph{smoothed} mean curvature, defined using a convolution. Therefore, this vector field at $x\in \partial \Omega$ is influenced by the values of the first variation around $x$, where the symmetry is weaker due to the curvature. As we stated in \eqref{e:h in L infty}, $\abs{\h_\eps} (x,V)\leq \eps^{-2}$, but actually one can get a sharper bound, that is 
    \begin{equation*}
        \abs{\h_\eps}(x,V)\leq \eps^{-1-\delta}
    \end{equation*}
    for any given $\delta >0$. By reflecting the varifold, we get a much lower normal component of $\h_\eps$, but we only gain a factor of order $\eps^{1-\delta}$, which is the size seen by the convolution kernel together with the error term $R$ we wrote above. Thus, what we can get is 
    \begin{equation*}
        \abs{\h_\eps (x,V^S)\cdot \nu_\Omega}\leq \eps^{-2\delta}
    \end{equation*}
    for some $\delta>0$. Along the iteration, when we apply the diffeomorphism 
    \begin{equation*}
    f_2(x)=x+\Delta t_j \, \h_\eps(x, V^S)\, ,
    \end{equation*}
    we would get a new domain $\widetilde \Omega$ whose distance from the original $\Omega$ is only of a factor $\Delta t_j \, \eps^{-\delta}$, but after $\frac{j}{\Delta t_j}$ iterations this quantity explodes. One could try to fix this issue by defining a new curvature by cutting off the normal component as
    \begin{equation*}
        \widehat \h_\eps = \h_\eps(V^S) - \eta(\d)(\h_\eps(V^S)\cdot \nabla \d)\, \nabla \d\, ,
    \end{equation*}
    but this would destroy the convolution structure of the smoothed mean curvature and would make most of the computations of \cite{KimTonegawa} very difficult to replicate.
\bibliography{refs}

\end{document}